
 \documentclass[reqno, 11pt]{amsart}
\usepackage[percent]{overpic}
\usepackage{calc,graphicx,amsfonts,amsthm,amscd,amsmath,amssymb,enumerate,dsfont,mathrsfs,paralist,bbm,tikz}
\usepackage{subfig}
\usepackage{pdfsync}
\usepackage{stmaryrd}
\usepackage[numeric,initials,nobysame]{amsrefs}
\usepackage{marginnote}

\usepackage[colorlinks=true, pdfstartview=FitV, linkcolor=blue, citecolor=blue, urlcolor=blue,pagebackref=false]{hyperref}

\usepackage{booktabs}
\setlength{\oddsidemargin}{6mm}
\setlength{\evensidemargin}{6mm}
\setlength{\textwidth}{145mm}


\newcommand{\ie}{\hbox{\it i.e.\ }}


\reversemarginpar
\newlength\fullwidth
\setlength\fullwidth{\textwidth+2\marginparsep}

\numberwithin{equation}{section}

\DeclareMathSymbol{\leqslant}{\mathalpha}{AMSa}{"36} 
\DeclareMathSymbol{\geqslant}{\mathalpha}{AMSa}{"3E} 
\DeclareMathSymbol{\eset}{\mathalpha}{AMSb}{"3F}     
\renewcommand{\leq}{\;\leqslant\;}                   
\renewcommand{\geq}{\;\geqslant\;}                   
\newcommand{\sumtwo}[2]{\sum_{\substack{#1 \\ #2}}} 


\def\1{\ifmmode {1\hskip -3pt \rm{I}} \else {\hbox {$1\hskip -3pt \rm{I}$}}\fi}

\newcommand{\var}{\operatorname{Var}}


\newcommand{\id}{\mathbbm{1}}

\newcommand{\trel}{T_{\rm rel}}

\newcommand{\D}{\Delta}

\renewcommand{\l}{\lambda}
\renewcommand{\L}{\Lambda}

\renewcommand{\l}{\lambda}
\renewcommand{\a}{\alpha}
\renewcommand{\d}{\delta}
\renewcommand{\t}{\tau}

\newcommand{\g}{\gamma}
\newcommand{\G}{\Gamma}

\newcommand{\e}{\varepsilon}

\renewcommand{\o}{\omega}
\renewcommand{\O}{\Omega}

\newcommand{\tc}{\thinspace |\thinspace}


\newtheorem{theorem}{Theorem}[section]
\newtheorem{lemma}[theorem]{Lemma}
\newtheorem{proposition}[theorem]{Proposition}
\newtheorem{corollary}[theorem]{Corollary}
\newtheorem{remark}[theorem]{Remark}

\newtheorem{claim}[theorem]{Claim}
\newtheorem{definition}[theorem]{Definition}
\newtheorem{maintheorem}{Theorem}

\newtheorem*{question*}{Question}

\newtheorem*{remark*}{Remark}
\newtheorem*{idefinition*}{Definition}
\newtheorem{example}{Example}



\newcommand{\cA}{\ensuremath{\mathcal A}}
\newcommand{\cB}{\ensuremath{\mathcal B}}
\newcommand{\cC}{\ensuremath{\mathcal C}}
\newcommand{\cD}{\ensuremath{\mathcal D}}

\newcommand{\cL}{\ensuremath{\mathcal L}}

\newcommand{\cP}{\ensuremath{\mathcal P}}

\newcommand{\cU}{\ensuremath{\mathcal U}}


\newcommand{\bbE}{{\ensuremath{\mathbb E}} }

\newcommand{\bbN}{{\ensuremath{\mathbb N}} }

\newcommand{\bbP}{{\ensuremath{\mathbb P}} }

\newcommand{\bbR}{{\ensuremath{\mathbb R}} }

\newcommand{\bbZ}{{\ensuremath{\mathbb Z}} }

\newcommand{\ex}{{Ext}_x}


%
\let\a=\alpha    \let\d=\delta  \let\e=\varepsilon
 \let\g=\gamma       \let\l=\lambda
      \let\o=\omega      
  \let\s=\sigma \let\t=\tau   
  
\let\D=\Delta   \let\G=\Gamma  \let\L=\Lambda 
\let\O=\Omega      

%

%

%



\renewcommand{\le}{\leq}

\begin{document}
\title[]{Towards a universality picture for the relaxation to
  equilibrium of kinetically constrained models} 
\author[F. Martinelli]{F. Martinelli}
\email{martin@mat.uniroma3.it}
\address{Dipartimento di Matematica e Fisica, Universit\`a Roma Tre, Largo S.L. Murialdo 00146, Roma, Italy}
\author[C. Toninelli]{C. Toninelli}
\email{cristina.toninelli@upmc.fr}
\address{Laboratoire de Probabilit\'es et Mod\`eles Al\`eatoires
  CNRS-UMR 7599 Universit\'es Paris VI-VII 4, Place Jussieu F-75252 Paris Cedex 05 France}
\thanks{This work has been supported by the ERC Starting Grant 680275 MALIG
 }
 \begin{abstract}
Recent years have seen a great deal of progress in our understanding
of bootstrap percolation models, a particular class of \emph{ monotone
  cellular automata}. In the two-dimensional lattice $\bbZ^2$ there is now a quite satisfactory understanding of their evolution starting from a random initial condition, with a strikingly beautiful universality picture for their critical behavior. 
Much less is known for their non-monotone stochastic counterpart, namely \emph{kinetically constrained models} (KCM). In KCM each vertex is resampled (independently) at rate one by tossing a $p$-coin iff it can be infected in the next step by the bootstrap model. In particular an infection can also heal, hence the non-monotonicity.
Besides the connection with bootstrap percolation, KCM have an
interest in their own as they feature some of the most striking features of the liquid/glass transition, a major and still largely open problem in condensed matter physics.
In this paper  we pave the way towards proving universality results for the characteristic time scales of KCM. Our novel and general approach gives the right tools to establish a close connection between the critical scaling of characteristic time scales for KCM and the scaling of the critical length in critical bootstrap models. When applied to the Fredrickson-Andersen k-facilitated models in dimension $d\ge 2$, amongst the most studied KCM, and to the Gravner-Griffeath model, our results are close to optimal.
\end{abstract}
\maketitle

\section{Introduction}
In recent years remarkable progress has been obtained in understanding the behaviour of a  particular class of monotone cellular automata known as \emph{bootstrap percolation}. A general bootstrap cellular automaton \cite{BSU} is specified by its \emph{update family} $\mathcal U=\{U_1,\dots,U_m\}$ of finite subsets of $\mathbb Z^d\setminus 0$. Once $\cU$ is given, the  $\mathcal U$-bootstrap percolation process is as follows.
Given a set $A \subset \mathbb Z^d$ of initially infected vertices, set $A_0 = A$, and
define recursively for each $t \in\mathbb N$
\begin{equation}
  \label{eq:2}
A_{t+1}=A_t \cup \{x\in \mathbb Z^d:\ x+U_k\subset A_t\ \text{ for some }
k\in (1,\dots m)\}.
\end{equation}
In other words a vertex $x$ becomes infected at time $t+1$ if the
translate by $x$ of at least one element of the update family is
already entirely infected at time $t$, and infected vertices remain
infected forever. We write $[A]_\cU := \bigcup_{t \ge 0} A_t$ for the
\emph{closure} of $A$ under the $\cU$-bootstrap process.

 A much studied example is the classical
$r$-neighbour model (see \cite{BBD-CM} and references therein) in which a vertex gets infected if \emph{at least} $r$ among its nearest neighbours are infected, namely the update family is formed by all the $r$-subsets of the set of the nearest neighbours of the origin.

A central  problem for bootstrap models is their long-time evolution
when the initial infected set $A_0$ is $\bbP_q(\cdot)$-random, \ie each vertex of
$\mathbb Z^d$, independently from the other vertices, is initially
declared to be infected with probability $q\in (0,1)$. A key quantity
is then the \emph{critical percolation threshold}  
\[
q_c(\mathcal U):=\inf\{q: \bbP_q([A]=\bbZ^d)=1\}. 
\] 
Two closely related quantities are $T_c(q;\cU)$ and $L_c(q;\cU)$
defined as follows.  
\begin{definition}
\label{def:scale critiche}Let $\tau_{BP}=\min\{t:\ 0\in [A]_t\}$ be the infection time of
the origin. Then
\[
T_c(q;\cU)=\inf\{t\ge 0:\ \bbP_q(\tau_{BP}\ge t)\le 1/2\}. 
\] 
To define $L_c(q;\cU)$, let us consider the bootstrap percolation
process on the $d$-dimensional
torus $\bbZ^d_n\subset \bbZ^d$ of linear size $n$, and let 
$q_c(n;\mathcal U)$ be the smallest $q$ such that with probability at
least $1/2$ the whole torus is 
eventually infected. Then 
\begin{equation}
  \label{eq:3}
L_c(q,\mathcal U):=\min\{n:q_c(n,\mathcal U)\le q\}.
\end{equation}
\end{definition}
In \cites{BSU,BBPS,BD-CMS} beautiful universality results for general
$\mathcal U$-bootstrap percolation processes in two dimensions
satisfying $q_c(\cU)=0$ have been established,
yielding in particular the sharp scaling behaviour of  $T_c(q;\cU), L_c(q;\cU)$ as $q\to 0$. For a nice review of these
results we refer the reader to \cite{Robsurvey}*{Section 1}. It
follows in particular \cite{BD-CMS}*{Theorems 1.4, 1.5} that in two dimensions
\[
0< \liminf_{q\to 0}\frac{\log(T_c(q;\cU))}{\log(L_c(q;\cU))}\le \limsup_{q\to
  0}\frac{\log(T_c(q;\cU))}{\log(L_c(q;\cU))}<+\infty,
\] 
and in this sense one can say that $T_c(q;\cU)$ and $ L_c(q;\cU)$ have the
same scaling behavior.

A quite natural stochastic counterpart of bootstrap percolation models are particular interacting particle systems known as \emph{kinetically constrained models} (KCM). 
Given a $\mathcal U$-bootstrap model, the associated KCM is the continuous-time reversible Markov process on $\O=\{0,1\}^{\mathbb Z^d}$ constructed as follows. 
Denote by
$\o\in \O$ the current configuration of the process and call a vertex
$x$ \emph{infected} if $\o_x=0$. Then each
vertex $x$, with rate one and independently across $\bbZ^d$, is
resampled by tossing a $p$-coin ($\text{Prob}(1)=p$)  iff the
translate by $x$ of at least one element of the update family $\cU$ is
already entirely infected for $\o$. In other words the state $\o_x$ of
the vertex $x$ is allowed to be resampled iff it was infectable in $\o$
by the
bootstrap process \cite{CMRT}. 

It is easy to check that such a process is
reversible w.r.t. the Bernoulli($p$) product measure $\mu$ on $\O$.
Notice that if $q:=1-p\ll 1$, it is very unlikely for a vertex to
become infected (even if it would have been infected by the bootstrap
process). Observe moreover that infected vertices may heal. The latter feature implies, in particular, that the KCM is not \emph{monotone/attractive}, a fact that rules out several powerful tools from interacting particle systems theory like monotone coupling and censoring.

Besides the connection with cellular automata, KCM are of interest in their own. 
They have been in fact introduced in the physics literature in the '80's to model the liquid/glass transition, a major and still largely open problem in condensed matter physics \cite{Berthier-Biroli}.
Extensive numerical simulations indicate that KCM display a remarkable  \emph{glassy} behavior, including heterogeneous dynamics, the occurrence of ergodicity breaking transitions, multiple invariant measures and  anomalously long-time scales  (see for example \cite{GarrahanSollichToninelli} and references therein).

It has been proved in \cite{CMRT} that the KCM undergoes an ergodicity breaking
transition at $q_c(\cU)$
and a major problem, both from  the physical and mathematical point of view, is to determine the precise divergence of its characteristic time scales 
when $q\downarrow q_c$. 
A natural time scale  is the mean hitting time $\bbE_\mu(\t_0)$, where $\bbE_\mu(\cdot)$ is the average w.r.t. to the law of the stationary KCM process with initial law $\mu$ and 
\[
\tau_0=\inf\{t\ge 0:\ \o_0(t)=0\}.
\] 
For all those KCM whose update family $\cU$ satisfies $q_c(\cU)=0$, one can then ask whether the scaling of $\bbE_\mu(\t_0)$ as
$q\downarrow 0$ is related to that of $T_c(q;\cU),L_c(q;\cU)$. It is
possible to prove (see Lemma \ref{LEM:PARIS}) that there exists
$\d=\d(\cU)\in (0,1)$ such that, for all $q$ small enough,
\begin{equation}
  \label{eq:rev1}
\bbE_\mu(\t_0)\ge \d \bbE_q(\t_{BP})\ge \frac{\d}{2} T_c(q;\cU).
\end{equation}
So far, the best \emph{general upper bound} on $\bbE_\mu(\tau_0)$ is  a much poorer one of the form \cite{CMRT}
\[
\bbE_\mu(\tau_0)\leq e^{O(L_c(q;\cU)^d)}.
\] 
Although this bound has been greatly improved for special choices of the update family $\cU$, yielding in some cases the correct behavior (cf.  \cites{CFM,Tree1,CFM2}), for general KCM and contrary to the situation of bootstrap percolation in two dimensions, there is yet no universality picture for the scaling of  $\bbE_\mu(\tau_0)$.

The present paper represents the first step of a general project concerning KCM with update family $\cU$ satisfying $q_c(\cU)=0,$ with the aim of establishing universality results on the scaling 
of $\bbE_\mu(\tau_0)$ as $q\to 0$ analogous to those proved within bootstrap percolation. 

At the beginning of this program, in \cite{Robsurvey}*{Section 2} some
conjectures were put forward, jointly with us, on the scaling of
$\bbE_\mu(\t_0)$. In particular, it was suggested that for KCM it is
necessary to introduce a more refined classification of the
universality classes in order to take into account the effect of the
possible presence of \emph{energy barriers} in the dynamics. By
energy barriers we mean very unlikely states with an anomalous amount
of infection which are typically visited by the stationary KCM process
before infecting the origin. More specifically, it was argued that
energy barriers could induce a very different scaling of
$\bbE_\mu(\t_0)$ w.r.t. to that of $T_c(q;\cU)$ for all those models
for which the characteristic \emph{bootstrap percolation critical
  droplets} are constrained to move inside a cone. Examples include
the two-dimensional East \cite{CFM} with $\cU$ consisting of the
$1$-subsets of $\cup_{i=1}^d\{-\vec e_i\}$ and Duarte-KCM model
\cites{Duarte,Mountford,BCMS-Duarte} where $\cU=2\text{-subsets of}
\{\vec -e_1,\pm \vec e_2\}$. Significant progresses in this direction
have been obtained after this work has been completed \cite{MMT2}. 

The main purpose of this paper is twofold. Firstly, we envisage
a general and novel approach to prove Poincar\'e inequalities for KCM,
with the ultimate goal of finding the exact scaling of $\mathbb
E_\mu(\tau_0)$ for a
very large class of update families $\cU$ with $q_c(\cU)=0$. 
For example, building upon the strategy and techniques developed in Sections \ref{CPoincare} and
\ref{application}, the following result has been recently established.
\begin{maintheorem}[\cite{MMT}]For the so-called \emph{critical $\alpha$-unrooted} KCM in two dimensions
(see \cite{Robsurvey} for the appropriate definition), 
\begin{equation}
  \label{eq:mainres}
\bbE_{\mu}(\tau_0)=O(L_c(q;\cU)^{\beta(q)}),\quad \beta(q)=\text{poly}(\log\log L_c(q;\cU))\quad \text{as }q\to 0.
\end{equation}
\end{maintheorem}
Secondly we want to greatly improve the existing upper bounds on
$\bbE_{\mu}(\tau_0)$ for the most studied KCM on $\bbZ^d$, in any
dimension $d\ge 2$, namely the
Fredrickson-Andersen $k$-facilitated model (FA-kf) \cite{FH}. Its update family
consists of the $k$-sets of the neighbors of the origin, and therefore
its associated bootstrap percolation version is the well known $k$-neighbour
model. We also test the flexibility of our techniques by briefly
analysing the
kinetically constrained version of the well known Gravner-Griffeath
bootstrap percolation model on $\bbZ^2$ \cites{GG2,GG}. In this case $\mathcal U$ consists
of the $3$-subsets of the set formed by the neighbours of the origin together with the vertices $(\pm
2,0)$ and it is known that the bootstrap process features a striking
anisotropy. In both  cases our main result (see Theorem
\ref{thm:main1}) establishes a tight connection between $\bbE_\mu(\t_0)$ and $L_c(q;\cU)$.

\subsection{Main results and plan of the paper}
In section \ref{CPoincare}, after introducing the relevant notation and motivated by the connection between $\bbE_\mu(\t_0)$ and the Poincar\'e inequality, we prove our first main result (Theorem  \ref{thm:1}). It establishes a (constrained) Poincar\'e inequality for very general KCM satisfying a rather flexible condition involving the range of the update family $\cU$ and the probability that an update is feasible. Constrained Poincar\'e inequalities for KCM, implying a positive spectral gap and exponential mixing, have already been established \cite{CMRT}, mainly using the so-called halving method. Here, inspired by our previous analysis of KCM on trees \cites{Tree2,Tree1}, we develop an alternative method which, besides being more natural and direct, applies as well to update families with a large (depending on $q$) or infinite number of elements. As an example, in section \ref{supercritical} we prove a Poincar\'e inequality for the KCM for which
the constraint requires that the oriented neighbours of the to-be-updated vertex belong to an infinite cluster of infected vertices. 

Section \ref{application}, and its main outcomes summarised in Corollary \ref{rhs 9bis}, is somehow the core of the work.  By applying Theorem \ref{thm:1} together with a renormalisation argument and canonical-paths arguments, we prove a sharp bound on the best constant in the Poincar\'e inequality for general KCM. This bound involves the probability of occurrence of a \emph{critical droplet} (in the bootstrap percolation language) together with certain \emph{congestion constants} related to the cost of moving around the droplet.   
In this section we made an effort to keep the framework as general as possible, in order to construct a very flexible tool that can be applied to any choice of constraints in any dimensions.

In section \ref{sec:models} we introduce the Fredrickson-Andersen
$k$-facilitated (FA-kf) and the Gravner-Griffeath kinetically constrained
(GG-KCM) models and state our main result Theorem \ref{thm:main1} for the scaling of $\bbE(\t_0(\bbZ^d;\cU))$ in these cases. 
Finally in section \ref{proofmainthm} we prove Theorem \ref{thm:main1} by bounding (model by model) the congestion constants appearing in the key  inequality 
of Corollary \ref{rhs 9bis}.

\section{A constrained Poincar\'e inequality for product measures}
\label{CPoincare}
\subsubsection{Notation}
For any integer $n$ we will write $[n]$ for the set
$\{1,2,\dots,n\}$. Given $x=(x_1,\dots,x_d)\in \bbZ^d$ we denote its $\ell^1$-norm by
$\|x\|_1=\sum_{i=1}^d|x_i|$ and by $d_1(\cdot,\cdot)$ the associated
distance function. Given two vertices $x\neq y$ we will say that $x$
precedes $y$ and we will write $x\prec y$ if $x_i\le y_i$ for all
$i\in [d]$.
The collection $\cB=\{\vec e_1, \vec e_2 , \dots ,\vec e_d\}$ will
denote the canonical basis of
$\bbZ^d$. Given a set $\L \subset \bbZ^d$ we define its
\emph{external boundary} as the set
$$ 
\partial \L=\{ y \in \bbZ^d \setminus \L\,:\, \exists \, x\in \L
\text{ with } \|x-y\|_1=1\}\,.
$$
\subsubsection{The probability space}
Given a finite set $S$ and $\L\subseteq \bbZ^d$, we will denote by $\O_\L$ the product space
$S^\L$ endowed with the product topology. Given $V\subset \L$
and $\o\in \O_\L$ we will write $\o_{V}$ for the
restriction of $\o$ to $V$. Finally we will denote by $\mu_\L$ the
product measure $\mu_\L=\otimes_{x\in
  \L}\ \hat \mu_x$ on $\O_\L$ where, $\forall
x\in \bbZ^d$, we set $\hat\mu_x= \hat \mu$ with $\hat \mu$ a
probability measure on $S$ which w.l.o.g. we assume to be positive. Expectation and variance w.r.t. $\mu_\Lambda$ are denoted
by $\bbE_\L(\cdot),\ \var_\L(\cdot)$ respectively. If $\L = \bbZ^d$
the subscript $\L$ will be dropped from the notation. 

In several applications the probability space $\left(S,\hat \mu\right) $ will be the ``particle space''
$S=\{0,1\}^V$ where $V$ is a finite subset (a ``block'' as it is sometimes
called) of $\bbZ^d$ and $\hat \mu=\otimes_{x\in
  V} B(p)$, $B(p)$ being the $p$-Bernoulli measure. 

\subsubsection{The constraints} \label{sec:constraints}For each $x\in \bbZ^d$ let
$\D_x\subset \bbZ^d\setminus \{x\}$ be a finite set, let $\cA_x$ be
an event depending on the variables $\{\o_y\}_{y\in \D_x}$ and let
$c_x$ be its indicator
function. By construction $c_x$ does not depend on $\o_x$. In the
sequel we will refer to $c_x$ as the
\emph{constraint at $x$} and to $\e_x:=\mu(1-c_x)=\mu(\cA_x^c)$ and
$\D_x$ as its \emph{failure probability} and \emph{support} respectively. 
In our approach based on a martingale decomposition of the variance
$\var(f)$ of any local function $f:\O\mapsto \bbR$, a key role is
played by constraints satisfying the following \emph{exterior condition}.
\begin{definition}[Exterior condition]
\label{ext-cond}Given an
exhausting  collection of
subsets  $\{V_n\}_{n\in \bbZ}$ of $\bbZ^d$ 
(\ie $V_n\subset V_{n+1}$ for all $n$ and $\cup_nV_n=\bbZ^d$),
let $\cL_n:=V_{n}\setminus V_{n-1}$ be the $n^{th}$-shell and, for any $x\in
\cL_n,$ let the
\emph{exterior} of $x$ be the set $\ex:=\cup^\infty_{j=n+1}\cL_j$.
We then say that the family of constraints $\{c_x\}_{x\in \bbZ^d}$
satisfies the exterior condition w.r.t. $\{V_n\}_{n=-\infty}^\infty$ if 
$\D_x\subset \ex$ for all $x$. We will say that $\{c_x\}_{x\in \bbZ^d}$
satisfies the exterior condition if there exists a family of sets
$\{V_n\}_{n\in \bbZ}$ as above such that $\{c_x\}_{x\in \bbZ^d}$
satisfies the exterior condition w.r.t. $\{V_n\}_{n\in \bbZ}$.  
\end{definition}
\begin{example}
\label{example:1}A concrete example of a class of constraints satisfying the exterior condition and
entering in the applications to
kinetically constrained models is as
follows. Fix a
vertex $z\succ 0$ and let $\cL_0=\{x\in \bbZ^d:\ \langle
x,z\rangle=0\},$ where $\langle\cdot,\cdot\rangle$ is the usual scalar
product and $x,z$ are treated as vectors in $\bbR^d$. For $j\in \bbZ$ let
$\cL_j=\cL_0+j\d\vec z$ where $\d=\sup\{\d'>0:\ (\cL_0+\d'\vec z)\cap
\bbZ^d=\emptyset\}$ (cf. Figure \ref{fig:exterior}) and let $V_n=
\cup_{j=-\infty}^n\cL_j$. The above construction defines the exhausting  collection of
subsets  $\{V_n\}_{n\in \bbZ}$. 

Let now $G\subset S$ be an single-site
event and let $\cU=(U_1,\dots,U_m)$ be a finite family of
finite subsets of 
the half-space $\{x\in \bbZ^d:\ \langle x,z\rangle >0\}=\cup_{j=1}^\infty \cL_j$. Then
we define $c_0(\o)$ as the indicator of the event that there exists $U\in \cU$
such that $\o_x\in G$ for all $x\in
U$. The
constraint $c_x$ at any other vertex $x$ is obtained by translating the
above construction by $x$. For example in $d=2$ one could take $S=\{0,1\}$, $G=\{0\}$, 
$z=(1,1)$, $m=1$ and $U=\{(0,1),(1,0)\}$, a case known as the
North-East model (cf. e.g. \cite{CMRT}). 

\end{example}
\begin{figure}
\begin{tikzpicture}[scale=0.8,>=latex]
[x=1cm, y=1cm]
\begin{scope}
\draw (0,2) grid [step=1] (9,9);
\draw [ultra thick,->] (5,5) -- (6,7);
\draw [thick,dashed] (0,7.5) -- (9,3);
\draw [thick,dashed] (0,8) -- (9,3.5);
\draw [thick,dashed] (0,8.5) -- (9,4);
\draw [thick,dashed] (0,9) -- (9,9-9/2);
\filldraw[black]  (5,5) circle (2pt);
\filldraw[black]  (4,6) circle (4pt);
\filldraw[black]  (5,6) circle (4pt);
\filldraw[black]  (6,6) circle (4pt);
\filldraw[black]  (6,5) circle (4pt);
\node at (4.7,4.7) {\large $\textbf 0$};
\node at (6.2,7.3) {\large $\textbf z$};
\node at (9.5,2.8) {\large $\mathbf \cL_0$};
\node at (9.5,4) {\large $\mathbf \cL_2$};
\node at (9.5,3.3) {\large $\mathbf \cL_1$};
\node at (9.5,4.7) {\large $\mathbf \cL_3$};
\end{scope}
\end{tikzpicture}
  \caption{An example in two dimensions of a constraint satisfying the exterior
    condition w.r.t. a sequence of increasing half-spaces. Only the
    shells $\{\cL_n\}_{n=0}^3$ are drawn. The
    constraint $c_0$ requires that the restriction of the
    configuration $\o$ to each one of the four vertices around the origin
    (black dots) belongs to a certain subset $G\subset S$.}
  \label{fig:exterior}
  \end{figure}
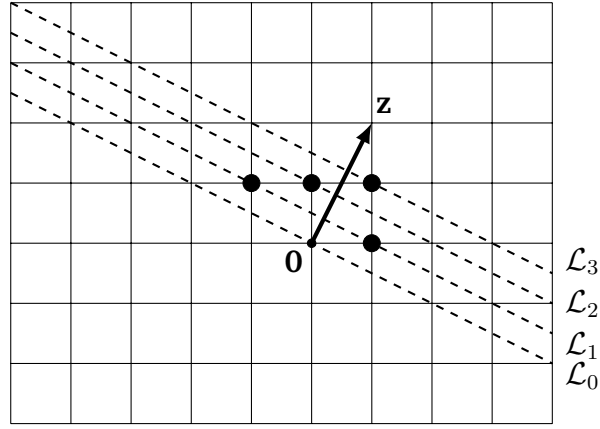
\subsection{Poincar\'e inequality}
\label{sec:main_result}
For simplicity we state our main result directly for 
the infinite lattice $\bbZ^d$. There is also a
finite-volume version in a box $\L\subset \bbZ^d$ which is proved
exactly in the same way. 
In the sequel, given a set $V\subset \bbZ^d$, we will write $\var(f)$ and $\var_V(f)$ for the
variances of a function $f\in L^2(\O,\mu)$ 
w.r.t. to $\mu$ and to $\mu(\cdot \tc \{\o_y\}_{y\notin V})$ respectively.

Let $\{c_x^{(i)}\}_{x\in
  \bbZ^d}$, $i=1,\dots,k,$ be a family of constraints with supports
$\D_x^{(i)}$ and failure probabilities $\e_x^{(i)}$. For any non-empty
$I\subset [k]$ let $\l_I\in (0,+\infty)$ be a positive weight, let $\e^{(I)}_x = \mu(\prod_{i\in I}(1-c^{(i)}_x))$
and let $\D_x^{(I)}=\cup_{i\in I}\D_x^{(i)}$. 
\begin{maintheorem}
\label{thm:1}  Assume that there exists a choice of
$\{\l_I\}_{I\subset [k]}$ such that
\begin{equation}
  \label{eq:key_condition}
\Bigl(\sumtwo{I\subset [k]}{I\neq \emptyset}\l_I\Bigr) \sup_z
\sumtwo{I\subset [k]}{I\neq \emptyset}\ \sumtwo{x\in \bbZ^d}{x\cup \D^{(I)}_x\ni z}\l_I^{-1}\e^{(I)}_x<1/4.
\end{equation}
Suppose in addition that there exists an
exhausting family $\{V_n\}_{n=-\infty}^\infty$ of sets of $\bbZ^d$ such that, for any $i\in
[k],$ the 
constraints $\{c^{(i)}_x\}_{x\in \bbZ^d}$ satisfy the exterior
condition w.r.t. $\{V_n\}_{n=-\infty}^\infty$. Then, for any local (\ie depending on finitely many variables) function $f:\O\mapsto \bbR$,  
 \begin{gather}
\label{CP}
\var(f)\le 4 \sum_x \mu\Bigl(\bigl[\ \prod_{i=1}^kc^{(i)}_x\ \bigr] \var_x(f)\Bigr).
  \end{gather}
\end{maintheorem}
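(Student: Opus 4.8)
The plan is to obtain \eqref{CP} from a martingale (telescoping) decomposition of the variance along the exhausting family $\{V_n\}$, exploiting the exterior condition to control conditional variances, and then to bootstrap a crude bound into the sharp constant $4$ by a self-improvement argument driven by the smallness hypothesis \eqref{eq:key_condition}.

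\textbf{Step 1: martingale decomposition.} For a local $f$, order the shells $\cL_n=V_n\setminus V_{n-1}$ and write $\var(f)=\sum_n \mu\bigl(\var\bigl(\mu(f\mid \o_{\ex_n^-})\mid \o_{\cL_n},\o_{\ex_n^-}\bigr)\bigr)$ — the standard decomposition of the variance of a product measure into a sum over shells of the "variance added by revealing $\cL_n$". Concretely, letting $\cF_n$ be the $\sigma$-algebra generated by $\{\o_y: y\in \cup_{j\ge n}\cL_j\}$ and $f_n=\mu(f\mid \cF_n)$, one has $\var(f)=\sum_n \mu\bigl((f_{n}-f_{n+1})^2\bigr)=\sum_n\mu\bigl(\var(f_{n+1}\mid \cF_{n+1})\bigr)$ wait — more precisely $f_n - f_{n+1}$ is the increment and $\sum_n \mu((f_n-f_{n+1})^2)=\var(f)$ by orthogonality of martingale increments. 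Within each shell one further decomposes site by site, so that ultimately $\var(f)\le \sum_x \mu(\var_x(f_{>x}))$ where $f_{>x}=\mu(f\mid \o_{\ex})$ depends only on coordinates in the exterior of $x$ together with coordinates in $\cL_n$ strictly "after" $x$ in a fixed site-ordering of the shell. The point of the exterior condition $\D_x^{(i)}\subset \ex$ is precisely that each constraint $c_x^{(i)}$ is measurable with respect to this exterior data, hence behaves like a constant when we take the variance $\var_x$.

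\textbf{Step 2: inserting the constraints.} The difficulty is that the naive decomposition gives $\var(f)\le\sum_x\mu(\var_x(f_{>x}))$, with \emph{no} constraint $\prod_i c_x^{(i)}$ in front. To insert it, write $\var_x(f_{>x})$ as a covariance $\mu_x\otimes\mu_x\bigl((f_{>x}(\o)-f_{>x}(\o'))^2\bigr)/2$ and split according to whether every constraint $c_x^{(i)}$ holds or at least one fails. On the event $\prod_i c_x^{(i)}=1$ one keeps the term as $\mu(\prod_i c_x^{(i)}\var_x(f_{>x}))$, and since $f_{>x}$ differs from $f$ only by further conditioning, a routine convexity/tower argument lets one replace $f_{>x}$ by $f$ at the cost of the factor appearing in \eqref{CP}. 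On the complementary event, $1-\prod_i c_x^{(i)}\le \sum_{\emptyset\ne I\subset[k]}\prod_{i\in I}(1-c_x^{(i)})$ after a suitable inclusion–exclusion-type bound, and one estimates $\mu\bigl((1-\prod_ic_x^{(i)})\var_x(f_{>x})\bigr)$ by pulling the (exterior-measurable) indicators out, using Cauchy–Schwarz and the weights $\l_I$ to redistribute the cost: the $\l_I$-factor on one side and $\l_I^{-1}\e_x^{(I)}$ on the other, with the $\sup_z$ over $z$ accounting for how many constraints' supports can simultaneously contain a given vertex. This is where the precise left-hand side of \eqref{eq:key_condition} is engineered.

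\textbf{Step 3: closing the loop.} Summing Step 2 over all $x$ yields an inequality of the schematic form $\var(f)\le A\sum_x\mu(\prod_i c_x^{(i)}\var_x(f))+B\cdot\var(f)$, where $B$ is exactly the product on the left of \eqref{eq:key_condition} (up to the numerical factor), hence $B<1/4<1$, and $A$ is a constant coming from the conditioning step. Rearranging gives $\var(f)\le \frac{A}{1-B}\sum_x\mu(\prod_i c_x^{(i)}\var_x(f))$; tracking the constants carefully (the $2$ in \eqref{eq:key_condition} is there to absorb the Cauchy–Schwarz loss, and $1-B>3/4$) one checks $A/(1-B)\le 4$, giving \eqref{CP}. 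For the rearrangement to be legitimate one needs $\var(f)<\infty$, which holds since $f$ is local; a standard truncation/exhaustion argument (working first in finite volume $\L$, where everything is a finite sum, then letting $\L\uparrow\bbZ^d$) makes this rigorous and is the reason the statement is phrased for local $f$.

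\textbf{Main obstacle.} The crux is Step 2: getting the constraint $\prod_i c_x^{(i)}$ to materialize in front of $\var_x$ while only paying the self-bounded error term controlled by \eqref{eq:key_condition}. The exterior condition is what makes this possible — it guarantees $c_x^{(i)}$ is frozen when we take $\var_x$, so the indicators can be moved freely in and out of $\mu_x$ and the error term genuinely involves only $\e_x^{(I)}=\mu(\prod_{i\in I}(1-c_x^{(i)}))$ times exterior quantities that telescope back into $\var(f)$. Choosing the bookkeeping (the site-ordering within shells, the inclusion–exclusion bound for $1-\prod_i c_x^{(i)}$, and the weighted Cauchy–Schwarz split) so that the error is \emph{exactly} the expression in \eqref{eq:key_condition}, rather than something merely comparable, is the delicate part; everything else is convexity of the variance and orthogonality of martingale increments.
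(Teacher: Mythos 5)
Your Step 1 is the paper's auxiliary lemma (martingale decomposition over the shells of the exhausting family, yielding $\var(f)\le \sum_x\mu\bigl(\var_x(\mu_{Ext_x}(f))\bigr)$), and your overall picture of what $\eqref{eq:key_condition}$ should control is right. But Step 2 contains a gap that I do not think can be repaired without changing the mechanism.

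You split $\var_x(f_{>x})=\prod_i c_x^{(i)}\,\var_x(f_{>x})+\bigl(1-\prod_i c_x^{(i)}\bigr)\var_x(f_{>x})$ and then, for the first piece, propose to "replace $f_{>x}$ by $f$ by convexity/tower." The problem is that, precisely because of the exterior condition, $\prod_i c_x^{(i)}$ is measurable w.r.t.\ $\{\o_y:y\in Ext_x\}$ while $\var_x(f_{>x})=\var_x(\mu_{Ext_x}(f))$ is measurable w.r.t.\ $\{\o_y:y\notin Ext_x\cup\{x\}\}$; under the product measure these are independent, so $\mu\bigl(\prod_i c_x^{(i)}\,\var_x(f_{>x})\bigr)=(1-\e_x^{[k]})\,\mu\bigl(\var_x(f_{>x})\bigr)$. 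Pushing this through convexity gives $\mu(\prod_i c_x^{(i)})\cdot\mu(\var_x(f))$, which is \emph{not} the coupled quantity $\mu\bigl(\prod_i c_x^{(i)}\var_x(f)\bigr)$ appearing in $\eqref{CP}$ — and in general there is no inequality between the two. The paper avoids this by inserting the constraint \emph{inside} $\mu_{Ext_x}$ before taking $\var_x$: it writes $\mu_{Ext_x}(f)=\mu_{Ext_x}(c_xf)+\mu_{Ext_x}\bigl((1-c_x)f\bigr)$, applies $(a+b)^2\le 2a^2+2b^2$, and then uses convexity on $\var_x\bigl(\mu_{Ext_x}(c_xf)\bigr)\le\mu_{Ext_x}\bigl(c_x\var_x(f)\bigr)$; here the $c_x$ and the $x$-dependence of $f$ travel together through $\mu_{Ext_x}$, which is exactly what produces the constrained Dirichlet form after taking $\mu$. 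This is where the factor $2$ enters (and the second $2$ in $\eqref{eq:key_condition}$ accounts for the other piece). A related smaller issue: for $k>1$ the paper needs the \emph{exact} inclusion–exclusion identity $1-\prod_ic_x^{(i)}=\sum_{\emptyset\ne I}(-1)^{1+|I|}\prod_{i\in I}(1-c_x^{(i)})$ rather than the one-sided bound you quote, because the decomposition is fed into $\var_x\bigl(\mu_{Ext_x}(\cdot\,f)\bigr)$ followed by weighted Cauchy–Schwarz, and variance is not monotone in the integrand.

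Step 3 also has a sign problem. After all the estimates, the error term returns as a multiple of the intermediate quantity $T:=\sum_z\mu\bigl(\var_z(\mu_{Ext_z}(f))\bigr)$, not of $\var(f)$. Since the shell decomposition gives only $\var(f)\le T$ (with equality only when shells are singletons), writing the schematic inequality as $\var(f)\le A\,\cD(f)+B\,\var(f)$ is not derivable: you would need $T\le\var(f)$, which goes the wrong way. The correct absorption is on $T$ itself: $T\le 2\,\cD(f)+\tfrac12 T$, hence $T\le 4\,\cD(f)$, and then $\var(f)\le T$. So the self-improvement idea is right, but the variable being bootstrapped must be $T$, not $\var(f)$.
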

\begin{remark}
\label{rem:PI-gap}
The r.h.s. of \eqref{CP} is the Dirichlet form of a special KCM on
$\bbZ^d$ with constraints $c_x=\prod_{i=1}^kc^{(i)}_x$ (see Section
\ref{sec:models}). Thus \eqref{CP} says that the relaxation time of the above
process (see Definition \ref{def:relax}) is smaller than $4$.      
\end{remark}
\begin{remark}\ 
\label{rem:constraint}
It is easy to construct examples of constraints for which the
exterior condition is violated and the
  r.h.s. of \eqref{CP} is zero for a suitable local function $f$. Take
  for instance $S=\{0,1\}$, $d=2$ and $c_x$ the indicator of the event that at least three
  nearest neighbours of $x$ are in the zero state.
  In this case  
there does not exist an exhausting family $\{V_n\}_{n=1}^\infty$ such
  that the constraints satisfy the exterior condition w.r.t. $\{V_n\}_{n=1}^\infty$.  Furthermore if we let 
 $f(\o)= \o_0\o_{\vec e_1}\o_{\vec e_1+\vec e_2}\o_{\vec e_2}$ then
  $c_x(\o)\var_x(f)=0$ for all $\o$ and all $x\in \bbZ^d$ while $\var(f)>0$. Therefore, for this choice of $c_x$, inequality \eqref{CP} does not hold for all local functions and the KCM with constraint $c_x$ has infinite relaxation time (see Remark \ref{rem:PI-gap}).
 We stress that, however, the fact that the constraints satisfy the exterior condition is not a necessary 
 condition in order for \eqref{CP} to hold. See the following remark for further explanations.
 \end{remark}
\begin{remark}
\label{rem:extension}
For certain applications the following monotonicity property turns out to be useful. Suppose
that $\{c_x^{(i)}\}_{x\in \bbZ^d, \,i\in [k]}$ satisfy the
condition of the theorem and let $\{\hat c_x^{(i)}\}_{x\in \bbZ^d, \,i\in [k]}$ be another
family of constraints which are dominated by the first ones in the sense that
$c_x^{(i)}\le \hat c_x^{(i)}$  for all $i,x$. Then clearly \eqref{CP}
holds for all local functions with $c_x^{(i)}$ replaced bt $\hat c_x^{(i)}$ even if the latter does not
satisfy the exterior condition. As an example take $S=\{0,1\}$, $k=1$ and $\hat c_x$
the constraint that at least two neighbours of $x$ are in the zero state (namely the constraint of FA-2f model, see Section 
\ref{sec:models})
and $c_x$ the same but restricted to the neighbours of the form $x+\vec
e_i$, $i\in [d]$. 
\end{remark}
\begin{proof}[Proof of Theorem \ref{thm:1}]
We first treat the case of a single constraint $k=1$. After that we
will explain how to generalize the argument to $k>1$ constraints. 
We begin with a simple result.
\begin{lemma}For any local function $f$ 
\begin{equation}
\label{AB} \var(f)\leq\sum_x
\mu\bigl(\var_x \bigl(\mu_{\ex}(f) \bigr) 
\bigr).
\end{equation}
\end{lemma}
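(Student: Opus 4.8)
The plan is to prove \eqref{AB} by the standard martingale (telescoping) decomposition of the variance along the exhausting family $\{V_n\}_{n=-\infty}^\infty$, combined with the sub-additivity (tensorization) of the variance under a product measure.

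First I would set $\cF_n:=\sigma(\o_y:y\in V_n)$ and let $g_n:=\mu(f\mid \cF_n)$; equivalently, since $\mu$ is a product measure, $g_n$ is obtained from $f$ by integrating out all coordinates outside $V_n$, i.e. $g_n=\mu_{\cup_{j>n}\cL_j}(f)$. Because $f$ is local its support $S_f$ is finite, meets only finitely many shells, and is contained in $V_N$ for $N$ large; hence $g_n=f$ for all large $n$, $g_n=\mu(f)$ for all sufficiently negative $n$ (using $\bigcap_n V_n=\emptyset$, so that the shells $\{\cL_n\}$ partition $\bbZ^d$), and each $g_n$ depends only on the finitely many coordinates in $S_f\cap V_n$. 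Writing the (finite) telescoping sum
\[
f-\mu(f)=\sum_{n\in\Z}\bigl(g_n-g_{n-1}\bigr)
\]
and using that the increments $g_n-g_{n-1}$ are pairwise orthogonal in $L^2(\mu)$ — indeed $g_n-g_{n-1}$ is $\cF_n$-measurable while $\mu(g_m-g_{m-1}\mid \cF_{m-1})=0$ for $m>n$ by the tower property — one gets
\[
\var(f)=\sum_{n\in\Z}\mu\bigl((g_n-g_{n-1})^2\bigr).
\]

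Next I would compute each increment using the product structure. Since $V_n=V_{n-1}\cup\cL_n$ with $V_{n-1}\cap\cL_n=\emptyset$, conditioning the $\cF_n$-measurable function $g_n$ on $\cF_{n-1}$ amounts to averaging out exactly the shell $\cL_n$, so $g_{n-1}=\mu_{\cL_n}(g_n)$ and therefore $\mu\bigl((g_n-g_{n-1})^2\bigr)=\mu\bigl(\var_{\cL_n}(g_n)\bigr)$. Now I apply the tensorization (Efron--Stein) inequality for the product measure $\mu_{\cL_n}=\otimes_{x\in\cL_n}\hat\mu_x$ — legitimate here because $g_n$ depends on only finitely many of these coordinates even when $\cL_n$ is infinite — to get $\var_{\cL_n}(g_n)\le\sum_{x\in\cL_n}\mu_{\cL_n}(\var_x(g_n))$, hence $\mu(\var_{\cL_n}(g_n))\le\sum_{x\in\cL_n}\mu(\var_x(g_n))$. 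Finally, for $x\in\cL_n$ one has $\ex=\cup_{j>n}\cL_j$ by definition, so $g_n=\mu_{\ex}(f)$ for each such $x$; substituting and regrouping $\sum_{n}\sum_{x\in\cL_n}=\sum_x$ yields exactly \eqref{AB}.

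There is no real obstacle here: the argument is the textbook variance decomposition, and the only points needing a line of care are (i) invoking locality of $f$ to reduce the telescoping sum and all the tensorizations to finitely many coordinates, so that the possibly infinite shells $\cL_n$ cause no convergence or infinite-product issue, and (ii) the elementary identity that under a product measure the conditional expectation given $\cF_{n-1}$ of an $\cF_n$-measurable function is the partial integral over $\cL_n$, which is what makes $g_{n-1}=\mu_{\cL_n}(g_n)$ and thus ties the $n$th increment to $\var_{\cL_n}$.
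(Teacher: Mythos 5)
Your proof is correct and follows essentially the same route as the paper: the paper peels off shells via the iterated conditional–variance identity $\var(f)=\mu(\var_{\L_0}(f))+\sum_j\mu(\var_{\L_{j+1}}(\mu_{\L_j}(f)))$ and then tensorizes each shell term, which is exactly your martingale decomposition $\var(f)=\sum_n\mu((g_n-g_{n-1})^2)=\sum_n\mu(\var_{\cL_n}(g_n))$ followed by Efron–Stein on $\cL_n$. The two presentations are equivalent, and you correctly identify $g_n=\mu_{\ex}(f)$ for $x\in\cL_n$, which is the key substitution.
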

\begin{proof}[Proof of the Lemma]
Let $\{V_i\}_{i=-\infty}^\infty$ be the exhausting family of sets
w.r.t. which all the constraints satisfy the exterior condition, let
$\cL_i=V_i\setminus V_{i-1}$ be the corresponding $i^{th}$-shell and
assume w.l.o.g. that the support of $f$ is contained in
$\cup_{i=0}^n \cL_i$. Let finally $\L_j=\cup_{i=n-j}^n \cL_i$, $j\le n$. 
Using the formula for conditional variance together with the
fact that $\mu$ is a product measure we get:
\begin{eqnarray*}
\var(f)&=&\mu\bigl(\var_{\L_0}(f)
\bigr) +\var \bigl(\mu_{\L_0}(f) \bigr)
\\
&=& \mu \bigl(\var_{\L_0}(f) \bigr) +\mu
\bigl(\var_{\L_1} \bigl[\mu_{\L_0}(f) \bigr] \bigr) +
\var \bigl(\mu_{\L_1} \bigl[\mu_{\L_0}(f) \bigr]
\bigr)
\\
&\vdots&
\\
&=&\mu \bigl(\var_{\L_0}[f] \bigr)+\sum
_{j=0}^{n-1}\mu \bigl(
\var_{\L_{j+1}} \bigl[\mu_{\L_{j}}(f) \bigr] \bigr).
\end{eqnarray*}
Recall now the standard inequality valid for any product probability
measure $\nu=\nu_1\otimes\nu_2$:
\[
\var_{\nu}(f)\le \nu(\var_{\nu_1}(f))+\nu(\var_{\nu_2}(f)).
\]
If we apply the inequality to $\var_{\L_{j+1}} \bigl[\mu_{\L_{j}}(f) \bigr]$ and observe that 
$\mu_{\L_j}(f)$ does not depend on the variables in $\L_j$, we get immediately 
\begin{align*}
\mu(\var_{\L_{j+1}} \bigl[\mu_{\L_{j}}(f) \bigr])&\le&
                                                       \sum_{x\in\L_{j+1}\setminus \L_j}
\mu\bigl(\var_x\bigl(\mu_{\L_j}(f) \bigr)\bigr)= \sum_{x\in\L_{j+1}\setminus \L_j}
\mu\bigl(\var_x\bigl(\mu_{\ex}(f) \bigr)\bigr).
\end{align*}
Analogously,
\[
\mu\bigl(\var_{\L_0}[f] \bigr)\le\sum
_{x\in\L_0}\mu \bigl(\var_x(f) \bigr)=\sum
_{x\in\L_0}\mu \bigl(\var_x(\mu_{\ex}(f)) \bigr),
\]
because $\mu_{\ex}(f)=f$ for any $x\in \L_0$.
The proof of the claim is complete.
\end{proof}
We can now prove the theorem for $k=1$ and the starting point is \eqref{AB}.
We begin by examining a generic term $\mu \bigl(\var_x(\mu_{\ex}(f)) \bigr)$ for which we write
\[
\mu_{\ex}(f)=\mu_{\ex}\bigl(c_xf
\bigr) + \mu_{\ex}\bigl(\bigl[1-c_x
\bigr]f\bigr),
\]
so that
\begin{equation}\label{D0}
\var_x \bigl( \mu_{\ex}(f) \bigr) \le2 \var_x
\bigl(\mu_{\ex} \bigl(c_xf \bigr)
\bigr) + 2 \var_x \bigl(\mu_{\ex} \bigl(\bigl[1-
c_x\bigr]f \bigr) \bigr).
\end{equation}
Since $c_x(\o)$ does not depend on $\o_x$, the convexity of the
variance implies that the first term in the above r.h.s. satisfies
\begin{equation*}
\var_x \bigl(\mu_{\ex} \bigl(c_xf
\bigr) \bigr)\le\mu_{\ex} \bigl(\var_x \bigl(
c_xf \bigr) \bigr)= \mu_{\ex} \bigl(c_x\var_x(f) \bigr).
\end{equation*}
We now turn to the analysis of the more complicated second term in the r.h.s. of \eqref{D0}.
\begin{align*}
\var_x \bigl(\mu_{\ex} \bigl(\bigl([1-c_x\bigr]f \bigr) \bigr)&=\var_x \bigl(
\mu_{\ex} \bigl(\bigl[1-c_x\bigr]
\bigl(f-\mu_{ \ex\cup\{x\}}(f)+\mu_{ \ex\cup\{x\}}(f) \bigr) \bigr)\bigr)
\nonumber\\
&=\var_x \bigl(\mu_{\ex} \bigl(\bigl[1-c_x\bigr]g \bigr) \bigr),
\end{align*}
where $g:=f-\mu_{ \ex \cup\{x\}}(f)$ and we used the fact that $\var_x\bigl(\mu_{\ex}([1-c_x]\mu_{\ex \cup\{x\}}(f))\bigr)=0$. 

Recall now that the constraint $c_x$ depends only on $\{\o_y\}_{y\in
  \D_x}$ with $\D_x\subset \ex$. Thus 
\[
\mu_{\ex} \bigl(\bigl[1-c_x\bigr]g \bigr)=
\mu_{\ex}\left([1-c_x]
  \mu_{\ex\setminus \D_x}(g)\right)
\]
and a  Schwarz-inequality
then gives:
\begin{align}
\var_x \bigl(\mu_{\ex}
  \bigl(\bigl[1-c_x\bigr]g \bigr) \bigr)
&\leq
\mu_x \left(
\bigl(\mu_{\ex} \bigl(\bigl(1-c_x\bigr)
\mu_{\ex\setminus\Delta_x }g \bigr) \bigr)^2 \right)
\nonumber\\
&\leq\e_x\mu_{\ex \cup \{x\}} \Bigl(\bigl[
\mu_{\ex\setminus\Delta_x
}(g) \bigr]^2 \Bigr).
\label{least} 
\end{align}
Next we note that
\begin{equation}
\label{last}
\mu_{\ex \cup \{x\}} \Bigl(\bigl[
\mu_{\ex\setminus\Delta_x
}(g) \bigr]^2 \Bigr)=\mu_{x\cup\Delta_x
} \bigl( \mu_{\ex\setminus\Delta_x
} (g)^2
\bigr)=\var_{x\cup\Delta_x }\left( \mu_{\ex\setminus\Delta_x
}(g )\right),
\end{equation}
where we used the fact that $\mu_{x\cup\Delta_x }\left(\mu_{\ex\setminus\Delta_x
}(g )\right)=\mu_{\ex \cup \{x\}}(g)=0$ by the
definition of $g$.
Then by using \eqref{AB}, (\ref{least}) and (\ref{last}) we get
\begin{eqnarray}
\var_x \bigl(\mu_{\ex} \bigl(\bigl[1-c_x\bigr]g \bigr) \bigr)&\leq&\e_x\sum
_{z\in x\cup\Delta_x}\mu_{x\cup
\Delta_x}\left(\var_z\left(
\mu_{Ext_z}\left[ \mu_{ \ex\setminus
\Delta_x }(g)\right]\right)\right)
\nonumber\\
&\leq&\e_x\sum_{z\in x\cup\Delta_x}
\mu_{\ex \cup\{x\}}\left(\var_z(
       \mu_{Ext_z}(g)\right)\nonumber\\
&=&\e_x\sum_{z\in x\cup\Delta_x}
\mu_{\ex \cup\{x\}}\left(\var_z(
       \mu_{Ext_z}(f)\right),
\label{miserve}
\end{eqnarray}
where we use the convexity of the variance to obtain the second inequality.

In conclusion,
\begin{eqnarray}\label{F}\quad
&&
\sum_{x} \mu \left(
\var_x \bigl(\mu_{\ex}(f) \bigr) \right)
\nonumber
\\
&&\le2 \sum_{x}\mu\bigl(c_x\var_x(f) \bigr) + 2\sum
_{x}\e_x\sum_{z\in x\cup\D_x}
\mu\left(\var_z \bigl(\mu_{Ext_z}(f) \bigr)
\right)
\\
&&\le 2\sum_{x}\mu\bigl(c_x\var_x(f) \bigr) + 2\left[\sup_z \sum_{x:\ x\cup \D_x\ni z}\e_x\right]\sum_{z}
\mu\left(\var_z \bigl(\mu_{Ext_z}(f) \bigr)
\right).\nonumber
\end{eqnarray}
If $\sup_z \sum_{x:\ x\cup \D_x\ni z}\e_x\le
1/4$ we get  
\[
\sum_{x} \mu \left(\var_x \bigl(\mu_{\ex}(f) \bigr)
\right)
\le 4 \sum_{x}\mu\bigl(c_x\var_x(f) \bigr).
\]  
We now turn to the general case $k>1$. Let $c_x=\prod_i c_x^{(i)}$ and
recall the definition of $\e_x^{(I)}$ and of $\D_x^{(I)}$ for any
non-empty $I\subset [k]$. Let also $d^{(I)}_x=
\prod_{i\in I}(1-c_x^{(i)})$ so that $\e_x^{(I)}=\mu(d^{(I)}_x)$.
Notice
that (inclusion/exclusion formula)
\[
1-c_x= \sumtwo{I\subset [k]}{I\neq \emptyset}
(-1)^{\text{Parity}(I)+1}d_x^{(I)}=
\sumtwo{I\subset [k]}{I\neq \emptyset}
(-1)^{1+\text{Parity}(I)}\sqrt{\l_I} d_x^{(I)}/\sqrt{\l_I}.
\]
Thus the delicate term $\var_x \bigl(\mu_{\ex} \bigl(\bigl([1-c_x\bigr]f
\bigr) \bigr)$ in \eqref{D0} can be bounded from above using the Schwartz
inequality by
\begin{gather*}
\bigl(\sumtwo{I\subset [k]}{I\neq \emptyset}\l_I\bigr)\  \sumtwo{I\subset [k]}{I\neq \emptyset}
\l_I^{-1} \var_x \bigl(\mu_{\ex} \bigl(d^{(I)}_x\, f
\bigr) \bigr). 
\end{gather*}
At this stage we apply the steps leading to \eqref{miserve} to each
term $\var_x \bigl(\mu_{\ex} \bigl(d^{(I)}_x\, f
\bigr) \bigr)$ to get
\[
\var_x \bigl(\mu_{\ex} \bigl(\bigl([1-c_x\bigr]f
\bigr) \bigr)\le \bigl(\sumtwo{I\subset [k]}{I\neq \emptyset}\l_I\bigr)\sumtwo{I\subset [k]}{I\neq \emptyset}
\l_I^{-1}\e^{(I)}_x\sum_{z\in x\cup\Delta^{(I)}_x}
\mu_{\ex \cup\{x\}}\left(\var_z( \mu_{Ext_z}(f)\right).
\]
As in \eqref{F} we conclude that
\begin{gather*}
\sum_{x} \mu \left(
\var_x \bigl(\mu_{\ex}(f) \bigr) \right)
\\
\le 2\sum_{x}\mu\bigl(c_x\var_x(f) \bigr) + 2 \bigl(\sumtwo{I\subset [k]}{I\neq \emptyset}\l_I\bigr)\Bigl(\ \sup_z 
\sumtwo{I\subset [k]}{I\neq \emptyset}
\ \sumtwo{x}{x\cup \D^{(I)}_x\ni z}\l_I^{-1}\e^{(I)}_x\Bigr)\sum_{z}
\mu\left(\var_z \bigl(\mu_{Ext_z}(f) \bigr)
\right),
\end{gather*}
which proves the theorem if 
\[
\bigl(\sumtwo{I\subset [k]}{I\neq \emptyset}\l_I\bigr)\Bigl(\ \sup_z 
\sumtwo{I\subset [k]}{I\neq \emptyset}
\ \sumtwo{x}{x\cup \D^{(I)}_x\ni z}\l_I^{-1}\e^{(I)}_x\Bigr)\le 1/4.
\]
\end{proof}
\subsection{An application within supercritical
percolation in two dimensions}\label{supercritical}
In this section we restrict ourselves to the case in which the single
site probability space $(S,\hat \mu)$ coincides with
$\left(\{0,1\},B(p)\right)$ and the lattice dimension is equal to
two. Given $\o\in \O:=\O_{\bbZ^2}$ we will say that $x\in \cC(\o):=\{x\in \bbZ^2:\
\o_x=0\}$ belongs to an \emph{infinite
cluster of zeros} if the connected (w.r.t. to the graph
structure of $\bbZ^2$) component of $\cC(\o)$ containing $x$ is
unbounded. It is well known that there exists $p_c\in
(0,1)$
such
that 
\[
\theta(p):= \mu(\text{the origin
  belongs to an infinite cluster})
\] 
is positive iff $p<p_c$ and that
moreover there exists $\mu$-a.s. a unique unbounded component of $\cC(\o)$
The conjectured threshold $p_c$ is approximately $1-p_c\approx 0.59$ \cite{Grimmett}. Fix $x\in\mathbb Z^2$, $\omega\in\Omega$ and let  $\bar\omega$ be the configuration obtained from $\omega$ by setting to $1$ site $x$, namely $\bar\omega_x=1$ and $\bar\omega_y=\omega_y$ for $y\neq x$. We let $c^\infty_x(\omega)=1$ if  at least two nearest
neighbors of $x$ belong to an infinite cluster of zeros in the configuration  $\bar\omega$, $c^\infty_x(\omega)=0$ otherwise.  \begin{theorem}
\label{thm:inf-cluster}
There exists $p_0\in (0,p_c)$ such that for any $p\le p_0$ and any local function $f$
\begin{equation}
  \label{eq:thinfinity}
\var(f)\le 4 \sum_x \mu\Bigl(c^\infty_x \var_x(f)\Bigr).
\end{equation}
\end{theorem}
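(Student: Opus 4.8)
\noindent\emph{Strategy.} Since $c^\infty_x$ is non-local and badly violates the exterior condition, I would not apply Theorem~\ref{thm:1} to it directly, but bound it from below by a countable product of \emph{local, forward-looking} constraints. Fix the half-space exhaustion of Example~\ref{example:1} with $z=(1,1)$, so that the exterior of $x$ is $\ex=\{y\in\bbZ^2:\ y_1+y_2>x_1+x_2\}$. For each direction $j\in\{1,2\}$ and each scale $\ell\ge1$ I would build a constraint $c^{(\ell,j)}_x$ depending only on the configuration in a box $B^{(j)}_\ell(x)$ of side $L_\ell$ such that: (i) $B^{(j)}_\ell(x)\subset\ex$, with $B^{(1)}_1(x)\ni x+\vec e_1$ and $B^{(2)}_1(x)\ni x+\vec e_2$; (ii) on $\bigcap_{\ell\ge1}\{c^{(\ell,j)}_x=1\}$ there is an infinite connected set of zeros containing $x+\vec e_j$; (iii) $\e^{(\ell,j)}:=\mu(1-c^{(\ell,j)}_x)$ is exponentially small in $L_\ell$, with exponential rate $c(p)\to\infty$ as $p\to0$ (for $\ell=1$ one also allows an extra additive term $p$, coming from requiring $\o_{x+\vec e_j}=0$). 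Granting such a family, (i)--(ii) give $\prod_{\ell,j}c^{(\ell,j)}_x\le c^\infty_x$ and (i) gives the exterior condition for each family $\{c^{(\ell,j)}_x\}_{x}$. Once \eqref{eq:key_condition} is verified, Theorem~\ref{thm:1} applied to the first $N$ of these constraints gives $\var(f)\le 4\sum_x\mu\big((\prod_{m\le N}c^{(m)}_x)\var_x(f)\big)$; letting $N\to\infty$ (by dominated convergence, since $\var_x(f)=0$ for all but finitely many $x$ when $f$ is local) and using $\prod_{\ell,j}c^{(\ell,j)}_x\le c^\infty_x$ yields \eqref{eq:thinfinity}. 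This last step is the monotonicity observation of Remark~\ref{rem:extension}.

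\smallskip
\noindent\emph{The construction.} For each $j$ I would lay the boxes $B^{(j)}_\ell(x)$ out consecutively along a ray issuing from $x+\vec e_j$ into the quadrant $\{y:\ y_i\ge x_i,\ i=1,2\}$ --- on which $y_1+y_2\ge x_1+x_2+1$, so (i) is automatic --- choosing the two rays far enough apart, and $L_\ell=\ell$, so that $L_\ell$ is negligible compared to the position along the ray and the two chains, as well as the boxes of a single chain at scale-distance $\ge2$, are pairwise disjoint, while $B^{(j)}_\ell$ overlaps $B^{(j)}_{\ell+1}$ in a rectangle $R^{(j)}_\ell$ with both side-lengths of order $L_\ell$. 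The event $c^{(\ell,j)}_x$ is a conjunction of finitely many crossing events for zeros inside $B^{(j)}_\ell$: a crossing of $R^{(j)}_{\ell-1}$ in one coordinate direction, a crossing of $R^{(j)}_\ell$ in the perpendicular direction, and that both belong to one connected set of zeros in $B^{(j)}_\ell$ (for $\ell=1$ one adds $\o_{x+\vec e_j}=0$ and that $x+\vec e_j$ lies in this set). Then the zero-cluster realising $c^{(\ell,j)}_x$ and the one realising $c^{(\ell+1,j)}_x$ both cross $R^{(j)}_\ell$ in perpendicular directions, hence intersect by planar topology; chaining over $\ell$ gives (ii). For (iii): since we only claim the statement for $p\le p_0$ with $p_0$ small, the standard site-percolation duality together with a crude Peierls bound on $*$-connected paths of $1$'s crossing a box give, for a box of side $n$, $\mu(\text{no crossing cluster of zeros})\le C\,(8p)^{\,c n}$ for suitable $c$, whose exponential rate diverges as $p\to0$.

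\smallskip
\noindent\emph{Checking \eqref{eq:key_condition}.} Re-index the constraints as $\{c^{(m)}_x\}_{m\ge1}$, write $\ell(m)$ for the scale of $c^{(m)}_x$, and take $\l_I=\prod_{m\in I}(\e^{(m)})^{1/4}$, so that $\sum_{I\ne\emptyset}\l_I=\prod_m\big(1+(\e^{(m)})^{1/4}\big)-1$; this converges because $\sum_m(\e^{(m)})^{1/4}<\infty$ by (iii), and it tends to $0$ as $p\to0$. For the second factor, within each chain the supports $\D^{(m)}_x$ of constraints whose scale indices differ by at least $2$ are disjoint, so the constraints of a chain form a path-like dependency graph and, splitting by the parity of the scale index, $\e^{(I)}_x\le\prod_{m\in I}(\e^{(m)})^{1/2}$, hence $\l_I^{-1}\e^{(I)}_x\le\prod_{m\in I}(\e^{(m)})^{1/4}$. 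Moreover each $\D^{(m)}_x$ is, as a function of $x$, a fixed translate of a box of area of order $L_{\ell(m)}^2$, so $|\{x:\ z\in\{x\}\cup\D^{(I)}_x\}|\le 1+\kappa\sum_{m\in I}L_{\ell(m)}^2$ for a universal $\kappa$. Summing $\l_I^{-1}\e^{(I)}_x$ with this multiplicity over all finite $I$ factorises into $\prod_m\big(1+(\e^{(m)})^{1/4}\big)$ times $\big(1+\kappa\sum_m L_{\ell(m)}^2(\e^{(m)})^{1/4}\big)$, and both $\sum_m(\e^{(m)})^{1/4}$ and $\sum_m L_{\ell(m)}^2(\e^{(m)})^{1/4}$ are finite and tend to $0$ as $p\to0$ by (iii) (with $L_\ell=\ell$). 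Hence the product of the two factors in \eqref{eq:key_condition} is $<1/8$ once $p$ is small enough, which is exactly what is required.

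\smallskip
\noindent\emph{Main difficulty.} The delicate point is step (ii): organising the hierarchy of box-crossing events so that their \emph{conjunction} provably produces a single infinite cluster of zeros through the prescribed neighbour of $x$, while simultaneously keeping every constraint local and supported in $\ex$ and letting the failure probabilities decay fast enough in the scale index to close \eqref{eq:key_condition}. A one-scale version of the construction fails precisely here: its failure probabilities would be constant in the constraint index, making the supremum sum in \eqref{eq:key_condition} diverge, which is what forces the multi-scale set-up. Everything else is a renormalisation-type gluing together with the bookkeeping above.
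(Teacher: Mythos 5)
Your overall strategy matches the paper's exactly: bound $c^\infty_x$ from below by a countable product of local constraints satisfying the exterior condition with respect to the half-space exhaustion of Example~\ref{example:1}, apply Theorem~\ref{thm:1} to the first $N$ of them, and let $N\to\infty$ using locality of $f$. You also correctly identify the reason a multi-scale family is unavoidable: a single-scale construction has failure probabilities bounded away from zero uniformly in the constraint index, making the supremum sum in \eqref{eq:key_condition} diverge. The concrete realization is however different, and it is worth comparing the two. The paper uses the classical Chayes--Chayes construction for supercritical percolation: \emph{nested} rectangles $R^{(i)}_n+x$ with exponentially growing sides $\ell_n=2^n$, aspect ratio $2{:}1$ and alternating orientation, all sharing a corner near $x$; the constraint $c^{(n,i)}_x$ is just a hard crossing of $R^{(i)}_n+x$, and nesting together with the alternation of orientation makes a hard crossing of $R^{(i)}_n$ and a hard crossing of $R^{(i)}_{n+1}$ intersect automatically by planarity, so no extra connectivity clause is needed. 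Your linearly growing, overlapping-but-not-nested boxes laid along rays also produce an infinite cluster, but only after adding the third clause that the two prescribed crossings lie in one cluster of $B^{(j)}_\ell$. That clause does fail with probability exponentially small in $L_\ell$, but this needs a uniqueness-of-crossing-cluster estimate (e.g.\ a dual $*$-connected Peierls bound on a closed separating path) on top of the bare crossing estimate --- so your ``main difficulty'' remark is exactly right, and the paper's nested geometry is what makes that step come for free. The verification of \eqref{eq:key_condition} also differs in bookkeeping: the paper takes $\l_I=e^{-m(p)\ell_{n(I)}/2}$ (depending only on the largest scale $n(I)$ in $I$, and $\l_{\{0\}}=\sqrt p$) and uses the crude bound $\e^{(I)}_x\le e^{-m(p)\ell_{n(I)}}$ coming from a single factor, while you take product weights $\l_I=\prod_{m\in I}(\e^{(m)})^{1/4}$ and sharpen $\e^{(I)}_x$ via independence of constraints with disjoint supports plus Cauchy--Schwarz; both variants yield \eqref{eq:key_condition} for $p$ small, since both summed quantities tend to zero as $p\to 0$. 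In short: correct, same skeleton, but the paper's nested rectangles buy a free gluing step and its max-scale weights buy simpler arithmetic.
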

\begin{remark}
It follows in particular that, for all $p$ sufficiently small, the kinetically constrained model
with
constraints $\{c_x^\infty\}_{x\in \bbZ^2}$ (cf. Section
\ref{sec:models}) has its relaxation time bounded by $4$.  
\end{remark}
\begin{proof}
We will make use of the following standard construction for
super-critical percolation \cite{Chayes}. Let
$\ell_n=2^n$ and define 
$R_n$ to be a rectangle of the form either $[\ell_{n}]\times [\ell_{n-1}]$
or $[\ell_{n-1}]\times [\ell_{n}]$ according to whether $n$ is even or
odd. We will also denote by $R^{(1)}_n (R^{(2)}_n)$ the rectangle
obtained by translating $R_n$ by the
vector $-\vec e_1 (-\vec e_2)$ (see Figure \ref{fig:rectangle}).
With the help of the families $\{R^{(1)}_n, R^{(2)}_n\}_{n\in \bbN}$
we finally introduce a new family of constraints as
follows. 

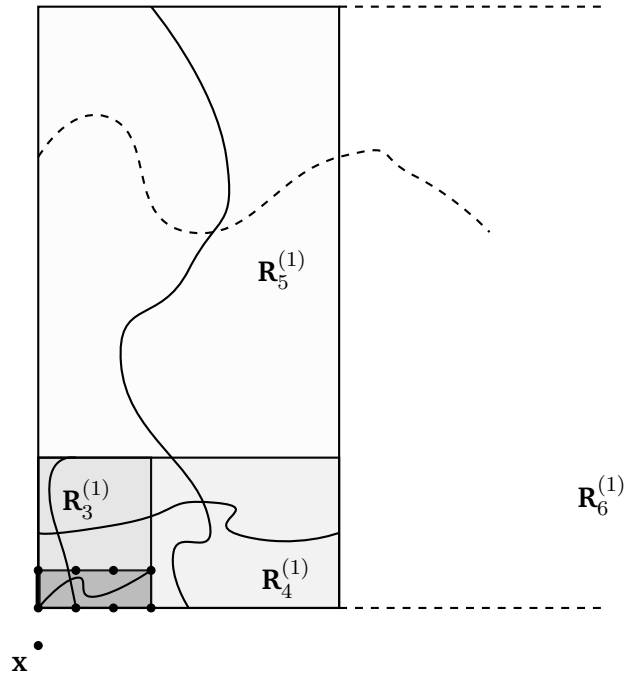
\begin{figure}
\begin{tikzpicture}[scale=0.5,>=latex]
[x=0.5cm, y=0.5cm, semitransparent]
\begin{scope}
\draw [line width=2pt, draw=black, fill=black] (0,1) rectangle (0,2);
\draw [thick, draw=black, fill=gray, fill opacity=0.4] (0,1) rectangle (3,2);
\draw [thick, draw=black, fill=gray, fill opacity=0.1] (0,1) rectangle (3,5);
\draw [thick, draw=black, fill=gray, fill opacity=0.08] (0,1) rectangle (8,5);
\draw [thick, draw=black, fill=gray, fill opacity=0.02] (0,1)
rectangle (8,17);
\draw [thick, dashed] (8,1) -- (15,1);
\draw [thick, dashed] (8,17) -- (15,17);
\filldraw[black]  (0,1) circle (3pt);
\filldraw[black]  (0,2) circle (3pt);
\filldraw[black]  (0,0) circle (3pt);
\filldraw[black]  (1,1) circle (3pt);
\filldraw[black]  (2,1) circle (3pt);
\filldraw[black]  (2,2) circle (3pt);
\filldraw[black]  (3,2) circle (3pt);
\filldraw[black]  (3,1) circle (3pt);
\filldraw[black]  (1,2) circle (3pt);
\node at (-0.5,-0.5) {$\textbf x$};
\node at (1.3,3.9) {$\textbf R^{(1)}_3$};
\node at (6.6,1.8) {$\textbf R^{(1)}_4$};
\node at (6.5,10) {$\textbf R^{(1)}_5$};
\node at (15,4) {$\textbf R^{(1)}_6$};
\draw [thick,black] plot [smooth, tension=1] coordinates { (0,1) (1,1.8)
  (1.5,1.3) (3,2)};
\draw [thick,black] plot [smooth, tension=1] coordinates { (1,1) (0.8,2)
  (0.3,4.3) (1,5)};
\draw [thick,black] plot [smooth, tension=1.5] coordinates { (0,3) (2,3.2)
  (4.7,3.8) (5.5, 2.9) (8,3)};
\draw [thick,black] plot [smooth, tension=1] coordinates { (4,1) (3.6,2.5)
(4.5,3.5) (2.2,7.5) (4,10) (5,13) (3,17)};
\draw [thick,dashed, black] plot [smooth, tension=1] coordinates {
  (0,13) (2,14) (4,11)
(8,13) (10,12.5) (12,11) };
\end{scope}
\end{tikzpicture}
  \caption{
A drawing of the first five rectangles $\{R_n^{(1)}+x\}_{n=1}^5$
together with a pictorial
representation of the hard crossings of zeros (the solid lines)
required by the auxiliary
contraint $c^{(n,1)}_x.$ The dashed curved line represents a piece of the
hard crossing for the next rectangle $R_6^{(1)}+x $ (the two
horizontal dashed lines). Notice that each rectangle has its leftmost
lowermost vertex always at $x+\vec e_2$ and that the first rectangle
$R_1^{(1)}$ consists of only two vertices, $x+\vec e_2$ and $x+2\vec e_2$.}
  \label{fig:rectangle}
  \end{figure}
  Let a path $\g$ in
  $\bbZ^d$ of length $|\gamma|:=k$ be  an ordered sequence of $k$ vertices of
  $\bbZ^2$ such that two consecutive sites are nearest neighbors of
  each other. 
For $i=1,2$ let $c_x^{(n,i)}$ be the indicator function of the
event that inside the rectangle $R_n^{(i)}+x$ there exists an \emph{hard
  crossing}, i.e. a path $\g=(x^{(1)},\dots,x^{(m)})$ joining the two opposite
shortest sides such that $\o_{x^{(j)}}=0$ for all $j\in [m]$.
Let also $c^{(0)}_x$ be the indicator of
the event that  $\o_{x+\vec e_1}=\o_{x+\vec e_2}=0$. Notice that, by
construction, the
above constraints satisfy the exterior condition
\ref{ext-cond} w.r.t. to the half-spaces defined in Example
\ref{example:1} with $z=(1,1)$. 
Moreover it is easy to check that
\begin{equation}
  \label{eq:monotone}
c^{(0)}_x\prod_{n=1}^\infty c_x^{(n,1)}c_x^{(n,2)}\le c^\infty_x\quad \forall x,
\end{equation}
so that it is enough to prove the constrained Poincar\'e inequality
\eqref{eq:thinfinity}
with $c^\infty_x$ replaced by $c^{(0)}_x\prod_{n=1}^\infty
c_x^{(n,1)}c_x^{(n,2)}$. More precisely we will prove that, for any $k\in \bbN$ and any
local function $f$,
\begin{equation}
  \label{eq:finN}
\var(f)\le 4 \sum_x \mu\Bigl(c^{(0)}_x\prod_{n=1}^k c_x^{(n,1)}c_x^{(n,2)} \var_x(f)\Bigr).
\end{equation}
The theorem will then follow by taking the limit $k\to +\infty$ and
using \eqref{eq:monotone}. In order to prove \eqref{eq:finN} we want
to apply Theorem \ref{thm:1} which in turn requires finding a family
of weights $\{\l_I\}_{I\subset [k]\cup\{0\}}$ satisfying
\eqref{eq:key_condition}. A standard Peierls argument implies that, for
all $p$ small enough,
\begin{align*}
\mu(1-c_x^{(n,i)})\le e^{-m(p)\ell_{n}},
\end{align*}
with  $\lim_{p\to 0}m(p)=+\infty$.
In particular, recalling the definition of $\e_x^{(I)}$ and
$\D_x^{(I)}$ from Section \ref{sec:main_result}, we have the following bounds: 
\begin{align*}
\e_x^{(I)} &\le  e^{-m(p)\ell_{n(I)}}, &         |\D_x^{(I)}|&\le 3
                                                             \ell^2_{n(I)}
  &
  & \text{ if } n(I):=\max\{i\in I\}>0, \\
\e_x^{(I)} &\le 2p,  &    |\D_x^{(I)}|&\le 2 & &\text{otherwise.}
\end{align*}
Let now $\l_I= e^{-\frac{m(p)}{2}\ell_{n(I)}}$ if $I\neq \{0\}$ and $\l_I=\sqrt{p}$
if $I=\{0\}$. With this choice it is easy to check that there
exists $p_0$ independent of $k$ such that for $p<p_0$
\[
\sum_{I\subset [k]\cup\{0\}}\l_I\le 1/2
\] 
and 
\begin{gather*}
\sup_z
\sumtwo{I\subset [k]\cup\{0\}}{I\neq \emptyset}\ \sumtwo{x\in \bbZ^d}{x\cup
  \D^{(I)}_x\ni z}\l_I^{-1}\e^{(I)}_x 
\le 3
\sumtwo{I\subset [k]\cup\{0\}}{I\neq \emptyset,\ I\neq
  \{0\}}\ell_{n(I)}^2 e^{-\frac{m(p)}{2}\ell_{n(I)}} +4\sqrt{p}\\
\le 3 \sum_{n=1}^\infty 2^n 4^n e^{-\frac{m(p)}{2}\ell_{n}}+4\sqrt{p} \le 1/4.
\end{gather*}
In conclusion \eqref{eq:key_condition} holds for all small enough $p$
independent of $k$ and the theorem follows.
\end{proof}

\section{A general approach to prove a Poincar\'e inequality for kinetically constrained spin models}
\label{application}
In this section we start from the general constrained Poincar\'e inequality
proved in Theorem
\ref{thm:1} to develop a quite robust and
general scheme proving a special kind of Poincar\'e constrained
inequality (cf. Theorem \ref{thm:appl} and Corollary \ref{rhs 9bis})
that will be crucial to determine a sharp upper bound on the mean
infection time of KCM.
 Concrete
and succesful applications to basic kinetically constrained models
(cf. Theorem \ref{thm:main1}) will be given
in the next section. 

The starting point of our approach is the definition of good and super-good single
site events. 
Given two events $G_1,\, G_2$ in the probability space $(S,\hat
\mu),$ let $p_1:=\hat \mu(G_1)$ and $p_2:=\hat \mu(G_2)$. We will assume that $G_1$ is very likely while $G_2$ is very unlikely. In the sequel we will refer to $G_1$ and $G_2$ as the \emph{good}
and \emph{super-good} events respectively. In the applications, $G_2$
will guarantee the presence of a certain bootstrap critical droplet,
while $G_1$ will guarantee the presence of enough infected vertices to
allow a critical droplet to grow.   
\begin{definition}[Good and super-good paths]
Given $\o\in \O=S^{\bbZ^d}$ we will say that a vertex $x$ is good if
$\o_x\in G_1$ and super-good if $\o_x\in G_2$. We will say that a
path $\g=(x^{(1)},\dots, x^{(k)})$ is a \emph{good path for
  $\o$} if each vertex in $\g$ is good. A path will be called
super-good if it is good and it contains at least one  super-good vertex.
\end{definition}
Before stating the main result we need a last notion. For any mapping  $G_1\overset{\Phi}{\mapsto} G_2$ let 
  \begin{equation}
    \label{eq:10}
\l_\Phi=\max_{\s\in G_2}\sum_{\s'\in G_1:\
  \Phi(\s')=\s}\frac{\hat\mu(\s')}{\hat\mu(\s)},
  \end{equation} 
and, for any $\o$ such that $\o_x\in G_1$, let $\Phi^{(x)}:\O\mapsto
\O$ be given by  
\begin{equation}
 \label{eq:12}
\Phi^{(x)}(\o)_z :=
  \begin{cases}
 \Phi(\o_x)& \text{if $z=x$}\\ 
\o_{z}& \text{otherwise.}
  \end{cases}
\end{equation}
\begin{theorem}
\label{thm:appl}
There exist $\d \ll 1$ and $c>0$ such that, for any $G_1\overset{\Phi}{\mapsto} G_2$ and all $p_1,p_2$ with
$\max(p_2, (1-p_1)\log(1/p_2)^{2})\le \d$, the following holds:  
\begin{gather}
\var(f)
\le c\, ( \l_\Phi p^{-4}_2)^d\  \Bigl[\
\sum_{x}\mu\Bigl(\ \bigl[\prod_{i\in [d]}\id_{\{\o_{x+\vec e_i}\in
      G_2\}}\bigr]\var_x(f)\Bigr) \nonumber  \\
\phantom{-------}+\sum_{x,y:\ d_1(x,y)=1}\mu\Bigl(\id_{\{\o_x\in G_1, \o_y\in G_2\}}\left[f(\Phi^{(x)}(\o))-f(\o)\right]^2\Bigr)\Bigr].
\label{eq:9bis}
\end{gather}
\end{theorem}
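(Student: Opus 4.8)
The plan is to derive \eqref{eq:9bis} from the abstract Poincaré inequality of Theorem~\ref{thm:1} by a two–stage argument: first a \emph{renormalization} step that replaces the original single--site problem by one on a coarser lattice whose single--site space is a block configuration, and then a \emph{canonical paths} (flow) argument inside each block to unfold the constrained variance on the block into the two types of local terms on the right of \eqref{eq:9bis}. More precisely, I would fix a mesoscopic length scale $\ell\asymp \log(1/p_2)$ (this is why the hypothesis is phrased in terms of $(1-p_1)\log(1/p_2)^2$), tile $\bbZ^d$ into blocks of side $\ell$, and on the renormalized lattice declare a block to be ``good'' if it contains a good path in each of the $d$ coordinate directions (so that neighbouring good blocks can be chained), and ``super--good'' if in addition it contains a super--good vertex. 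The smallness condition on $(1-p_1)$ guarantees, via a union bound over the $O(\ell^{d-1})$ possible crossing paths, that the block is good with overwhelming probability, while $p_2^{\Theta(\ell^d)}$-type estimates control the super--good probability; this is exactly the regime in which the renormalized constraints satisfy the hypothesis \eqref{eq:key_condition} of Theorem~\ref{thm:1} with a suitable choice of weights $\l_I$, giving a constrained Poincaré inequality on the coarse lattice with a constant that is $O(1)$ rather than exponential.

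The second stage is to bound the renormalized Dirichlet term $\sum_{X}\mu\big(c_X\var_X(f)\big)$, where $X$ ranges over blocks and $c_X$ is the indicator that all $d$ coordinate--neighbour blocks are super--good, by the right--hand side of \eqref{eq:9bis}. Here $\var_X(f)$ is a variance only over the $\ell^d$ coordinates inside block $X$, and the event $c_X$ provides, in each neighbouring block, a super--good vertex together with a good path reaching it. The standard move is to write $\var_X(f)=\tfrac12\sum_{\eta,\eta'}\mu_X(\eta)\mu_X(\eta')\,[f(\eta)-f(\eta')]^2$ and to construct, for each pair $(\eta,\eta')$ of block configurations, a canonical path in configuration space connecting them using only \emph{allowed} moves: single--site resamplings at a vertex all of whose coordinate--neighbours are super--good (which produces the first term), and ``mapping moves'' $f(\Phi^{(x)}(\o))-f(\o)$ along an edge $(x,y)$ with $x$ good and $y$ super--good, which are used to transport a super--good vertex along a good path and thereby create the auxiliary super--good sites needed to license further resamplings (this produces the second term). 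One bounds the resulting sum by a congestion estimate: each elementary move is used by at most $(\l_\Phi p_2^{-4})^{O(d)}$ pairs, with the $p_2^{-4}$ coming from the few super--good sites that must be present, and $\l_\Phi$ from the Radon--Nikodym cost in \eqref{eq:10} of inverting $\Phi$; absorbing the $O(1)$ coarse Poincaré constant and the per--block combinatorial factors into the constant $c$ yields \eqref{eq:9bis}.

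The main obstacle, and the place where the bulk of the technical work sits, is the design of the canonical paths inside a block together with the congestion bound: one must show that from the presence of a single super--good vertex and good crossings one can, move by move, ``infect'' the whole block (i.e.\ bring any two configurations into contact) without ever violating the constraint that every resampled site has super--good coordinate--neighbours, and that no single edge or vertex is overloaded by more than the claimed polynomial-in-$p_2^{-1}$ and $\l_\Phi$ factor. This is essentially a discrete combinatorial lemma about ``spreading'' a super--good seed along good paths — a finite--volume, non--random analogue of bootstrap percolation internal spanning — and the careful accounting of how many times each mapping move and each resampling is reused across the $\binom{|S^\ell|}{2}$ pairs is where the exponents $4$ and $d$ in $(\l_\Phi p_2^{-4})^d$ are pinned down. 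A secondary point requiring care is the choice of the weights $\{\l_I\}$ in \eqref{eq:key_condition} on the renormalized lattice: one needs $\sum_I \l_I = O(1)$ while simultaneously $\sum_I \l_I^{-1}\e_X^{(I)}$ is summable and small, which forces $\ell$ to grow like $\log(1/p_2)$ and explains the precise form of the hypothesis $\max(p_2,(1-p_1)\log(1/p_2)^2)\le\d$.
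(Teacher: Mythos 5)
There is a genuine gap, and it is in your stage~1. If $c_X$ is the indicator that ``all $d$ coordinate--neighbour blocks are super--good'', then (since a super--good block has probability roughly $\ell^d p_2 \ll 1$) $c_X$ is a very \emph{unlikely} event, so its failure probability is close to $1$ and the hypothesis~\eqref{eq:key_condition} of Theorem~\ref{thm:1} cannot hold for it, no matter how the weights $\l_I$ are chosen. Theorem~\ref{thm:1} can only produce a constrained Poincar\'e inequality whose constraints are \emph{likely}; it cannot jump directly to the unlikely ``super--good neighbours'' constraint. Your plan is silently missing the step that bridges the two, and this missing step is in fact where most of the factor $(\l_\Phi p_2^{-4})^d$ is generated. (A secondary issue: you estimate the super--good block probability as $p_2^{\Theta(\ell^d)}$, but with a block defined as ``contains at least one super--good vertex'' it is $\approx \ell^d p_2$; and the hypothesis $(1-p_1)\log(1/p_2)^2\le\d$ comes from controlling $\ell\cdot\mu(1-c_x^{(1)})$ with $\ell\asymp\log(1/p_2)$, not from a crossing union bound.)

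The paper's proof does not renormalise at all inside Theorem~\ref{thm:appl}; the block structure you describe only enters later, in Section~\ref{canopaths} and Corollary~\ref{rhs 9bis}. Instead, Theorem~\ref{thm:appl} is proved in two stages on the \emph{original} lattice $\bbZ^d$ with the abstract single--site space $S$: Proposition~\ref{thm:3} applies Theorem~\ref{thm:1} with two auxiliary, \emph{likely} constraints ($c^{(1)}_x$: a good segment of length $\ell\asymp\log(1/p_2)$ in each forward direction; $c^{(2)}_x$: a super--good path of length $L\asymp 1/p_2^2 = e^\ell$ starting on that segment) to obtain $\var(f)\le 4\sum_x\mu(\id_x\var_x(f))$, where $\id_x$ requires a super--good path of length $\le 1/p_2^2$ starting at each $x+\vec e_i$. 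Then Lemma~\ref{prop:1} — this is the missing bridge in your write--up — uses the map $\Phi$ \emph{repeatedly along that $1/p_2^2$--long path} to transport the super--good vertex from wherever it sits on the path down to $x+\vec e_i$, paying at each step a term $\bigl[f(\Phi^{(x^{(j)})}(\o))-f(\o)\bigr]^2$ and a Radon--Nikodym cost $\l_\Phi$; the factor $p_2^{-2}$ per direction comes from the path length, and iterating over the $d$ directions produces $(\l_\Phi p_2^{-4})^d$. Note in particular that these $\Phi$--moves happen over distances of order $1/p_2^2$, exponentially larger than your block side $\ell\asymp\log(1/p_2)$, so within--block mapping moves cannot account for the correct constant even in principle.
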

\begin{remark}
\label{generalization}
We could have stated Theorem \ref{thm:appl} in a more general form in
which the constraint $\prod_{i\in [d]}\id_{\{\o_{x+\vec e_i}\in
      G_2\}}$, appearing in the first term in the r.h.s. of
    \eqref{eq:9bis}, is replaced by $\prod_{y\in A +x}\id_{\{\o_{y}\in
      G_2\}}$, where $A\subset Ext_0$ is some finite set whose cardinality is
    independent of $p_1,p_2$. For example in two dimensions $A$ could
    be $\{\vec e_1\}\cup \{\vec e_2 + \vec e_1\}\cup\{\vec e_2\}\cup\dots\cup \{\vec
    e_2-m \vec e_1\}$. For future applications \cite{MMT} the freedom given by
    the choice of the set $A$ 
    will be quite crucial. The proof in this slightly more general
    case is identical to the one given below. The same applies for the
    developments discussed in Section \ref{canopaths}.  
\end{remark}
The first term in the r.h.s of \eqref{eq:9bis} is a constrained
Dirichlet form $\cD(f)$ as in the r.h.s. of \eqref{CP}, with constraints $c_x:=\prod_{i\in
  [d]}\id_{\{\o_{x+\vec e_i}\in G_2\}}$. These constraints satisfy
the exterior condition w.r.t. the half-spaces defined in Example
\ref{example:1} with $z=(1,\dots,1)$ but, at the same time, they are very
unlikely (recall that $\hat \mu(G_2)\ll 1$) so that we cannot apply
directly Theorem \ref{thm:1} to our setting. Moreover 
the fact that the $\{c_x\}$ are unlikely implies that a Poincar\'e
inequality of the form
$
 \var(f)
\le C \cD(f)
$
for all local $f$ and some finite constant $C$ cannot hold. To see
that take for instance $\{f_n\}_{n=1}^\infty$ to be a sequence of local functions approximating the indicator of the event that the origin belongs
to an infinite oriented cluster of not super-good vertices
In other words there exists a infinite path $\g=(x^{(1)},\dots,
  x^{(k)},\dots)$ starting at the origin such that $x^{(i)}\prec x^{(i+1)}$
  and $\o_{x^{(i)}}\notin G_2$ for all $i$. Thus the second term in
the r.h.s. of \eqref{eq:9bis} plays an important role.

Our approach
is first to prove a different kind of constrained Poincar\'e
inequality (cf. Proposition \ref{thm:3}) in which the term in
\eqref{eq:9bis} involving $\Phi$ is missing and the
constraint $c_x$ above
is replaced by the weaker (and very likely) constraint that for all $i\in[d]$ there
exists a super-good path $\g^{(i)}$ in $\bbZ^2\setminus \{x\}$ starting at
$x+\vec e_i$ and of length not larger
than $1/p_2^{2}$. Secondly (cf. Lemma \ref{prop:1}), using repeatedly the mapping $\Phi^{x}$
for each $x\in \g^{(i)}$ starting at the super-good vertex of $\g^{(i)}$, we ``bring'' the super-good vertex of $\g^{(i)}$ at  $x+\vec e_i$. In doing that we pay a cost which is embodied in the second term in the
r.h.s. of \eqref{eq:9bis}.    

\begin{proof}[Proof of Theorem \ref{thm:appl}]
In what follows we assume that we have fixed some mapping  $G_1\overset{\Phi}{\mapsto} G_2$. We begin by proving the first step of the roadmap just described.
\begin{proposition}
\label{thm:3}
There exists $\d \ll 1$ such that, for all $p_1,p_2$ satisfying 
$\max(p_2, (1-p_1)\log(1/p_2)^{2})\le \d$, the following
holds. Let $\id_x$ be the indicator of the event that $\forall i\in [d]$
there exists a super-good path $\g^{(i)}$
of length at most $1/p_2^2$ starting at $x+\vec e_i$. Then, for any local $f$, 
\begin{equation}
  \label{eq:1}
\var(f)\le 4\sum_x \mu\left(\id_x \var_x(f)\right). 
\end{equation}
\end{proposition}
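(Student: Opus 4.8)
The plan is to deduce Proposition \ref{thm:3} from Theorem \ref{thm:1} by choosing a suitable infinite family of constraints indexed by directions and path-lengths, and then checking the summability condition \eqref{eq:key_condition}. Concretely, for each $i\in[d]$ and each $\ell\geq 1$ introduce the constraint $c_x^{(i,\ell)}$ which is the indicator that there exists a super-good path of length exactly $\ell$ starting at $x+\vec e_i$ and contained in $Ext_x$ (more precisely in $\bbZ^d\setminus\{x\}$, using only coordinate directions that move into the exterior half-space of Example \ref{example:1} with $z=(1,\dots,1)$, so the exterior condition holds). Since $\id_x$ as defined in the statement is the indicator that for every $i\in[d]$ at least one such path of length $\leq 1/p_2^2$ exists, one has $\id_x \geq \prod_{i\in[d]} c_x^{(i,\ell_i)}$ for any fixed choice of lengths $\ell_i\leq 1/p_2^2$; but to apply Theorem \ref{thm:1} directly it is cleaner to work with the constraints $\hat c_x^{(i)} := \max_{\ell\leq 1/p_2^2} c_x^{(i,\ell)}$, the indicator that a super-good path of length $\leq 1/p_2^2$ starting at $x+\vec e_i$ exists. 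Then $\prod_{i\in[d]}\hat c_x^{(i)} = \id_x$ exactly, these $k=d$ constraints each satisfy the exterior condition, and it remains only to verify \eqref{eq:key_condition}.

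The key estimate is a bound on the failure probability $\hat\e_x^{(i)} = \mu(1-\hat c_x^{(i)})$, i.e. the probability that \emph{no} super-good path of length $\leq L:=\lfloor 1/p_2^2\rfloor$ starts at a given vertex. First I would bound the probability that a given vertex fails to be good: $1-p_1$, which is $\leq \d/\log(1/p_2)^2$, hence extremely small. The harder component is controlling the absence of a \emph{super-good} vertex along good paths. The heuristic is a two-scale argument: a super-good path of length $L\sim p_2^{-2}$ starting at $x+\vec e_i$ exists with very high probability because (a) with probability at least $1-L(1-p_1)\geq 1-\d/\log(1/p_2)$ the straight segment of length $L$ in direction $\vec e_i$ is entirely good, and (b) conditionally on a long good segment of $\sim p_2^{-2}$ sites, the chance that none of them is super-good is at most $(1-p_2/p_1)^{L}\leq \exp(-c\, p_2\cdot p_2^{-2}) = \exp(-c/p_2)$ since $\hat\mu(G_2\mid G_1)\geq p_2$ (as $G_2\subset G_1$ can be arranged, or at worst $p_2-(1-p_1)\geq p_2/2$). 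Combining, $\hat\e_x^{(i)} \leq \d/\log(1/p_2) + e^{-c/p_2}$, which we may take $\leq \d'$ for any prescribed $\d'$ by choosing the absolute constant $\d$ in the hypothesis small. (If one instead insists on paths that genuinely move into the exterior, one replaces the straight segment by a staircase path, which only changes constants.)

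For the summability condition \eqref{eq:key_condition} with $k=d$, I would take all weights $\l_I\equiv 1$. Then the left factor is $2\cdot(2^d-1)$, a constant depending only on $d$. For the right factor, fix $z$ and count pairs $(x,I)$ with $z\in x\cup \D_x^{(I)}$: the support $\D_x^{(I)}=\cup_{i\in I}\D_x^{(i)}$, and $\D_x^{(i)}$ is contained in the set of vertices reachable by a path of length $\leq L$ from $x+\vec e_i$, a set of size at most $C_d L^d = C_d p_2^{-2d}$. Hence the number of $x$ with $z\in x\cup\D_x^{(I)}$ is at most $C_d p_2^{-2d}$, and $\e_x^{(I)}\leq \hat\e_x^{(i)}$ for any $i\in I$, so the right factor is at most $(2^d-1)\cdot C_d p_2^{-2d}\cdot \d'$. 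The product is thus $\leq C_d' \, p_2^{-2d}\,\d'$, which is \emph{not} automatically less than $1/4$ — this is the real obstacle, since $p_2^{-2d}\to\infty$. The fix is to exploit that $\hat\e_x^{(i)}$ is super-polynomially small in $1/p_2$: from the bound above, $\hat\e_x^{(i)}\leq \d/\log(1/p_2) + e^{-c/p_2}$, and the dominant term $\d/\log(1/p_2)$ still does not beat $p_2^{-2d}$. So the geometry of the constraint must be chosen more carefully: rather than allowing the super-good path to wander in a $d$-dimensional box of side $L$, one should restrict $\D_x^{(i)}$ to a \emph{single} prescribed staircase path of length $L$ (so $|\D_x^{(i)}|\leq d\cdot L$, linear in $L$), at the cost of a slightly larger failure probability $\hat\e_x^{(i)}\leq L(1-p_1) + e^{-cL p_2}\leq \d/\log(1/p_2) + e^{-c/p_2}$. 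Then the right factor is $\leq (2^d-1)\cdot d\,L\cdot(\d/\log(1/p_2)+e^{-c/p_2}) = (2^d-1)d\,p_2^{-2}\cdot(\d/\log(1/p_2)+e^{-c/p_2})$; using $p_2^{-2}e^{-c/p_2}\to 0$ and $p_2^{-2}/\log(1/p_2)\to\infty$, this is still problematic. The genuinely correct choice, matching the hypothesis $(1-p_1)\log(1/p_2)^2\leq\d$, is to use path-length $L$ of order $\log(1/p_2)^2$ rather than $p_2^{-2}$ in the supporting geometry but allow the actual super-good path (whose existence is asserted) to have length up to $1/p_2^2$: that is, the constraint $\id_x$ only needs \emph{existence} of a length-$\leq p_2^{-2}$ path, but for the martingale decomposition the relevant support is forced to be the whole set of vertices within $\ell^1$-distance $p_2^{-2}$; to reconcile this one bounds $\hat\e_x^{(i)}$ by the probability of no good \emph{staircase} of length $\log(1/p_2)^2$ all of whose sites lie on a fixed ray, namely $\leq \log(1/p_2)^2(1-p_1) \leq \d$, and separately argues that a good staircase of that length contains a super-good site except with probability $(1-p_2)^{\log(1/p_2)^2}\leq p_2^{\log(1/p_2)}$, which is super-polynomially small. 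With this, $\hat\e_x^{(i)}\leq \d + p_2^{\log(1/p_2)}$, $|\D_x^{(i)}|\leq d\log(1/p_2)^2$, and the right factor in \eqref{eq:key_condition} is $\leq (2^d-1)\cdot d\log(1/p_2)^2\cdot(\d+p_2^{\log(1/p_2)})$. Now $\log(1/p_2)^2\cdot p_2^{\log(1/p_2)}\to 0$, and $\log(1/p_2)^2\cdot \d$ can be made small only if $\d$ is allowed to depend on $p_2$ — so the final, clean statement requires the hypothesis to be read as: $(1-p_1)\log(1/p_2)^2\leq\d$ forces the good-staircase failure probability times its length to be $\leq \d$, i.e. we should bound $\hat\e_x^{(i)}\cdot|\D_x^{(i)}|$ directly as $\leq \log(1/p_2)^2\cdot((1-p_1)+p_2^{\log(1/p_2)})\cdot\text{(const)} \leq C\d$, and then choose $\d$ small. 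I expect the main difficulty of the write-up to be exactly this bookkeeping: choosing the path-length scale in the supporting constraint so that (failure probability) $\times$ (support size) is controlled by the hypothesis $(1-p_1)\log(1/p_2)^2\leq\d$, while the length-$p_2^{-2}$ bound in the \emph{statement} of $\id_x$ is merely a convenient slack that makes $\id_x$ dominate the product of the genuinely-used constraints. Once the right factor is $\leq C\d$ and the left factor is a $d$-dependent constant, \eqref{eq:key_condition} holds for $\d$ small enough, Theorem \ref{thm:1} applies with the $\hat c_x^{(i)}$, $\prod_i \hat c_x^{(i)} = \id_x$, and \eqref{eq:1} follows (the constant $4$ being exactly the one in \eqref{CP}).
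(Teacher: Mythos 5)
Your proposal correctly identifies the strategy (apply Theorem~\ref{thm:1}, with constraints satisfying the exterior condition via staircase paths) and correctly identifies the central obstacle: the key condition~\eqref{eq:key_condition} roughly requires (failure probability)\,$\times$\,(support volume) to be small, and the two quantities pull in opposite directions. But you never arrive at the device that actually resolves this tension, and your final patch contains an arithmetic error that makes the dominant term fail.

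The paper's mechanism is a \emph{two-constraint decomposition}: it sets $\ell=\lfloor 2\log(1/p_2)\rfloor$ and $L=\lfloor e^\ell\rfloor\approx 1/p_2^2$, and introduces for each $x$ a \emph{local} constraint $c_x^{(1)}$ (the $d\ell$ vertices $\{x+k\vec e_i: i\in[d],\ k\in[\ell]\}$ are all good; support size $d\ell$, failure $\le d\ell(1-p_1)$) together with a \emph{long-range} constraint $c_x^{(2)}$ (for each $i$, some super-good path of length $\le L$ starts in $\{x+\vec e_i,\dots,x+\ell\vec e_i\}$ and lies in $Ext_x$; support size $O(L^d)$, failure bounded by supercritical-percolation estimates $\le d(e^{-c\log(1/(1-p_1))\ell}+(1-p_2)^L)$). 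The point is that $c_x^{(1)}$ has tiny support and modest failure probability controlled exactly by the hypothesis $(1-p_1)\log(1/p_2)^2\le\d$, while $c_x^{(2)}$ has huge support $\sim p_2^{-2d}$ but \emph{super-polynomially} small failure probability (because $(1-p_2)^L\sim e^{-1/p_2}$ and $e^{-c\log(1/(1-p_1))\ell}=(1-p_1)^{c\ell}$ is also super-polynomially small). Both cross terms in~\eqref{eq:key_condition} are then negligible, $c_x^{(1)}c_x^{(2)}\le\id_x$, and the conclusion follows. Your proposal, by contrast, insists on a \emph{single} constraint per direction, which means you must choose a single scale for the support, and no single scale works: $L\sim p_2^{-2}$ gives a support too big to beat the $(1-p_1)$-failure, while $L\sim\log(1/p_2)^2$ gives too few sites to find a super-good one.

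Concretely, your final estimate is wrong: you claim $(1-p_2)^{\log(1/p_2)^2}\le p_2^{\log(1/p_2)}$, but in fact $(1-p_2)^{\log(1/p_2)^2}\approx e^{-p_2\log(1/p_2)^2}\to 1$ as $p_2\to 0$, since $p_2\log(1/p_2)^2\to 0$. A path of length $\log(1/p_2)^2$ is far too short to contain a super-good site with high probability; one needs length at least $\gg 1/p_2$ for that, and the whole point of allowing $L\sim p_2^{-2}$ in the statement of $\id_x$ (and in $c_x^{(2)}$) is to get $(1-p_2)^L\sim e^{-1/p_2}$, which \emph{does} beat the $p_2^{-2d}$ support. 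Without the two-constraint split this failure probability would have to be multiplied by the same large support that controls where the good segment sits, and that product is indeed controlled only because the long-range constraint has the super-small failure probability, whereas the locally-supported constraint absorbs the comparatively large $(1-p_1)$-term.
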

\begin{proof}[Proof of the proposition]
In what follows all the auxiliary constraints that we will need to introduce will satisfy the exterior condition w.r.t. the exhausting
family of half-spaces defined in Example \ref{example:1} with
$z=(1,\dots,1)$.

Let $\ell= \lfloor 2\log(1/p_2)\rfloor,\ L=\lfloor e^\ell\rfloor$ and let us define two families of
constraints $\{c_x^{(1)}, c_x^{(2)}\}_{x\in \bbZ^d}$ as follows: 
\begin{align*}
  c_x^{(1)}&=
  \begin{cases}
    1& \text{if for all $i\in [d]$ and all $k\in [\ell]$ the vertex
      $x+k\vec e_i$ is good,}\\
0& \text{otherwise,}
  \end{cases}\\
c_x^{(2)}&=
  \begin{cases}
    1& \text{if for all $i\in [d]\ \exists$ a super-good path in
      $\ex$ of
      length at most $L$}\\
& \text{ starting in the set $\{x+\vec e_i,\dots , x +\ell
      \vec e_i\}$}\\
0& \text{otherwise.}
  \end{cases}
\end{align*}
Notice that $c_x^{(1)}c_x^{(2)}\le \id_x$. In order to apply theorem \ref{thm:1} to the above constraints we need to verify the key condition \eqref{eq:key_condition}. For
this purpose we begin to observe that the corresponding supports
satisfy $\D_x^{(1)}\subset \cup_{i=1}^d \{x+\vec e_i,\dots , x +\ell
      \vec e_i\}$ and
$\D_x^{(2)}\subset \{y\in \bbZ^d:\ d_1(x,y)\le \ell+L\}$. 
In particular there exists a numerical constant $\hat \d$ such that the condition for the validity of Theorem
\ref{thm:1} holds if 
\begin{equation}
  \label{eq:111}
d\ell \mu(1-c_x^{(1)}) + (\ell +L)^d \bigl(\mu((1-c_x^{(2)}))+
\mu((1-c_x^{(1)})(1-c_x^{(2)})) \bigr)\le \hat \d.
\end{equation}
A simple union bound proves that $
\mu(1-c_x^{(1)})\le d\ell (1-p_1)$,
while standard super-critical percolation bounds 
 valid for large
enough values of $p_1$ prove that
\[
\mu((1-c_x^{(1)})(1-c_x^{(2)}))\le \mu(1-c_x^{(2)})\le d \left( e^{-c\log(1/(1-p_1)) \ell} + (1-p_2)^L\right)
\]
for some constant $c>0$.
Fix e.g. the
  first direction. The probability that none of the vertices $x+\vec
  e_1,\dots, x+\ell\vec e_1$ belong to an infinite good path in
  $\ex$ is exponentially small in $\ell$ while the
  probability that a given path of length $L$ is super-good
  conditionally on being good is at 
least $1-(1-p_2)^L$. 
 It is now immediate to verify that given
$\hat \d>0$ there exists $\d>0$ small enough such
that $\max(p_2, (1-p_1)\log(1/p_2)^{2})\le \d$  implies
\eqref{eq:111}.
\end{proof}
Notice that so far the mapping $\Phi$ played no role. We will now use it in order to bound a generic term
$\mu\left( \id_x \var_x(f)\right)$ appearing in \eqref{eq:1}.   
Without loss of
generality we only treat the case $x=0$. 
\begin{lemma} 
\label{prop:1} In the same setting of Theorem \ref{thm:appl} there
exists $c >0 $ independent of $p_1,p_2,\Phi$ such that
\begin{gather}
  \mu \Bigl(\id_0 \var_0(f)\Bigr) 
\le c \ (\l_\Phi p_2^{-2})^d \ \Bigl[\mu\Bigl(\
  \Bigl[\ \prod_{i\in [d]}\id_{\{\o_{\vec e_i}\in
      G_2\}}\Bigr]\var_0(f)\Bigr) \nonumber \\
\phantom{-------}+\sumtwo{x,y: \in \L\setminus
  \{0\}}{d_1(x,y)=1}\mu\Bigl(\id_{\{\o_x\in G_1, \o_y\in G_2\}}\left[f(\Phi^{(x)}(\o))-f(\o)\right]^2\Bigr)\Bigr],
\label{eq:9}
\end{gather}
where 
$\L$ is the box centered at the origin of side $2\lfloor\, 1/p_2^2\, \rfloor$.  
\end{lemma}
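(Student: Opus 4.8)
\textbf{Proof plan for Lemma \ref{prop:1}.}

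The plan is to bound the generic term $\mu(\id_0\var_0(f))$ by a path-surgery argument. On the event that $\id_0=1$, for every direction $i\in[d]$ we may choose (measurably, say the lexicographically first) a good path $\g^{(i)}$ of length at most $1/p_2^2$ starting at $\vec e_i$ and containing at least one super-good vertex. I would first write $\var_0(f)\le \mu_0((f-g)^2)$ where $g$ is the value of $f$ after forcing, by repeated application of the single-site maps $\Phi^{(x)}$, the super-good state to be transported from the super-good vertex of $\g^{(1)}$ down the path to the site $\vec e_1$ (and similarly in the other directions, working one direction at a time since the paths $\g^{(i)}\subset Ext_0$ do not touch the origin). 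Once all the sites $\vec e_1,\dots,\vec e_d$ are super-good we are in the configuration where the constraint $\prod_{i\in[d]}\id_{\{\o_{\vec e_i}\in G_2\}}$ holds, and then $\var_0$ of $f$ there is controlled by the first term on the r.h.s. of \eqref{eq:9}.

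The key steps, in order. (1) \emph{Enumerate the surgery.} Fix a direction $i$; along $\g^{(i)}=(x^{(1)},\dots,x^{(m)})$ with $x^{(1)}=\vec e_i$, let $x^{(s)}$ be the first super-good vertex. Successively apply $\Phi^{(x^{(s-1)})},\Phi^{(x^{(s-2)})},\dots,\Phi^{(x^{(1)})}$: each step, by definition \eqref{eq:12}, turns a good vertex adjacent to a super-good one into a super-good vertex, so after $s-1\le 1/p_2^2$ steps the site $\vec e_i$ is super-good. (2) \emph{Telescope.} Write $f(\text{final})-f(\o)$ as a telescoping sum over these individual single-site moves; by Cauchy–Schwarz the square of the sum is at most (number of moves) times the sum of squares of the increments $[f(\Phi^{(x)}(\cdot))-f(\cdot)]^2$ evaluated along the trajectory. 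This produces a factor $O(p_2^{-2})$ per direction, hence $O(p_2^{-2d})$ overall. (3) \emph{Change of measure / sum over paths.} Each increment $[f(\Phi^{(x)}(\o'))-f(\o')]^2$ with $\o'$ a partially-modified configuration must be re-expressed as an average against $\mu$ of a term of the form appearing in the second sum of \eqref{eq:9}. This is where the constant $\l_\Phi$ enters: collapsing the map $\Phi^{(x)}$ (which is not measure preserving, being many-to-one on $G_1\to G_2$) costs exactly the Radon–Nikodym factor bounded by $\l_\Phi$ in \eqref{eq:10}, and there may be several such costs ($\l_\Phi^{d}$ in the worst case because up to $d-1$ earlier directions have already been modified along the way). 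Also, since we chose one specific path per configuration but want to sum over edges $(x,y)$, we pay a combinatorial factor equal to the number of good paths of length $\le 1/p_2^2$ through a given edge inside $\L$, which under the assumption $(1-p_1)\log(1/p_2)^2\le\d$ (so $p_1$ close to $1$) is $O(1)$ per unit length — this is where the hypothesis on $p_1$ is used, exactly as in Proposition \ref{thm:3}; assembling, the path-counting contributes another bounded factor, absorbed into $c$.

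\textbf{Main obstacle.} The delicate point is step (3): keeping careful track of which configuration each increment is evaluated at, and verifying that after summing over the (random, configuration-dependent) choice of path the increments reorganize \emph{exactly} into $\sum_{d_1(x,y)=1}\mu(\id_{\{\o_x\in G_1,\o_y\in G_2\}}[f(\Phi^{(x)}(\o))-f(\o)]^2)$ without spurious constraints. The subtlety is that along the surgery the sites \emph{ahead} on the path are still good (not yet modified) while the site \emph{behind} is already super-good, so the increment at $x$ genuinely occurs in a configuration where $x$ is good and its predecessor $y$ is super-good — matching the indicator $\id_{\{\o_x\in G_1,\o_y\in G_2\}}$ after we relabel and use that $\mu$ restricted to $Ext_0$ is the push-forward under $\Phi^{(\cdot)}$ up to the factor $\l_\Phi$. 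Handling the order of directions (so that modifications in direction $i$ do not disturb the super-good vertices already planted in directions $1,\dots,i-1$, which is guaranteed since all paths avoid the origin but the planted vertices $\vec e_1,\dots,\vec e_{i-1}$ could a priori lie on $\g^{(i)}$) requires a short argument: either choose the paths $\g^{(i)}$ to avoid $\{\vec e_j: j\ne i\}$ (possible up to enlarging lengths by a bounded amount), or note that passing through an already-super-good vertex only helps. Once this bookkeeping is done, the powers of $p_2^{-1}$ and $\l_\Phi$ combine into $(\l_\Phi p_2^{-2})^d$ and the proof is complete.
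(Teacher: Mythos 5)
Your plan follows the paper's proof closely in structure: a canonical (lexicographically first) choice of super-good path per configuration, a telescoping sum over single-site moves of $\Phi$ transporting the super-good state to $\vec e_i$, Cauchy--Schwarz producing a factor $p_2^{-2}$ per direction, a change of variable costing $\l_\Phi$ per direction, and an iteration over the $d$ directions. This is exactly the paper's decomposition based on the identities $\mu(\id_0\var_0(f))=\sum_{\g,n}\mu(\id_{\{\g^*=\g\}}\id_{\{\nu=n\}}\cdots)$ and $F(\o)\le 2F(\Phi^{(1)}(\o))+4\sum_\s\hat\mu(\s)(f(\Phi^{(1)}(\o)\otimes\s)-f(\o\otimes\s))^2$.

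There is one misconception in your step (3) worth correcting. Because $\g^*(\o)$ is a \emph{canonical} choice (the events $\{\g^*=\g,\nu=n\}$ partition the event $\{\id_0=1\}$), going from the sum over edges of $\g^*(\o)$ to the sum over \emph{all} nearest-neighbour pairs in $\L\setminus\{0\}$ is just a monotone enlargement of a sum of nonnegative terms --- there is no many-to-one counting to undo. Your ``number of good paths of length $\le 1/p_2^2$ through a given edge'' would in fact be exponentially large in the length and is certainly not $O(1)$; it simply does not enter the argument. Correspondingly, the hypothesis $(1-p_1)\log(1/p_2)^2\le\d$ is \emph{not} used in the proof of this lemma: it is only needed in Proposition \ref{thm:3} to make the event $\{\id_0=1\}$ overwhelmingly likely, whereas here the surgery is deterministic once the path is fixed. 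Also, the inequality $\var_0(f)\le\mu_0((f-g)^2)$ as you wrote it is incorrect since your $g$ still depends on $\o_0$; what you need (and what the paper uses) is the local-variance version $F(\o)\le 2F(\Phi^{(1)}(\o))+4\sum_\s\hat\mu(\s)(f(\Phi^{(1)}(\o)\otimes\s)-f(\o\otimes\s))^2$, which splits the constrained variance at the transported configuration from the telescoped increments. With these two points fixed, your plan reproduces the paper's proof.
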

By combining together Lemma \ref{prop:1} and Proposition \ref{thm:3}
we get the statement of the theorem.
\end{proof}
\begin{proof}[Proof of Lemma \ref{prop:1}]
Recall that $\id_0$ is the indicator of the event $\cap_{i\in[d]}SG_i$, where  $SG_i$ is the event that 
there exists a super-good path $\g^{(i)}$ in $\bbZ^2\setminus \{0\}$
of length at most $L\equiv
 1/p_2^2$ starting at $\vec e_i$. Clearly $SG_i$ is identical to the event
 that there exists $\g=(x^{(1)},\dots, x^{(L)})\subset
 \bbZ^2\setminus \{0\} $, such that:
 \begin{itemize}
 \item each vertex $x^{(j)}$ appears
 exactly once (\ie the path is simple) and $x^{(1)}= \vec e_i$, 
\item there
 exist $n\le L$ such that $x^{(n)}$ is super-good, 
\item all the vertices $x^{(j)}$ with $j\leq n$ are good.
 \end{itemize}
Fix $i=1$ and let us order in some way the set $\cP$ of simple paths in
$\bbZ^d\setminus \{0\}$ of length $L$ starting at $\vec e_1$. For any
$\o \in \cap_{i\in [d]}SG_i$ let $\g^*$ be the smallest path in $\cP$
satisfying the above set of conditions and let $\nu=\nu(\o)$ be the index of the first super-good vertex in $\g^*$. Thus
\begin{gather}
  \mu \Bigl(\id_0 \var_0(f)\Bigr)=
  \sum_{\g\in \cP}\sum_{n=1}^{L}\mu\Bigl(\id_{\{\g^*=\g\}} \id_{\{\nu=n\}}
    \prod_{j=2}^d\id_{\{SG_{j}\}}\  F\Bigr),
\label{eq:10bis}
\end{gather}
where 
\[
F(\o):= \var_0(f)(\o)=\frac 12 \sum_{\s,\s'\in S}\hat \mu(\s)\hat \mu(\s')\left(f(\o\otimes\s)-f(\o\otimes \s') \right)^2, 
\]
where the notation $\o\otimes\s$ denotes the configuration equal to $\s$ at $x=0$ and equal to 
$\o$ elsewhere.

Given $\g=(x^{(1)},\dots,x^{(L)})\in \cP$ and $n\le L$ together
with $\o\in \prod_{j\in [d]}SG_j$ such that $\g^*(\o)=\g$ and $\nu(\o)=n$, let $\Phi^{(i)}(\o)$ be
given by (recall \eqref{eq:12})
\begin{equation*}
 \Phi^{(i)}(\o)=
  \begin{cases}
 \Phi^{(x^{(i)})}(\o)& \text{if $i\le n-1$}\\ 
\o& \text{if $i=n$}
  \end{cases}
\end{equation*}
Thus the mapping $\Phi^{(i)}$, $i\le n-1$, makes the configuration $\o$
super-good in $x^{(i)}$ and leaves it unchanged elsewhere. For $i=n$ the
mapping $\Phi^{(n)}$ is the identity. With the above notation and using the Cauchy-Schwartz inequality we get
\begin{gather}
F(\o)\le 2F(\Phi^{(1)}(\o))+4 \sum_{\s\in S}\hat \mu(\s)\left( f(\Phi^{(1)}(\o)\otimes\s)-f(\o\otimes \s) \right)^2.
\label{eq:11}
\end{gather}
The first term in the r.h.s. of \eqref{eq:11} gives a contribution to
the r.h.s of \eqref{eq:10bis} not larger than
\begin{align}
2 \l_\Phi \mu\Bigl(\id_{\{\o_{\vec e_1}\in G_2\}}\prod_{j=2}^d\id_{\{SG_{j}\}}\var_0(f)\Bigr).
\label{eq:12tris}
\end{align}
Above, after the change of variable $\eta:=\Phi^{(1)}(\o)$, we used \eqref{eq:10} together with the obvious facts that
$\eta$ is super-good at $\vec e_1$ and it belongs to
$\prod_{j=2}^d\id_{SG_{j}}$. 

In order to bound from above the contribution of the second term in the
r.h.s. of \eqref{eq:11} we write
\begin{align}
 \left(f(\Phi^{(1)}(\o)\otimes\s)-f(\o\otimes \s)\right)^2
  &=\left(\sum_{i=1}^{n-1} \left[f(\Phi^{(i+1)}(\o)\otimes\s)-f(\Phi^{(i)}(\o)\otimes
  \s)\right]\right)^2 \nonumber \\
&\le (n-1)\sum_{i=1}^{n-1} \left[f(\Phi^{(i+1)}(\o)\otimes\s)-f(\Phi^{(i)}(\o)\otimes
  \s)\right]^2\nonumber \\
& \le L\sum_{i=1}^{n-1} \left[f(\Phi^{(i+1)}(\o)\otimes\s)-f(\Phi^{(i)}(\o)\otimes
  \s)\right]^2.
\label{eq:12bis}
\end{align}
In turn each summand is bounded from above by
\begin{gather*}
2 \left[f(\Phi^{(i+1)}(\Phi^{(i)}(\o))\otimes
  \s)-f(\Phi^{(i)}(\o)\otimes
  \s)\right]^2 + 2\left[f(\Phi^{(i+1)}(\o)\otimes\s)-f(\Phi^{(i+1)}(\Phi^{(i)}(\o))\otimes
  \s)\right]^2.  
\end{gather*}
Using the fact that
$\Phi^{(i+1)}(\Phi^{(i)}(\o))=\Phi^{(i)}(\Phi^{(i+1)}(\o))$, we see
that both terms in the r.h.s. above have a similar structure. We will
therefore treat explicitly only the first one. Recalling that $\L$ is the box centered at the origin with side $2\lfloor\, 1/p_2^2\, \rfloor$, we get
\begin{gather*}
 2L\mu\Big(\sum_{\g\in
   \cP}\sum_{n=1}^{L}\sum_{i=1}^{n-1}\id_{\{\g^*=\g\}}
 \id_{\{\nu=n\}}
\prod_{j=2}^d\id_{\{SG_j\}}\times \\\times \sum_{\s\in  S}\hat\mu(\s) \left[f(\Phi^{(i+1)}(\Phi^{(i)}(\o))\otimes
  \s)-f(\Phi^{(i)}(\o)\otimes
  \s)\right]^2\Bigr)\\
= 
2L \mu\Bigr(\id_{SG_1}\prod_{j=2}^d\id_{\{SG_j\}}\sum_{i=1}^{\nu-1}\left[f(\Phi^{(i+1)}(\Phi^{(i)}(\o))\otimes
  \s)-f(\Phi^{(i)}(\o)\otimes
  \s)\right]^2\Bigr)
\\
\le 2L \sumtwo{x,y\in \L\setminus
  \{0\}}{d_1(x,y)=1}\mu\Bigl(\id_{\{\o_x\in G_1,\ \o_y\in G_1\}}\left[f(\Phi^{(x)}(\Phi^{(y)}(\o))\otimes
  \s)-f(\Phi^{(y)}(\o)\otimes
  \s)\right]^2\Bigr).
\end{gather*}
After the change of variable $\eta\equiv \Phi^{(y)}(\o)$ inside the
expectation, the above quantity can be bounded
from above by 
\[
2L \l_\Phi \sumtwo{x,y\in \L\setminus
  \{0\}}{d_1(x,y)=1}\mu\Bigr(\id_{\{\eta_x\in G_1, \eta_y\in G_2\}}\left[f(\Phi^{(x)}(\eta))\otimes
  \s)-f(\eta\otimes
  \s)\right]^2\Bigr). 
\]
Putting all together we get that there exist a constant $c>0$ such
that
\begin{gather*}
  \mu \Bigl(\id_0 \var_0(f)\Bigr) 
\le c \l_\Phi p_2^{-2} \Bigl[\mu\Bigl(\id_{\{\o_{\vec e_1}\in
      G_2\}}\prod_{j=2}^d\id_{\{SG_{j}\}}\var_0(f)\Bigr) \\
+\sumtwo{x,y\in \L\setminus
  \{0\}}{d_1(x,y)=1}\mu\Bigl(\id_{\{\eta_x\in G_1, \eta_y\in G_2\}}\left[f(\Phi^{(x)}(\eta))\otimes
  \s)-f(\eta\otimes
  \s)\right]^2\Bigr)\Bigr].
\end{gather*}
We can now analyse the first term inside the above square bracket by
repeating the above analysis for the second direction. In $d-1$ steps
the proof is complete.
\end{proof}
\subsection{A canonical-paths bound on the r.h.s. of
  (\ref{eq:9bis}).}
\label{canopaths}
In this section we proceed by analysing the r.h.s. of
\eqref{eq:9bis} in the special case in which $S=\{0,1\}^V$,
$V=\prod_{i=1}^d[n_i]$ for some integers $\{n_i\}_{i=1}^d$,
and $\hat \mu$ is the Bernoulli(p) product measure. We will write
$|V|$ for the cardinality of $V$. In this setting
the probability space $(S^{\bbZ^d},\mu)$ becomes isomorphic to
$(\O,\mu)$ where $\O=\{0,1\}^{\bbZ^d}$ and $\mu$ is the
Bernoulli(p) product measure. It is therefore convenient to do a relabelling
of the variables $\o\in S^{\bbZ^d}$ as
follows. 

Let $\bbZ^d(\vec n)$ be the renormalised lattice
$\otimes_{i=1}^d (n_i\bbZ)$ and let, for $x\in \bbZ^d(\vec n)$,
$V_x:=V+x$. We will 
write $x\sim y$ iff $x,y$ are nearest neighbors in the renormalised
lattice $\bbZ^d(\vec n)$.
The old
``block'' variable $\o_x\in S$ associated to $V_x$ is renamed as $\o_{V_x}=\{\o_y\}_{y\in
  V_x}$  with now $\o_y\in \{0,1\}$ for all $y$'s. In particular the local variance term $\var_x(f)$ appearing in the r.h.s. of \eqref{eq:9bis} becomes $\var_{V_x}(f)$. Accordingly we rewrite the mapping
$\Phi^{(x)}$, $x\in \bbZ^d(\vec n)$, as $\Phi^{(V_x)}$. 

In order to formulate our bounds we need to define the \emph{canonical
  paths} (cf. e.g. \cite{Saloff}).
\begin{definition}[canonical-paths]
Let $\o,\o'\in \O$  be two configurations which differ in finitely many
vertices. We say that $\G_{\o,\o'}\equiv (\o^{(1)},\o^{(2)},\dots, \o^{(k)}) $ is a
canonical-path between $\o,\o'$  if (i) $\o^{(1)} =\o,\ \o^{(k)}=\o'$,
(ii) $\o^{(i)}\neq \o^{(j)}$ for all $i\neq j$ (no loops) and (iii) for
any $i\in [k-1]$ the configuration $\o^{(i+1)}$ is
  obtained from $\o^{(i)}$ by a single spin flip. The integer $k$ will
  be referred to as the length of the path. 
\end{definition}
The bounds on the individual terms in the r.h.s. of \eqref{eq:9bis} are then as
follows.
\begin{lemma}
\label{A}
We assume that, for any $x\in \bbZ^d(\vec n)$, any $z\in V_x$ and any $\o$ such that $\o_{V_x+\vec
  e_i}\in G_2$ for all $i\in [d]\,$, a canonical-path
$\G_{\o,\o^z}$ has been defined such that a generic transition in the path consists of
a spin flip in $V_x\cup (\cup_{i=1}^d \{V_x+\vec e_i\})$. Let 
\[
\rho_A = \sup_{x\in \bbZ^d(\vec n)}\max_{z\in V_x}\sup_{\o'} \sumtwo{\o:\ \o_{V_x+\vec e_i}\in
  G_2,\,\forall i\in [d]}{\o'\in \G_{\o,\o^z}} \frac{\mu(\o)}{\mu(\o')}.
\]
be the \emph{congestion constant} of the family of canonical-paths and let $N_A$ be their maximal length.
Then 
\[
\sum_{x\in \bbZ^d(\vec n)}\mu\Bigl(\Bigl[\ \prod_{i\in [d]}\id_{\{\o_{V_x+\vec e_i}\in
      G_2\}}\Bigr]\var_{V_x}(f)\Bigr) 
\le c \rho_A N_A |V|^2\sum_{y\in \bbZ^d} \mu\bigl(\id^A_{y}(\o)\var_y(f)\bigr), 
\]
for a numerical constant $c>0$, where $\id^A_y(\o)$ is the indicator of the event that there
exists $x\in \bbZ^d(\vec n)$, $z\in V_x$ and $\bar \o$ such that $\bar
\o_{V_x+\vec e_i}\in
  G_2,\,\forall i\in [d]$ and the pair
$(\o,\o^y)$ form a transition of the canonical-path
between $\bar \o$ and $\bar \o^{z}$.
\end{lemma}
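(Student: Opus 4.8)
The plan is to prove Lemma \ref{A} by a standard canonical-paths (congestion) argument, implemented at the level of the renormalised lattice $\bbZ^d(\vec n)$. Fix $x\in \bbZ^d(\vec n)$ and $z\in V_x$. On the event $\{\o_{V_x+\vec e_i}\in G_2\ \forall i\in[d]\}$ we may replace the local variance $\var_{V_x}(f)$ by the ``particle'' expression
\[
\var_{V_x}(f)=\tfrac12\sum_{\o,\,\o'}\mu_{V_x}(\o)\mu_{V_x}(\o')\bigl(f(\o)-f(\o')\bigr)^2,
\]
and since any two configurations that agree outside $V_x$ differ in at most $|V|$ sites, it suffices to control terms of the form $\mu\bigl(\prod_i\id_{\{\o_{V_x+\vec e_i}\in G_2\}}(f(\o)-f(\o^z))^2\bigr)$ for a single $z\in V_x$; the extra factor $|V|^2$ in the statement absorbs the sum over the (at most $|V|^2$) pairs of endpoints inside $V_x$.

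Next I would insert the chosen canonical path. Writing $f(\o)-f(\o^z)=\sum_{j=1}^{k-1}\bigl(f(\o^{(j+1)})-f(\o^{(j)})\bigr)$ along $\G_{\o,\o^z}=(\o^{(1)},\dots,\o^{(k)})$ and applying Cauchy--Schwarz with $k-1\le N_A$, we get
\[
\bigl(f(\o)-f(\o^z)\bigr)^2\le N_A\sum_{j=1}^{k-1}\bigl(f(\o^{(j+1)})-f(\o^{(j)})\bigr)^2.
\]
Multiplying by the weight $\mu(\o)\prod_i\id_{\{\o_{V_x+\vec e_i}\in G_2\}}$, summing over $\o$ and over $x\in\bbZ^d(\vec n)$, and then exchanging the order of summation so that the sum is organised by the transition $(\o^{(j)},\o^{(j+1)})=:(\eta,\eta^y)$ rather than by the endpoint $\o$, the coefficient in front of $\mu(\eta)\bigl(f(\eta)-f(\eta^y)\bigr)^2$ becomes exactly
\[
\sum_{\o:\ \eta^{(j)}=\eta\ \text{on the path }\G_{\o,\o^z}}\frac{\mu(\o)}{\mu(\eta)}\ \le\ \rho_A,
\]
by the definition of the congestion constant $\rho_A$ (the crucial point being that the path only flips spins in $V_x\cup(\cup_i\{V_x+\vec e_i\})$, so a given transition $(\eta,\eta^y)$ constrains $\o$ enough for the weighted preimage count to be finite and bounded by $\rho_A$). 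Collecting the constants $N_A$, $\rho_A$, the $|V|^2$ from the endpoint pairs, and a numerical constant, and recognising that $\tfrac12\mu(\eta)\bigl(f(\eta)-f(\eta^y)\bigr)^2$ summed over $\eta$ is precisely $\mu\bigl(\id^A_y(\o)\var_y(f)\bigr)$ with $\id^A_y$ the indicator described in the statement, yields the claimed inequality.

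The main obstacle, and the step deserving the most care, is the bookkeeping in the change of summation order: one must check that for each transition $(\eta,\eta^y)$ the pair $(x,z)$ and the source $\bar\o$ from which $\eta$ arises as an intermediate configuration is sufficiently pinned down — in particular that $y$ lies in $V_x\cup(\cup_i\{V_x+\vec e_i\})$ and that the number of admissible $(x,z,\bar\o)$ producing the same transition, weighted by $\mu(\bar\o)/\mu(\eta)$, is bounded by $\rho_A$ (times harmless combinatorial factors already accounted for). This is exactly where the hypothesis that a generic transition consists of a spin flip in $V_x\cup(\cup_{i=1}^d\{V_x+\vec e_i\})$ is used, and where one sees that the definition of $\id^A_y$ has been tailored so that the final right-hand side is a genuine Dirichlet-type term.
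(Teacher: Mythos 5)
Your proposal follows essentially the same route as the paper's proof: reduce the block variance $\var_{V_x}(f)$ to single-spin-flip Dirichlet terms, insert the given canonical path $\G_{\o,\o^z}$, apply Cauchy--Schwarz along the path, then exchange the order of summation and identify the resulting multiplicity with the congestion constant $\rho_A$. The final change-of-variables step and the role of the hypothesis on the support of the transitions are correctly identified and correctly argued.

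The one place where your argument is imprecise is the initial reduction. The paper simply invokes the tensorization inequality
\[
\var_{V_x}(f)\le\sum_{z\in V_x}\mu_{V_x}\bigl(\var_z(f)\bigr),
\]
which immediately produces a single-flip Dirichlet form with one clean factor of $|V|$; indeed, following the paper's proof through one finds that a single factor of $|V|$ (from the sum over $z\in V_x$) together with an $O(d)$ factor (from the overcounting of $x$'s when summing $y$ over $V_x\cup(\cup_i\{V_x+\vec e_i\})$) already suffices, so the stated $|V|^2$ has a factor to spare. Your description — writing $\var_{V_x}(f)$ as a double sum over pairs $(\s,\s')$ and then arguing that ``the extra factor $|V|^2$ absorbs the sum over pairs of endpoints'' — is not a clean derivation as stated. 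If you actually try to telescope $f(\o\otimes\s)-f(\o\otimes\s')$ through intermediate single-flip configurations, those intermediate configurations carry their own $\mu$-weight, which is not simply $\mu_{V_x}(\s)\mu_{V_x}(\s')$, so there is weight bookkeeping left unaddressed. This is exactly the issue that tensorization sidesteps, and it is the one genuine gap in the write-up; once it is replaced by tensorization the rest of your argument goes through and matches the paper.
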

   \begin{lemma}
 \label{B}
We assume that, for any $x\sim y$ and any $\o\in \O$ such that
 $\o_{V_x}\in G_1$ and $\o_{V_y}\in G_2$, a canonical-path
between $\o$ and $\Phi^{(V_x)}(\o)$ has been defined such that a generic transition in the path consists of
a spin flip in $V_x\cup V_y$. Let
  \begin{equation*}
\rho_B= \sup_{\o'}\sup_{x\sim y}\ \sumtwo{\o:\ \o_{V_x}\in G_1,\, \o_{V_y}\in
  G_2}{\o'\in \G_{\o,\Phi^{(V_x)}(\o)}} \frac{\mu(\o)}{\mu(\o')}    
  \end{equation*} 
and let $N_B$ be the maximal length of the paths. 
Then
\begin{gather*}
\sum_{x\sim y}\mu\Bigl(\id_{\{\o_{V_x}\in G_1, \o_{V_y}\in
  G_2\}}\left[f(\Phi^{(V_x)}(\o))-f(\o)\right]^2\Bigr)\Bigr]\\
\le c\rho_B N_B \,|V|\sum_{z\in \bbZ^d} \mu\Bigl(\id^B_z(\o)\var_z(f)\Bigr)
\end{gather*}
for a numerical constant $c>0$, where $\id^B_z(\o)$ is the indicator of the event that there
exists $x\sim y$ and $\o'$ such that $\o'_{V_x}\in G_1,\ \o'_{V_y}\in
G_2$ and the pair
$(\o,\o^z)$ form a transition of the canonical-path between $\o'$
and $\Phi^{(V_x)}(\o')$.
  \end{lemma}
The proof of the above two lemmas is practically identical so we only
prove the first one.  
\begin{proof}[Proof of Lemma \ref{A}]
The starting inequality is 
\[
\var_{V_x}(f)\le\sum_{z\in V_x}\mu_{V_x}(\var_z(f)).
\]   
For simplicity in the sequel we assume $x=0$. Given $\o$ such that $\o_{V+\vec e_i}\in
  G_2\,\forall i\in [d]$ and $z\in V$, let $\G_{\o,\o^z}=(\o^{(1)},\o^{(2)},\dots, \o^{(k)})$ be
  the corresponding canonical-path. Then 
\[
\var_z(f)(\o)= p(1-p)[f(\o^z)-f(\o)]^2\le  p(1-p)k \sum_{j=1}^k [f(\o^{(i+1)})-f(\o^{(i)})]^2,
\]
so that
\begin{gather*}
  \mu\Bigl(\id_{\{\o_{V+\vec e_i}\in
  G_2\,\forall i\in [d]\}}\var_z(f)\Bigr) 
\le N_A p(1-p)\mu\Bigl(\sum_{i=1}^{k-1}\left[f(\o^{(i+1)})-f(\o^{(i)})\right]^2\Bigr)
\\
\le c\rho_A N_A\sum_{y\in V\cup
  (\cup_{i=1}^dV+\vec e_i)}\mu\Bigl(\id^A_y(\o)\var_y(f)\Bigr),
\end{gather*}
 where $\id^A_y(\o)$ is as in the statement and, after the change of
 variables $\o=\o^{(i)}$, we used the definition of
 $\rho_A$ to bound the relative density between $\o^{(i)}$
 and $\o$. The statement of the lemma now follows at once.
\end{proof}
For future purpose we summarise the conclusion of our bounds.  
\begin{corollary}
    \label{rhs 9bis}
Under the same assumptions as in Lemmas \ref{A} and \ref{B}
\begin{gather*}
\var(f)\le c\, ( \l_\Phi p^{-4}_2)^d\ \Bigl[\rho_AN_A|V|^2 \sum_z \mu\Bigl(\id^A_z(\o)\var_z(f)\Bigr)\\  + \rho_BN_B|V| \sum_z \mu\Bigl(\id^B_z(\o)\var_z(f)\Bigr) \Bigr]
\end{gather*}
  \end{corollary}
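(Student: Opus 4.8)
The plan is to simply chain together the three results already in hand: Theorem~\ref{thm:appl} (equivalently, inequality \eqref{eq:9bis}) and the two bounds of Lemmas~\ref{A} and \ref{B}. Concretely, I would start from the conclusion of Theorem~\ref{thm:appl}, which bounds $\var(f)$ by $c(\l_\Phi p_2^{-4})^d$ times the sum of two terms: the constrained Dirichlet form $\sum_x \mu\bigl(\bigl[\prod_{i\in[d]}\id_{\{\o_{x+\vec e_i}\in G_2\}}\bigr]\var_x(f)\bigr)$ and the ``transport'' term $\sum_{x,y:d_1(x,y)=1}\mu\bigl(\id_{\{\o_x\in G_1,\o_y\in G_2\}}[f(\Phi^{(x)}(\o))-f(\o)]^2\bigr)$. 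After the relabelling of variables described in Section~\ref{canopaths}, these become exactly the two quantities appearing on the left-hand sides of Lemmas~\ref{A} and \ref{B}, with $x$ now ranging over the renormalised lattice $\bbZ^d(\vec n)$ and $\var_x(f)$ replaced by $\var_{V_x}(f)$.

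The second step is to substitute the bound from Lemma~\ref{A} into the first term and the bound from Lemma~\ref{B} into the second term. Lemma~\ref{A} replaces the first term by $c\,\rho_A N_A |V|^2 \sum_z \mu\bigl(\id^A_z(\o)\var_z(f)\bigr)$, and Lemma~\ref{B} replaces the second by $c\,\rho_B N_B |V| \sum_z \mu\bigl(\id^B_z(\o)\var_z(f)\bigr)$. Plugging both into the inequality from Theorem~\ref{thm:appl} and absorbing all the numerical constants into a single $c>0$ gives
\begin{gather*}
\var(f)\le c\,(\l_\Phi p_2^{-4})^d\ \Bigl[\rho_A N_A |V|^2 \sum_z \mu\bigl(\id^A_z(\o)\var_z(f)\bigr) + \rho_B N_B |V| \sum_z \mu\bigl(\id^B_z(\o)\var_z(f)\bigr)\Bigr],
\end{gather*}
which is precisely the assertion of Corollary~\ref{rhs 9bis}.

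There is no real obstacle here: the corollary is a bookkeeping statement that packages the three preceding results into a single inequality convenient for the model-by-model applications in Section~\ref{proofmainthm}. The one point requiring a little care is checking that the hypotheses of Lemmas~\ref{A} and \ref{B} (existence of the families of canonical paths with transitions supported in $V_x\cup(\cup_i\{V_x+\vec e_i\})$, respectively $V_x\cup V_y$) are exactly what is assumed in the statement of the corollary — this is why the corollary is stated ``in the same assumptions of Lemmas~\ref{A} and \ref{B}'' — and that the parameter ranges on $p_1,p_2$ demanded by Theorem~\ref{thm:appl}, namely $\max(p_2,(1-p_1)\log(1/p_2)^2)\le\d$, are in force. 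Granting these, the proof is a one-line composition, and I would present it as such.
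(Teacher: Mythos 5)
Your proof is exactly what the paper intends: the corollary is explicitly introduced with the phrase ``For future purpose we summarise the conclusion of our bounds,'' and its content is precisely the composition of Theorem~\ref{thm:appl} (after the block-relabelling of Section~\ref{canopaths}) with Lemmas~\ref{A} and~\ref{B}. Your write-up makes the bookkeeping explicit and matches the paper's (implicit) argument.
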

  \begin{remark}
\label{rem:PI}
In the application to KCM the choice of the canonical-paths entering in the above corollary will always be such that  
$\max\left(\id^A_z(\o),\id^B_z(\o)\right)\le c_z(\o),$
where $c_z$ is the constraint of the KCM at $z\in \bbZ^d$.
Thus in
this case the conclusion of the Corollary implies a Poincar\'e inequality $\var(f)\le C\cD(f)$, where $\cD(f)=\sum_z\mu(c_z\var_z(f))$ is the Dirichlet form of the KCM (cf. Remark \ref{rem:PI-gap}) and $C$ satisfies
\[
C\le c\, ( \l_\Phi p^{-4}_2)^d\Bigl(\rho_AN_A|V|^2 +\rho_BN_B|V|\Bigr).
\]   
  \end{remark}
\section{Application to specific KCM models}
\label{sec:models}
 In this section we begin by recalling the definition of the
 Fredrickson-Andersen constrained
spin models with $k$-facilitation (FA-kf in the sequel) introduced by H.C. Andersen and
G.H. Fredrikson in \cite{FH} and of the GG-KCM. As
it will be clear in a moment, 
the FA-kf models are 
closely related to the so-called $k$-neighbour model in bootstrap
percolation, while the GG-KCM model is related to the anisotropic
bootstrap percolation model introduced by Gravner-Griffeath \cites{GG2,GG}. As such, the dynamical properties of both models near the
ergodicity threshold are intimately
related to the scaling properties of the corresponding bootstrap percolation models
in the same regime. Finally we state our main result relating the
persistence time with the critical bootstrap percolation length. 
This will be proven in section 5 using Corollary \ref{rhs 9bis}. The key step will consist in finding suitable (\ie depending on the specific choice of the constraints) good and super-good events $G_1,G_2$, map $\phi$ and canonical-paths.

\subsubsection{The models}\label{sec:models} We will work with the probability space
$(\O,\mu)$ where $\O=\{0,1\}^{\bbZ^d}$ and $\mu$ is the product
Bernoulli($p$) and we will be interested in the asymptotic regime
$q\downarrow 0$ where $q=1-p$. A generic \emph{kinetically
  constrained model} (KCM in the sequel) is a particular interacting particle
system, \ie a Markov process on $\O$, described by the Markov
generator 
\[
(\cL f)(\o)= \sum_{x\in \bbZ^d}c_x(\o)\bigl(\mu_x(f)-f\bigr)(\o),
\] 
where $\mu_x(f)$ is the Bernoulli($p$)-average of $f(\o)$ w.r.t. to
the variable $\o_x$. The constraints $\{c_x\}_{x\in \bbZ^d}$ are  
defined as follows. Let $\cU=\{U_1,\dots,U_m\}$ be a finite collection of finite subsets
of $\bbZ^d\setminus\{0\}$. We call $\cU$ the \emph{update family} of the process and each $X\in \cU$ an \emph{update rule}. Then $c_x$ is the indicator function of the
event that there exists an update rule $X\in \cU$ such that $\o_y=0\
\forall y\in X +x$. We emphasise that we do not assume that the
constraints satisfy the exterior property of Section \ref{sec:constraints}. 
Using these assumptions it is easy to check (cf. \cite{CMRT} for a
detailed analysis) that $\cL$ becomes the generator of a 
reversible Markov process on $\O$, with reversible measure $\mu$. 

In the FA-kf model one takes as $\cU$ the family of $k$-subsets of the
set of nearest neighbors of the origin. In the GG-KCM model in two
dimensions one takes 
$\cU$ as the family of $3$-subsets of the
set of nearest neighbors of the origin together with the vertices $\{\pm
2\vec e_1\}$. In the terminology of bootstrap percolation
(see e.g. \cite{BD-CMS}  and the recent survey \cite{Robsurvey})
the  update family of FA-kf for $k\in[2,d]$ belongs to the class of \emph{critical balanced models}
and the update family of GG-KCM is \emph{critical unbalanced}. Such a difference will appear clearly in the sequel.  
\begin{remark}
Given a KCM with update family $\cU$, we will sometimes refer to the
corresponding bootstrap percolation process as the monotone process
defined in \eqref{eq:2} using the same update rules of the KCM.
\end{remark}
We now define the two main quantities characterising a
KCM. 
\begin{definition}
\label{def:relax}The relaxation time $\trel(q;\cU)$ of the generator
$\cL$ is the best constant $C$ in the Poincar\'e inequality 
\begin{equation}
  \label{eq:gap}
\var(f)\le C\cD(f) \quad \text{ for all local} \ f,
\end{equation}
where $\cD(f)=\sum_x \mu\bigl(c_x \var_x(f)\bigr)$ is the Dirichlet form associated to $\cL$.
\end{definition}
A finite relaxation time implies  (see e.g. \cite{Liggett1}) that the reversible measure $\mu$ is mixing for the
semigroup $P_t$ with exponentially decaying time auto-correlations,
 $$\var\left(e^{t\cL} f\right)\leq e^{-t/\trel}\var(f),\qquad f\in L^2(\mu).$$
The second (random) quantity is the first time the spin at the origin reaches the zero
state:  
\[
\t_0=\inf\{t\ge 0:\ \o_0(t)=0\}.
\] 
In the physics literature the hitting time $\t_0$ is usually referred
to as the \emph{persistence time}, while, in the bootstrap
percolation framework, it would be more conveniently dubbed  
\emph{infection time}. 
In \cite{Praga}*{Theorem 4.7} it was proved that 
\[
\bbP_\mu(\t_0\ge t) \le \exp\bigl(-q\, t/\trel(q;\cU)\bigr),
\] 
implying 
\[
\bbE_\mu(\t_0)\le \trel(q;\cU)/q.
\] 
A matching lower bound in terms of $\trel(q;\cU)$
is missing and we have instead the following result whose proof is
deferred to the appendix. Recall that $\t_{BP}$ is the infection time
of the origin for the corresponding bootstrap percolation process. 
\begin{lemma}
\label{LEM:PARIS}
There exists
$\d=\d(\cU)\in (0,1)$ such that, for all $q$ small enough,
\begin{equation*}
\bbE_\mu(\t_0)\ge \d \mu(\t_{BP})\ge \frac{\d}{2} T_c(q;\cU).
\end{equation*}  
\end{lemma}
One of the main results of \cite{CMRT} states that all the KCM with update family
$\cU$ such that $q_c(\cU)=0$ have a finite relaxation time $\trel$ and
thus a finite mean infection time $\bbE_\mu(\t_0)$.
In particular the above
holds for the FA-kf for $k\in[2,d]$ model and the GG-KCM 
and our main aim is to compute the rate at
which $\trel$ and $\bbE_\mu(\t_0)$ diverge as $q\to 0$ in both cases. 
\subsection{Main result}
We begin to recall what is known on 
the asymptotic scaling as $q\to 0$ of the critical length $L_c(q;\cU)$ defined in
\eqref{eq:3} and the relaxation time
$\trel(q;\cU)$, when the update family $\cU$ is that of the
FA-kf and the GG-KCM models. 

For the update family $\cU$ of the FA-kf model on $\bbZ^d,$ it was proved in \cite{BBD-CM} (see the
introduction there for a short account of previous relevant results)
that for any $d\ge k\ge 2$ there
exists an explicit constant $\l(d,k)$ such that
\begin{equation}
  \label{eq:Lc}
L_c(q;k,d)\equiv L_c(q;\cU)=\exp_{(k-1)}\Bigl(\frac{\l(d,k)+o(1)}{q^{1/(d-k+1)}}\Bigr),
\end{equation}
where $\exp_{(r)}$ denotes the $r$-times iterated exponential,
$\exp_{(r+1)}(x)=\exp(\exp_{(r)}(x))$.
For the case of the GG-KCM, it was established in \cite{DC-Enter} (see also \cite{DPEH}
for a detailed analysis of the $o(1)$ term below)
that instead 
    \[L_c(q;\cU)=\exp\Bigl(\frac{(\log(1/q))^2}{12
      q}(1+o(1))\Bigr).\]
As far as the asymptotic
behaviour $\trel(q;\cU)$ as $q\to 0$ is concerned, only the FA-kf model has been
considered so far and the following bounds have been proved in \cite{CMRT}. There
exists $c>0$ such that
\begin{align*}
  L_c(q;\cU)^{1-o(1)}\le &\trel(q;\cU)\le \exp\bigl(c/q^5\bigr)
                    \hskip 1cm &d=k=2,\\
  L_c(q;\cU)^{1-o(1)}\le &\trel(q;\cU)\le \exp_{(d-1)}\bigl(c/q\bigr)
                    \hskip 1cm &d\ge 3, \ k\le d.
\end{align*}
Notice that the above upperbounds are very far from $L_c(q;\cU)$.  
In conclusion, while the control of the critical length $L_c(q;\cU)$
is rather sharp, the relaxation time $\trel(q;\cU)$ and the mean
hitting time
$\bbE_\mu(\t_0)$ are still poorly controlled. The main outcome of the
theorem below is a much tighter connection between $\trel(q;\cU)$, and
therefore $\bbE_\mu(\t_0)$, and $L_c(q;\cU)$.
\begin{theorem}
 \label{thm:main1}
For the FA-2f model in $\bbZ^d$ and the GG-KCM  model there
exists $\a>0$ such that  
\begin{equation}
  \label{main1:1}
\trel(q;\cU)=O\Bigl(L_c(q;\cU)^{\log(1/q)^\a}\Bigr).
\end{equation}
For FA-kf model in $\bbZ^d$ with $3\le k\le d$ there exists $c>\l(d,k)$ such that
\begin{equation}
  \label{main1:2}
\trel(q;\cU) \le \exp_{(k-1)}\bigl(c/q^{1/(d-k+1)}\bigr).
\end{equation}
\end{theorem}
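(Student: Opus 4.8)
The plan is to read Theorem \ref{thm:main1} off Corollary \ref{rhs 9bis} together with Remark \ref{rem:PI}. For each model we must produce four ingredients: a renormalisation block $V$; a pair of single-block events $G_1\supset G_2$ (the \emph{good} and \emph{super-good} events, with $p_j=\hat\mu(G_j)$); a map $G_1\overset{\Phi}{\mapsto}G_2$; and two families of canonical paths, one for the $A$-transitions (flipping a block when all its forward neighbours are super-good) and one for the $B$-transitions (realising $\Phi^{(V_x)}$ when a forward neighbour is super-good). These must be chosen so that (a) $\max\!\big(p_2,(1-p_1)\log(1/p_2)^2\big)\le\d$ for $q$ small, so that Theorem \ref{thm:appl}/Corollary \ref{rhs 9bis} applies, and (b) every elementary step of every canonical path is a legal flip of the KCM, so that $\id^A_z,\id^B_z\le c_z$ and Remark \ref{rem:PI} yields
\[
\trel(q;\cU)\le c\,(\l_\Phi p_2^{-4})^d\,\max\!\big(\rho_AN_A|V|^2,\ \rho_BN_B|V|\big).
\]
Theorem \ref{thm:main1} then follows by estimating $p_1,p_2,\l_\Phi,\rho_A,\rho_B,N_A,N_B,|V|$ model by model, which is the content of Section \ref{proofmainthm}.

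For the FA-$2$f model (in any dimension) and for the GG model, the super-good event $G_2$ says that the zeros inside $V$ contain a \emph{mobile critical droplet} pinned in a canonical corner of $V$: a configuration which internally spans $V$ in the corresponding bootstrap process and which, moreover, can be transported across $V$ using only legal KCM flips. The side of $V$ is tuned, via the sharp thresholds for $k$-neighbour bootstrap percolation \cite{BBD-CM} and for GG bootstrap percolation \cite{DC-Enter},\cite{BD-CMS}, so that $p_2$ equals a prescribed quantity $L_c(q;\cU)^{-\Theta(\log(1/q)^{\mathrm{const}})}$; this forces $|V|^{1/d}=L_c(q;\cU)^{1+o(1)}$, hence $|V|=L_c^{O(1)}$, while $p_2^{-4d}=L_c^{O(1)}$ is already one source of the $\log(1/q)^\a$ factor in \eqref{main1:1}. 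The good event $G_1$ is the much more likely event that $V$ would be internally spanned with the help of infection on one of its faces, so that $1-p_1$ is exponentially small in a power of $|V|^{1/d}$ and condition (a) holds with room to spare; and $\Phi$ merely repositions the droplet to the canonical corner, changing the number of zeros by at most $\mathrm{poly}(\log(1/q))$, so that $\l_\Phi\le|V|\cdot L_c^{o(1)}=L_c^{O(1)}$.

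The technical heart — and the step I expect to be the main obstacle — is the construction of the two canonical path families and the control of their congestion constants $\rho_A,\rho_B$ and lengths $N_A,N_B$. Moving a critical droplet by one lattice step using FA-$2$f (resp. GG) legal flips is not free: growing the next infected row adjacent to the droplet means bringing zeros in one at a time, and, since a zero can be created only next to the required number of zeros, this is itself an East-type cascade \emph{inside} the block; the auxiliary zeros then have to be erased by the reverse cascade. Bounding $\rho_A,\rho_B$ amounts to controlling, for each directed edge of the configuration graph, how much $\mu$-mass is routed through it over all droplet positions and all cascade states, weighted by relative density — and this, together with $p_2^{-4}$ and $\l_\Phi$, is where the remaining $\log(1/q)^\a$ in \eqref{main1:1} is generated. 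For GG one must additionally respect the anisotropy: the critical droplet is a rectangle of fixed aspect ratio that can only grow in the "easy" directions, so the paths have to be designed accordingly, using the geometric description of GG droplets from \cite{BD-CMS},\cite{DC-Enter}. Since $N_A,N_B$ are at most polynomial in $|V|$ and the remaining factors are $L_c^{O(1)}$, collecting the estimates in the displayed inequality gives \eqref{main1:1}.

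For the FA-$k$f model with $3\le k\le d$ the same machine is run, but now only the \emph{height} of the exponential tower in \eqref{eq:Lc} has to be matched, so a considerably cruder droplet-motion analysis suffices. Here the renormalisation is iterated over the $d$ coordinate directions: a critical droplet for $k$-neighbour bootstrap percolation in $\bbZ^d$ is built by stacking critical droplets for $(k-1)$-neighbour bootstrap percolation in $(d-1)$-dimensional slabs, which recursively reproduces the $(k-1)$-times iterated exponential. Taking $p_2$ a fixed small power of $q$ at the innermost level and propagating the tower through the recursion, one gets $|V|$, $\l_\Phi$, $\rho_A$, $\rho_B$, $N_A$, $N_B$ all bounded by $\exp_{(k-2)}\!\big(\mathrm{poly}(1/q)\big)$, so that $c\,(\l_\Phi p_2^{-4})^d\max\!\big(\rho_AN_A|V|^2,\rho_BN_B|V|\big)\le\exp_{(k-1)}\!\big(c'/q^{1/(d-k+1)}\big)$ for some $c'>\l(d,k)$, which is \eqref{main1:2}.
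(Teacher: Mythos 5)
Your high-level plan is exactly the paper's: instantiate Corollary \ref{rhs 9bis} via Remark \ref{rem:PI} with a block $V$, good/super-good events $G_1\supset G_2$, a map $\Phi$, and two families of legal canonical paths, then estimate $p_1,p_2,\l_\Phi,\rho_A,\rho_B,N_A,N_B,|V|$ model by model. However the scaling you assert for the key quantities is wrong in a way that reveals a misunderstanding of where the $\log(1/q)^\a$ exponent comes from, and that would sink the argument if carried through. You write that tuning $p_2=L_c^{-\Theta(\log(1/q)^{\text{const}})}$ forces $|V|^{1/d}=L_c^{1+o(1)}$ and hence $|V|=L_c^{O(1)}$. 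This is incorrect: with $p_2\gtrsim q^{cn^{d-1}}$ the requirement $\log(1/p_2)=\Theta(\log L_c\cdot\text{polylog}(1/q))$ gives $n^{d-1}=\Theta(\log L_c\cdot\text{polylog}(1/q))$, i.e.\ $|V|$ is \emph{polylogarithmic} in $L_c$ — for FA-2f with $d=2$ one has $n=\Theta(\log(1/q)/q)$ against $\log L_c=\Theta(1/q)$, and similarly $n=A\ell\log\ell$ with $\ell=\log_{\text{(one level)}}L_c$ for $k\ge3$. This is not cosmetic: the congestion constants in Propositions \ref{prop:k>2}, \ref{prop:k=2}, \ref{prop:GG} are of order $(1/q)^{cn^{d-1}}$. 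If $n$ really were $L_c^{1+o(1)}$ the congestion constants would be $\exp\bigl(c L_c^{d-1}\log(1/q)\bigr)$, ruinously larger than $L_c^{\log(1/q)^\a}$; the polylog-in-$L_c$ block size is precisely what saves the bound.

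Relatedly, your bookkeeping of which factors carry the $\log(1/q)^\a$ is internally contradictory: you simultaneously claim $p_2^{-4d}=L_c^{O(1)}$ and $\l_\Phi=L_c^{O(1)}$, and that the $\log(1/q)^\a$ is ``generated'' by the canonical paths. In fact $\l_\Phi\le(2/q)^{O(n^{d-1})}$ and $p_2^{-1}\le q^{-O(n^{d-1})}$ are both already $L_c^{\Theta(\text{polylog}(1/q))}$, on the same footing as $\rho_A,\rho_B$; all of them, not just the path congestion, supply the $\log(1/q)^\a$. Finally, two smaller points on the construction: the paper's $G_1,G_2,\Phi$ for FA-2f and GG are not a generic ``mobile critical droplet pinned in a corner'' but very specific structures (an empty union of edges / cross in each block for FA-2f, two adjacent empty columns for GG, with $G_1$ a pointwise vacancy condition per slice, column or row), and it is exactly these shapes that make the one-step motion Claims \ref{banale}--\ref{claim:cross} and the observations of Section \ref{proofmainthm} (GG) go through with congestion $(1/q)^{O(n^{d-1})}$; and for $k\ge3$ the paper does \emph{not} iterate the renormalisation machinery — there is one renormalisation step, and the tower of exponentials is inherited through the choice $n=A\ell\log\ell$ with $\ell$ the $(k-1)$-neighbour critical length in dimension $d-1$, together with the definition of $G_1$ via $(k-1)$-internal spanning of $(d-1)$-slices.
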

\section{Proof of theorem \ref{thm:main1}}
\label{proofmainthm}
\subsubsection{Reader's guide  and notation} The proof of the theorem uses all the machinery which was developed in
the previous sections. Therefore, for all the above models, the coarse-grained
probability space $(S,\hat \mu)$ (cf. e.g. the beginning of Section
\ref{canopaths}) will be of the form $S=\{0,1\}^{V}$, with
$V=\prod_{i=1}^d[n_i]$ and $\hat \mu$ the product Bernoulli($p$) measure. 

The starting point of the proof is to make an appropriate choice for the value
of $\vec n=(n_1,\dots,n_d)$ together with a working definition of the good and super-good
events $G_1,G_2\subset S$ and of the mapping
$G_1\overset{\Phi}{\mapsto} G_2$ (cf. Section \ref{application}) for
each model. Clearly, in order to apply Theorem \ref{thm:appl} and 
Corollary \ref{rhs 9bis}, our choice of $(\vec n,G_1,G_2)$ must ensure that
the probabilities $p_1=\hat\mu(G_1)$ and $p_2=\hat \mu(G_2)$ satisfy the
basic condition $\lim_{q\to
  0}(1-p_1)\bigl(\log(1/p_2)\bigr)^2=0$ of Theorem \ref{thm:appl}. In the
FA-kf models no direction plays a special role (it is a balanced model
in the language of \cite{Robsurvey}) and therefore we choose $n_i=n$ for all $i\in
[d]$. In the GG-KCM  the above symmetry is broken and we will need
to distinguish between the two directions. This part of the proof is carried out in Part I (see below). The second part of the proof (cf. Part II below) involves defining appropriately the canonical-paths appearing in Lemma \ref{A} and \ref{B} (see also Corollary \ref{rhs 9bis}) and bounding the corresponding length and congestion constants.  

Carrying out the above program could become particularly heavy from a notational point of view. Therefore we will sometimes adopt a more descriptive and informal approach. More specifically, given a configuration $\o\in\{0,1\}^{\bbZ^d}$ and a region $\L\subset \bbZ^d$, we will declare $\L$ \emph{empty (occupied)} if $\o\restriction\L=0\ (1)$. While constructing the canonical-paths appearing in Lemmas \ref{A} and \ref{B} we will say that we \emph{empty (fill)} $\L$ if we flip to $0\, (1)$, one by one according to some preassigned schedule (\ie an ordering of the to-do flips), all the occupied/empty sites of $\L$. It is important to emphasise that the schedules involved in the operations of emptying or filling a region will \emph{always} be such that each spin flip dictated by the schedule will occur while fulfilling the specific constraint of each model. Schedules with this property will be dubbed \emph{legal schedules}. 
A closely related notion is that of \emph{legal canonical-path}.
\begin{definition}
Given a KCM, let $\{c_x\}_{x\in \bbZ^d}$ be the corresponding family of
constraints. A \emph{legal canonical-path} between two configurations
$\o,\o'$ is a canonical-path $\G_{\o,\o'}\equiv
(\o^{(1)},\o^{(2)},\dots, \o^{(m)}) $ with the additional property that
$c_{x^{(i)}}(\omega^{(i)})=1\ \forall i\in [m-1]$, where $\o^x$
denotes the configuration obtained from $\o$ by flipping the value
$\o_x$ and $x^{(i)}$ is the vertex such that $\omega^{(i+1)}=(\omega^{(i)})^{x^{(i)}}$. We say that the canonical-path is \emph{decreasing (increasing)} if for any $i\in [m-1]$ and any $x\in \bbZ^d$ $\o^{(i+1)}_x\le \o^{(i)}_x$ ($\o^{(i+1)}_x\ge \o^{(i)}_x$).   
\end{definition}
Next, we recall the notion of a subset of $\bbZ^d$ being
\emph{internally spanned} which will play a crucial role in the
definition of the good and super-good events for the specific KCM
treated here. 
\begin{definition}[Internally spanned]
Consider a KCM with updating family $\cU$. Given $\L\subset \bbZ^d,$
we will denote by 
$I(\cU,\L)\subset \O_\L$ the event that $\L$ is \emph{$\cU$-internally
spanned}, \ie that
\[
\left[\{x\in \L:\ \o_x=0\}\right]_{\cU}=\L.
\]
When the update family
is that of the FA-kf
model in $d$ dimensions (\ie the update family of $k$-neighbour model), we will sometimes write $I(d,k,\L)$ instead of
$I(\cU,\L)$ and we will say that $\L$ is $k$-internally spanned.  
\end{definition}
We will also need the following result  for $k$-neighbour bootstrap percolation
\begin{lemma}[\cite{CerfManzo} Lemma 4.1]
\label{rem:CerfManzo}
There exists $\epsilon>0$ s.t. for $L\ge C L_c(\epsilon q;\cU), \ L\in \bbN$ and $C$ a
large enough numerical constant,
\begin{equation}
  \label{eq:probspanned}
\mu(\o \in I(d,k,[L]^d))\ge 1- \exp(-L/L_c(\epsilon q;\cU)).
\end{equation}
\end{lemma}
Clearly, for any update family $\cU$, the following holds.  If $\o$ is such that the region $\L$ is $\cU$-internally spanned and $\o'$ is the configuration equal to zero in $\L$ and equal to $\o$ elsewhere, then there exists a legal decreasing canonical-path $\G_{\o,\o'}$ which only uses flips inside $\L$. In particular the length of $\G_{\o,\o'}$ is at most $|\L|$. By reversing the path we get a legal increasing path between $\o'$ and $\o$.

Before starting the actual proof, it will be useful to fix some
additional notation. Given the hypercube $\L=[n]^d$ and $i\in [d]$, we
set $E_i(\L)=\{x\in \L:\ x_j=1,\ j\neq i\}$ and we call it the $i^{th}$-{\sl edge} of $\L$.
Strictly speaking an edge of $V$ is a set of the form $\{x\in V:\ x_j\in\{1,n\}\ \forall j\neq i\}$. Here we will only need edges with one end-point at the vertex $(1,\dots,1)$. Any $(d-1)$-dimensional set of the form $\L\cap \{x:x_i=j\}$, $j\in [n]$,
will be called an $i$-{\sl slice} and it will be denoted by
$Sl_j(\L;i)$. A generic $i$-{\sl frame} $F_j(\L;i)$, $j\in [n]$, is
the $(d-2)$-dimensional subset of $Sl_j(\L;i)$ consisting of the
vertices $x$ such that $x_{k}=1$ for some $k\neq i$. If $\L'= x+\L$
then $E_i(\L')=E_i(\L)+x$ etc. If clear from the context we will drop the specification $\L$ from the notation.

\subsection{Part I}
Here we define the blocks of the coarse-grained
analysis together with the good and super-good events and the mapping
$\Phi$. We do that separately for the FA-kf model and the GG-KCM.
\subsubsection{The FA-kf model with $ k\ge 3$} 
\label{k>2}Let $\ell=L_c(\epsilon q;k-1,d-1)$ (see \eqref{eq:Lc}) with $\epsilon$ defined in Remark \ref{rem:CerfManzo}
and fix
$n=A\ell \log \ell,$ with $A>2(d-1)+1$. 
\begin{definition}[$G_1,G_2,\Phi$]
The good event $G_1$ consists of all $\o\in S$ such that for all $i\in [d]$ every $i$-slice of $V$ is $(k-1)$-internally spanned.
The
super-good event $G_2$ consists of all $\o\in G_1$ such that the first slice in any direction is empty. The mapping $G_1\overset{\Phi}{\mapsto}
G_2$ is defined by $\Phi(\o)_x=0$ if $x\in \cup_{i=1}^{d}Sl_1(V;i)$ and
$\Phi(\o)_x=\o_x$ otherwise.
\end{definition}
With the triple $(G_1,G_2,\Phi)$ we get immediately that 
\begin{align*}
(1-p_1)&\le dn (1-\hat \mu(I(d-1,k-1,[n]^{d-1}))),\\  
p_2&=\hat\mu(G_2)\ge p_1q^{dn^{d-1}},\\ 
\l_\Phi &\le \left(\frac{2}{q}\right)^{dn^{d-1}}.  
\end{align*}
Using \eqref{eq:probspanned} together with the definition of $n$, we
get immediately that $1-p_1\le A\ell^{-(A-1)}\log\ell$ so that $\lim_{q\to
  0}(1-p_1)\bigl(\log(1/p_2)\bigr)^2=0$ for all
$A> 2d -1$.

\subsubsection{The FA-kf model with $k=2$}
\label{k=2}In this case we choose $V=\prod_{i\in [d]}[n_i]$ with $n_i =\bigl(\frac{ A } {q}\log
(1/q)\bigr)^{1/(d-1)}$ with $A>3/(d-1)$.
\begin{definition}[$G_1,G_2,\Phi$]
The good event $G_1$ consists of all $\o\in S$ such that, for all $i\in
[d]$ every $i$-slice of $V$ contains at least one empty vertex. The
super-good event $G_2$ consists of all $\o\in G_1$ such that any $i$-edge of $V$ is empty.
The mapping $G_1\overset{\Phi}{\mapsto}
G_2$ is defined by  $\Phi(\o)_x=0$ if $x\in \cup_{j=1}^{d}E_j$ and
$\Phi(\o)_x=\o_x$ otherwise.
\end{definition}
As before we easily get
\[
1-p_1=\hat\mu(G^c_1)\le dn (1-q)^{n^{d-1}} \le dn q^A,\quad 
p_2=\hat\mu(G_2)\ge q^{nd},\quad \l_\Phi\le \frac{2^{nd}}{q^{nd}},
\] 
where $2^{nd}$ is the number of possible configurations
$\o'\in \{0,1\}^{\cup_i E_i}$. 
In particular, for all $A> 3/(d-1)$,  $\lim_{q\to
  0}(1-p_1)\bigl(\log(1/p_2)\bigr)^2=0$.

 \subsubsection{The GG-KCM  model}
\label{section:GG}Here we choose $n_1=\lfloor\frac{A\log(1/q)}{q^2}\rfloor$ and 
$n_2= \lfloor\frac{A\log(1/q)}{q}\rfloor$, $A>6$. 
\begin{definition}
We say that
$\o\in G_1$ if all columns of $V=[n_1]\times [n_2]$ contain at least
one empty vertex  and all rows contain at least one pair
of adjacent empty vertices $(x,x')$. We say that
$\o\in G_2$ if $\o\in G_1$ and the first two adjacent columns of $V$ are empty. 
The mapping $\Phi$ is the one which empties the first two columns of $V$.  
\end{definition}
Again we easily obtain that 
\[
1-p_1= O\Bigl(q^{(A-2)/2}\log(1/q)\Bigr), \quad p_2=
O\Bigl(\exp\bigl[-\frac{2A}{q} \log(1/q)^{2}\bigr]\Bigr),\quad
\l_\Phi= O\bigl(2^{2n_2}/q^{2n_2}\bigr).
\]
so that $\lim_{q\to
  0}(1-p_1)\bigl(\log(1/p_2)\bigr)^2=0$ for $A>6$.

Notice that for all models the factor $\bigl(\l_\Phi/p_2^4\bigr)^d|V|$
appearing in Corollary \ref{rhs 9bis} is bounded from above by the
r.h.s. of \eqref{main1:1}  and \eqref{main1:2}. 
\subsection{Part II}
Here we complete the
proof of Theorem \ref{thm:main1} by defining the canonical-paths appearing in Lemmas \ref{A} and \ref{B}
  in such a way that: 
\begin{enumerate}[(a)]
\item they are legal canonical-paths;
\item  the congestion constants $\rho_A,\rho_B$ and the maximum length
  of the paths $N_A,N_B$ are such that $\max\left(\rho_AN_a,\rho_BN_B\right)$ is 
bounded from above by 
r.h.s. of \eqref{main1:1} for the FA-2f and the GG-KCM  models and by the r.h.s. of \eqref{main1:2} for the FA-kf model, $k\ge 3$. 
\end{enumerate}
A very useful strategy to carry out this program is based on the following simple result.
\begin{lemma}
\label{FA1f}
Fix $\o$ and let $\L_1,\L_2,\dots,\L_N$ be $N$ regions with the
property that, for any $j$ and $k=j\pm 1$, if we empty $\L_j$  then we
can also empty $\L_{k}$ by means of a legal schedule using only flips
in $\L_{k}$. Assume that $\o$ is such that $\L_1$ is empty and let
$\o'$ be obtained from $\o$ by emptying $\L_N$. Then there exists a
legal canonical-path $\G_{\o,\o'}=(\o^{(1)},\dots,\o^{(m)})$, $m\le 2\sum_i |\L_i|$, such that for any $j\in [m]$ the following holds.  If the configuration $\o^{(j+1)}$ is obtained from $\o^{(j)}$ by flipping a vertex in $\L_{k_j}$ then all the discrepancies (\ie the vertices where they differ) between $\o$ and $\o^{(j)}$ are contained in $\L_{k_j-1}\cup\L_{k_j}\cup\L_{k_j+1}$ if $k_j<N$ and in $\L_{N-1}\cup\L_{N}$ if $k_j=N$.    
\end{lemma}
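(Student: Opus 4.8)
The statement is a "chain-of-emptyings" lemma: given a sequence of regions $\L_1,\dots,\L_N$ in which consecutive regions can legally empty one another, and a configuration $\o$ in which $\L_1$ happens to be empty, we want to build a legal canonical path that ends at the configuration $\o'$ obtained by emptying $\L_N$, while keeping at every intermediate step the discrepancies with $\o$ localized in three consecutive regions. The natural approach is induction on $N$, but the cleaner way is to describe the path explicitly as a concatenation of "push forward" and "pull back" moves, and then verify the localization and legality claims along the way.

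\emph{First I would set up the basic move.} For each $j$, the hypothesis gives a legal schedule $\sigma_j$ that empties $\L_{j+1}$ using only flips in $\L_{j+1}$, \emph{provided} $\L_j$ is empty; reversing $\sigma_j$ gives a legal schedule that restores $\L_{j+1}$ to its $\o$-value, again using only flips in $\L_{j+1}$ and again legal as long as $\L_j$ is empty (reversal of a legal decreasing path is a legal increasing path, exactly as recorded in the paragraph after the internally-spanned definition). So the elementary operations are: assuming $\L_j$ empty, (i) empty $\L_{j+1}$, or (ii) refill $\L_{j+1}$ to its original content. The plan is to go out to $\L_N$ and come back, refilling everything except $\L_N$ itself.

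\emph{Then I would describe the global schedule.} Starting from $\o$ (with $\L_1$ empty), empty $\L_2$ (legal, since $\L_1$ empty), then $\L_3$ (legal, since $\L_2$ now empty), $\dots$, then $\L_N$: after this "forward sweep" all of $\L_1,\dots,\L_N$ are empty, but $\L_1,\dots,\L_N$ are possibly all discrepant. Now run the "backward sweep": refill $\L_{N-1}$ (legal, since $\L_{N-2}$ is still empty — we have not touched it), then refill $\L_{N-2}$ (legal, since $\L_{N-3}$ still empty), $\dots$, down to refilling $\L_1$. Wait — we must \emph{not} refill $\L_1$ if $\L_1$ was already empty in $\o$; but by hypothesis $\o\restriction\L_1=0$, so refilling $\L_1$ "to its $\o$-value" is a no-op, and we stop the backward sweep at $\L_2$. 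At the end, $\L_N$ is empty and every other $\L_j$ agrees with $\o$: this is exactly $\o'$. The total length is at most $\sum_j|\L_j|$ for the forward sweep plus $\sum_{j<N}|\L_j|$ for the backward sweep, hence $\le 2\sum_j|\L_j|$. For legality one checks at each flip that the region being modified is $\L_k$ and that $\L_{k-1}$ is empty at that moment — which holds because in both sweeps we only ever act on a region whose left neighbor has already been emptied and not yet refilled. (One must double-check the corner case $N=1$: then $\o=\o'$ and the empty path works; and the case where some $\L_j$ is already empty in $\o$, which only shortens things.)

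\emph{Finally, the discrepancy bound.} At any intermediate configuration $\o^{(j)}$ the set of regions currently differing from $\o$ is a \emph{contiguous block} of the form $\{\L_a,\L_{a+1},\dots,\L_b\}$: during the forward sweep it is $\{\L_1,\dots,\L_{b}\}$ with $b$ the frontier we have reached (actually $\{\L_2,\dots,\L_b\}$ if $\L_1$ is empty in $\o$, but in any case contained in $\{\L_1,\dots,\L_b\}$), and during the backward sweep it shrinks from the right. When the active flip is in $\L_{k}$, the frontier $b$ equals $k$ in the forward sweep and $k+1$ in the backward sweep, so in either case all discrepancies lie in $\L_{k-1}\cup\L_k\cup\L_{k+1}$, and when $k=N$ the backward sweep has not started so discrepancies lie in $\L_{N-1}\cup\L_N$. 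This is the localization statement. The only mildly delicate point — and the one I'd expect to be the main obstacle to a fully clean writeup — is bookkeeping the "already empty in $\o$" boundary cases so that the path is genuinely loop-free (condition (ii) of a canonical path): one should take the schedules $\sigma_j$ to flip only sites that actually change, so that emptying-then-refilling $\L_{j+1}$ returns to the \emph{same} configuration and no spurious repeated configuration is produced; with that convention loop-freeness is immediate because the pair (frontier position, sweep direction) strictly progresses.
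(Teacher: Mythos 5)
Your proposed path (forward sweep emptying $\L_2,\dots,\L_N$, then backward sweep refilling $\L_{N-1},\dots,\L_2$) is a legal canonical path from $\o$ to $\o'$ of the right length, but it fails the localization requirement, which is the whole content of the lemma. Midway through the forward sweep, with the active flip in $\L_k$, the regions $\L_2,\dots,\L_{k-1}$ are all emptied and not yet refilled, so the discrepancy set is $\L_2\cup\cdots\cup\L_k$, which is not contained in $\L_{k-1}\cup\L_k\cup\L_{k+1}$ as soon as $k\ge 5$. During the backward sweep the situation is even worse: after refilling $\L_{N-1},\dots,\L_{k+1}$, the discrepancy set is $\L_2\cup\cdots\cup\L_k\cup\L_N$, which is not even a contiguous block, so your assertion that ``during the backward sweep it shrinks from the right'' is false, and your conclusion that ``in either case all discrepancies lie in $\L_{k-1}\cup\L_k\cup\L_{k+1}$'' does not follow from the analysis you wrote (and is in fact false for your path). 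Since the localization property is exactly what drives the bound on the congestion constant in Remark \ref{conges}, losing it makes the path useless for the application.

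The missing idea is to \emph{interleave} the refilling with the advance, rather than deferring it to a single backward sweep. Concretely: from $\o$, empty $\L_2$ (legal: $\L_1$ empty), then empty $\L_3$ (legal: $\L_2$ empty), and now immediately refill $\L_2$ (legal: its neighbour $\L_3$ is empty). You have reached the configuration $\s$ agreeing with $\o$ except that $\L_3$ is emptied, and the discrepancies never left $\L_2\cup\L_3$. Now iterate from $\s$: empty $\L_4$, refill $\L_3$, reaching $\o$-with-$\L_4$-empty, and so on, until you reach $\o'$. With this schedule the set of regions differing from $\o$ at any intermediate time consists of at most two consecutive regions (the region being emptied or refilled and its already-empty neighbour), so the discrepancies always lie within $\L_{k_j-1}\cup\L_{k_j}\cup\L_{k_j+1}$, and in $\L_{N-1}\cup\L_N$ when $k_j=N$. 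The length bound $m\le 2\sum_i|\L_i|$ holds since each $\L_i$, $2\le i\le N-1$, is touched at most twice (once to empty, once to refill) and $\L_N$ once. This is the construction the paper gives.
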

\begin{proof}
By assumption we can first empty $\L_2$ and then $\L_3$ by using flips first in $\L_2$ and then in $\L_3$. Let $\eta$ be the new configuration and let $\s$ be the configuration obtained from $\o$ by emptying $\L_3$. We can then restore the original values of $\o$ in $\L_2$ by reversing the legal canonical-path $\G_{\s,\eta}$. Starting from $\s$ we can iteratively repeat the above procedure and get a final legal canonical-path $\G_{\o,\o'}$ with the required property.
\end{proof}
\begin{remark}
\label{conges}The fact that the discrepancies between an intermediate step of the
path $\o^{(j)}$ and
the starting configuration $\o$ are contained in a triple of
consecutive $\L_i$'s allows us to easily upperbound the congestion
constant $\rho_\G := \sup_{\tilde \o}\sumtwo{}{\o:\ \G_{\o,\o'}\ni \tilde
  \o}\frac{\mu(\o)}{\mu(\tilde \o)}    
 $ of the family $\{\G_{\o,\o'}\}_{\o\in S}$ 
by $\left(2/q\right)^{\max_{i}(|\L_{i-2}|+|\L_{i-1}|+|\L_i|)}$. This
  observation will be the main tool to bound the congestion constants
  $\rho_A,\rho_B$ appearing in Corollary \ref{rhs 9bis}.
\end{remark}
 \subsubsection{The FA-kf model with $k\ge 3$}
As before set $V=[n]^d$ with $n$ as in Section \ref{k>2}. The proof is
based on a series of simple observations which, under certain
natural assumptions, ensure the existence
of legal canonical-paths with some prescribed properties.
\begin{claim}
Let $\o$ be a configuration such that the $i$-slice $Sl_j(V;i)$ is empty and the $i$-slice $Sl_{j-1}(V;i)$  is $(k-1)$-internally spanned. Let $\omega'$ be such that $\o'\restriction Sl_{j-1}(V;i)=0$ and $\o'$ coincides with $\o$ elsewhere. Then there is a legal decreasing canonical-path $\G_{\o,\o'}$ which uses only flips inside $S_{j-1}(V;i)$. Similarly if we replace $S_{j-1}(V;i) $ with $S_{j+1}(V;i) $. 
\label{banale}
   \end{claim}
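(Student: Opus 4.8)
The plan is to reduce the claim to the general fact, recalled just before the statement, that a $\cU$-internally spanned region can be emptied by a legal decreasing canonical path using only flips inside that region --- applied to the update family of the FA-$(k-1)$f model in dimension $d-1$ living on the slice.

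First I would identify the $(d-1)$-dimensional slice $Sl_{j-1}(V;i)=\{x\in V:\ x_i=j-1\}$ with the hypercube $[n]^{d-1}$ through the coordinates $(x_l)_{l\neq i}$. By hypothesis the restriction $\o\restriction Sl_{j-1}(V;i)$ makes this copy of $[n]^{d-1}$ $(k-1)$-internally spanned for the FA-$(k-1)$f dynamics, so the general fact produces a legal decreasing canonical path $\G=(\o^{(1)},\dots,\o^{(m)})$ with $\o^{(1)}=\o$, $\o^{(m)}=\o'$, $m\le n^{d-1}$, whose every flip occurs inside $Sl_{j-1}(V;i)$; legality here means that the vertex $x^{(r)}$ flipped at step $r$ has at least $k-1$ empty neighbours \emph{inside the slice} in the configuration $\o^{(r)}$.

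The only remaining point is that $\G$ is also a legal canonical path for the FA-$k$f model in $\bbZ^d$, i.e. that at every step $x^{(r)}$ has at least $k$ empty neighbours in $\bbZ^d$. This is exactly where the emptiness of $Sl_j(V;i)$ is used: since $x^{(r)}_i=j-1$, the neighbour $x^{(r)}+\vec e_i$ lies in $Sl_j(V;i)\subset V$, and because all flips of $\G$ are inside $Sl_{j-1}(V;i)$ the slice $Sl_j(V;i)$ stays identically empty along the whole path, so $x^{(r)}+\vec e_i$ is empty in $\o^{(r)}$. Together with the $\ge k-1$ empty neighbours of $x^{(r)}$ inside the slice this gives $\ge k$ empty neighbours in $\bbZ^d$, which is precisely the FA-$k$f constraint at $x^{(r)}$. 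Hence $\G=\G_{\o,\o'}$ is a legal decreasing canonical path for FA-$k$f using only flips inside $Sl_{j-1}(V;i)$, as claimed; the variant with $Sl_{j+1}(V;i)$ is obtained verbatim, now using that for $x\in Sl_{j+1}(V;i)$ the neighbour $x-\vec e_i$ belongs to the empty slice $Sl_j(V;i)$. There is no serious obstacle here: the content is the elementary ``dimensional reduction'' remark that an adjacent all-empty slice upgrades the FA-$(k-1)$f constraint within a slice to the FA-$k$f constraint of the ambient lattice, and the only thing one must check is that the path provided by the general fact never touches $Sl_j(V;i)$, which is immediate from its construction.
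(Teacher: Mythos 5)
Your proof is correct and follows essentially the same argument as the paper: the $(k-1)$-internally spanned slice is emptied by the FA-$(k-1)$f decreasing path within that slice, and the constantly empty adjacent slice $Sl_j(V;i)$ supplies the extra empty neighbour upgrading legality to the FA-$k$f constraint. You merely make explicit (correctly) that $Sl_j(V;i)$ stays empty throughout because all flips are confined to $Sl_{j-1}(V;i)$, a point the paper leaves implicit.
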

\begin{proof}
The  result can be immediately proven  by noticing that each site in $Sl_{j-1}(V;i)$ has an empty neighbour in $Sl_{j}(V;i)$. Since $Sl_{j-1}(V;i)$ is $(k-1)$-internally spanned, the legal (w.r.t. to the FA-(k-1)f constraint) monotone path which empties it is also legal w.r.t. the FA-kf constraint.
\end{proof}
\begin{claim} 
\label{claim1k>3}
 Fix $i\in[d],\, m\in [n]$ and let $(\o,\o')$ be a pair of
 configurations satisfying at least one of the following conditions: 
 \begin{enumerate}[(a)]
 \item $\omega$ is such that 
the first $i$-slice is empty and all the others are $(k-1)$-internally
spanned and $\omega'$ is obtained from $\o$ by emptying the
$m^{th}$ $i$-slice and the first $m-1$ $i$-frames.
\item $\omega$ is such
that $\cup_{i=1}^d Sl_1(V;i)$ is empty and $\omega'$ is
obtained from $\o$ by emptying $Sl_m(V;i)$. 
 \end{enumerate}
Then there exists a legal canonical-path
$\G_{\o,\o'}=(\omega^{(1)},\omega^{(2)},\dots,\omega^{(N)})$ with
$N\le 2n^d$ such that
the only discrepancies between $\o$ and $\omega^{(j)}$, $j\in [N]$, belong to the set 
\[
Sl_{k_j-1}(V,i)\cup Sl_{k_j}(V,i)\cup Sl_{k_j+1}(V;i)\cup \left(\cup_{\ell=1}^{k_j}F_\ell(V;i)\right),
\]
where $k_j$ is such that the flip connecting $\o^{(j)}$ to $\o^{(j+1)}$ occurs in the $k_j^{th}$ $i$-slice. 
\end{claim}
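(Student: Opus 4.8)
The plan is to deduce both cases from Lemma \ref{FA1f}, taking the regions $\Lambda_1,\dots,\Lambda_N$ to be the first $m$ consecutive $i$-slices of $V$, i.e. $\Lambda_\ell=Sl_\ell(V;i)$ for $\ell\in[m]$. The hypothesis ``$\Lambda_1$ empty'' is exactly the assumption on $\omega$ in both cases (a) and (b), and Lemma \ref{FA1f} will then produce a legal canonical path from $\omega$ to the configuration obtained by emptying $Sl_m(V;i)$, with all intermediate discrepancies from $\omega$ confined, at a step whose flip lies in slice $k_j$, to $Sl_{k_j-1}(V;i)\cup Sl_{k_j}(V;i)\cup Sl_{k_j+1}(V;i)$, and of length at most $2\sum_{\ell\le m}|Sl_\ell(V;i)|=2m\,n^{d-1}\le 2n^d$. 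So the only points to check are the structural hypothesis of Lemma \ref{FA1f} — that emptying a slice allows a \emph{legal} emptying of each neighbouring slice using flips inside it — and, in case (a) only, the additional mechanism needed to also empty the first $m-1$ frames.

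In case (a) the structural hypothesis is precisely Claim \ref{banale}: if $Sl_j(V;i)$ is empty and $Sl_{j\pm1}(V;i)$ is $(k-1)$-internally spanned, then the FA-$(k-1)$f-legal monotone emptying of $Sl_{j\pm1}(V;i)$ is also FA-$k$f-legal, since every flipped site additionally sees the empty neighbour in $Sl_j(V;i)$, and it uses only flips inside $Sl_{j\pm1}(V;i)$. To land on $\omega'$ rather than on the configuration with merely $Sl_m(V;i)$ emptied, I would run Lemma \ref{FA1f} with one modification: whenever its procedure \emph{refills} a slice $Sl_\ell(V;i)$ with $\ell\le m-1$, refill only $Sl_\ell(V;i)\setminus F_\ell(V;i)$ and leave the frame empty. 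This stays legal — a partial refill leaves strictly more empty sites than a full refill, so every flip keeps satisfying the FA-$k$f constraint — it does not increase the length, and because the ``emptying front'' of the procedure advances monotonically through the slices, a slice whose frame has been kept empty is never revisited, so no later step is disturbed. The empty frames accumulated behind the front are then $F_1(V;i),\dots,F_{k_j}(V;i)$ at a step whose flip is in slice $k_j$, which is exactly the extra term appearing in the discrepancy set of the statement, and the endpoint is now precisely $\omega'$.

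In case (b) the slices are not assumed internally spanned, so Claim \ref{banale} is unavailable; instead the structural hypothesis follows from a corner-spanning argument. Since $\cup_{j\in[d]}Sl_1(V;j)$ is empty, inside every slice $Sl_\ell(V;i)$ the union of its $d-1$ lower faces $\{x:x_{j}=1\}$, $j\neq i$, is empty, and for $k-1\le d-1$ a union of all lower faces of a $(d-1)$-dimensional box is $(k-1)$-internally spanned for FA-$(k-1)$f (the empty region grows out of the corner; process sites in increasing $\ell^1$-distance to $(1,\dots,1)$). Hence, once an adjacent slice $Sl_j(V;i)$ with $j=\ell\pm1$ is empty, running this $(k-1)$-infection inside $Sl_\ell(V;i)$ — each flip using in addition the empty neighbour in $Sl_j(V;i)$ — gives an FA-$k$f-legal emptying of $Sl_\ell(V;i)$ with flips only in $Sl_\ell(V;i)$. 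This verifies the hypothesis of Lemma \ref{FA1f}, which then yields the stated path; here no frames need be emptied, so the discrepancy set is a fortiori contained in the one claimed.

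The proof is essentially mechanical once Lemma \ref{FA1f} and Claim \ref{banale} are in hand, and I do not expect a genuine obstacle. The two points needing a short but real argument are the corner-spanning fact used in case (b) and the observation that keeping a frame empty during a refill is a legal FA-$k$f move that does not perturb the rest of the construction; the place to be careful is the bookkeeping of which slices and which frames are empty at each intermediate configuration, since this is what simultaneously certifies legality of every flip and the precise form of the discrepancy set.
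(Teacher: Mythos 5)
Your proof is correct. Case (a) is essentially the paper's argument: run Lemma \ref{FA1f} with $\L_\ell=Sl_\ell(V;i)$, using Claim \ref{banale} to certify the structural hypothesis, and modify each refill step to leave the frame empty, which remains legal by monotonicity of the constraint in the number of zeros and never perturbs later steps since the front only advances.

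Case (b) is where you genuinely depart from the paper. The paper establishes the structural hypothesis of Lemma \ref{FA1f} by a double induction on $(k,d)$: the base case $k=2$ is handled for all $d$ by a separate induction on $d$, and the inductive step $k\to k$ invokes the hypothesis for $(k-1,d-1)$ applied inside a single $(d-1)$-dimensional slice, with the paper also enlarging the regions to $\L_j=Sl_j(V;i)\cup\bigl(\cup_{j'}Sl_1(V;j')\bigr)$ so that the already-empty lower faces are formally included. You instead prove directly that a $(d-1)$-dimensional box whose lower faces $\{x:x_{j'}=1\}$, $j'\neq i$, are empty is $(k-1)$-internally spanned whenever $k-1\le d-1$, by processing vertices in increasing $\ell^1$-distance from the corner so that each non-boundary vertex sees all $d-1$ predecessors $x-\vec e_{j'}$ already infected; combined with the empty neighbour in the adjacent emptied slice, this gives FA-$k$f legality and reduces case (b) to exactly the same Claim-\ref{banale}-plus-Lemma-\ref{FA1f} machinery as case (a). Your version is shorter and avoids the nested induction; it also lets you take $\L_\ell=Sl_\ell(V;i)$ unadorned, so the discrepancy set coming out of Lemma \ref{FA1f} matches the claim's statement on the nose rather than with the harmless extra set $\cup_{j'}Sl_1(V;j')$ that the paper's choice of $\L_j$ formally carries along. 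Both approaches are valid; yours buys simplicity, the paper's buys a formulation that recursively reuses the claim itself.
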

\begin{proof}
Case (a). In this case we simply apply Lemma \ref{FA1f} and Claim
\ref{banale} to the first $m$ $i$-slices with a twist. After emptying the $j^{th}$ $i$-slice, $j=1,2,\dots,m$, instead of reconstructing the original values of $\o$ in the previous slice we do so only in $Sl_{j-1}(V;i)\setminus F_{j-1}(V;i)$. In such a way the $i$-frames once emptied remain so and we get to the final configuration $\o'$ by a legal canonical-path satisfying the required property.

Case (b). We use again Lemma \ref{FA1f} and Claim \ref{banale}. The base
case $k=2, d=2$ follows by observing that the $i$-slices, $i=1,2$,
are $1$-internally spanned since they all contain an empty site. The
case $k=2$ and $d>2$ follows by induction. In fact $Sl_2(V;i)$ is of the form $\L
\times \{x_i=2\}$ with $\L$ isomorphic to $[n]^{d-1}$. Moreover
$\cup_{i=1}^{d-1}Sl_1(\L;j)\times \{x_i=2\}\subset \cup_{j=1}^d Sl_1(V;j)$ and therefore it
 is empty by assumption. By the inductive hypothesis for $k=2,d-1$ we
 can empty $Sl_2(V;i)$  using only flips inside $Sl_2(V;i)$. This
 concludes the proof for $k=2$ and any $d\ge 2$. 
We thus
assume the result true for $(k-1,d-1)$ and prove it for $(k,d)$, $d\ge
k$. In this case we apply Lemma \ref{FA1f} to the regions $\L_j:=Sl_j(V;i)\cup \left(\cup_{i=1}^d Sl_1(V;i)\right)$. For simplicity and w.l.o.g we only verify the assumption of the lemma for the pair $\L_1,\L_2$. In this case we aim at constructing a legal canonical-path that
empties $Sl_2(V;i)$ using only flips there.

Thus, using the inductive hypothesis and the fact that each site on $Sl_2(V;i)$ has an additional empty neighbour in $Sl_1(V;i)$, we can empty $Sl_2(V;i)$ 
by a legal canonical-path which uses flips only in $Sl_2(V;i)$.
\end{proof}
We are now ready to state the main result for the case under consideration.
\begin{proposition}
\label{prop:k>2}
In the above setting there exists a choice of the canonical-paths occurring in Lemmas \ref{A} and \ref{B} such that, for a suitable positive constant $c$,
\begin{itemize}
\item each path is a legal canonical-path and $\max(N_A,N_B)\le c n^d$;  
\item $\max(\rho_A,\rho_B)\le (1/q)^{cn^{d-1}}$.
\end{itemize}
\end{proposition}
Using that $n=A\ell \log \ell$, $\ell$ being  the
critical length for the FA-(k-1)f model in $\bbZ^{d-1}$ given by (cf. \eqref{eq:Lc}) 
\[
\ell=\exp_{(k-2)}\Bigl(\frac{\l(d-1,k-1)+o(1)}{q^{1/(d-k+1)}}\Bigr), 
\]
the proposition implies that  
\[
\max(\rho_A N_A,\rho_B N_B)\le \text{r.h.s. of \eqref{main1:2}},
\]
so that the conclusion of Theorem \ref{thm:main1} for the case $k\ge 3$ follows from Corollary \ref{rhs 9bis}.\qed
\begin{proof}[Proof of the proposition]
We begin by examining the choice of the canonical-paths appearing in
Lemma \ref{B}. Using the definition of the good and super-good events
$G_1,G_2$ given in Section \ref{k>2}, our choice for the canonical
paths is the one dictated by (a) of Claim \ref{claim1k>3}. In this
case, using Remark \ref{conges}, $N_B\le c n^d$ and $\rho_B\le (1/q)^{ n^{d-1}}$ for some constant $c>0$. 

We now turn to the canonical-paths appearing in Lemma \ref{A}. Fix $\omega$ and $z$ as in the lemma and observe that, using (b) of claim \ref{claim1k>3}, we can empty all the slices $S_{z_i+1}(V;i)$, $i\in[d]$, via a legal schedule. Call $\o'$ the configuration obtained in this way. In $\o'$ we can make a flip at $z$ since $z$ has at least $d$ empty neighbors. We can finally reverse the path from $\o$ to $\o'$ to obtain our final legal canonical-path between $\o$ and $\o^z$. Claim \ref{claim1k>3} again implies that  
$N_A\rho_A\leq cn^{2d} 1/q^{cn^{d-1}}$.
\end{proof}
 \subsubsection{The FA-kf model with $k=2$}
As before set $V=[n]^d$ with $n$ as in Section \ref{k=2}.
For any $x\in V$ we define the \emph{cross at $x$} as the set $\mathcal C_x(V):=\cup_{i=1}^d \cC_x(V;i)$ with
\[
\cC_x(V;i):=\{x'\in V:\  x'_j=x_j\  \forall j\neq i\}.
\] 
Notice that the cross of the vertex $(1,1,\dots,1)\in V$ is the union
of the edges $E_i(V)$. 
\begin{claim}
\label{claim:cross}
Given $x,y \in V$ such that $y=x\pm \vec e_i$ for some $i\in [d]$, let $\omega$ be such that $\mathcal C_x(V)$ is empty
and let $\omega'$ be the configuration obtained from $\o$ by emptying
the cross at $y$. Then there exists a legal decreasing canonical-path
$\G_{\o,\o'}=(\omega^{(1)},\dots,\omega^{(m)})$, $m\le 2dn$, using
only flips in $\mathcal
C_x(V)\cup \cC_y(V)$. 
\end{claim}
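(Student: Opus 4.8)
The plan is to write down an explicit legal schedule that empties the cross at $y$ using only flips inside $\cC_y(V)$, and then to check legality vertex by vertex. Assume without loss of generality that $y=x+\vec e_i$, so $x=y-\vec e_i$. The first remark is that the line $\cC_y(V;i)$ coincides with $\cC_x(V;i)$ --- both consist of the vertices of $V$ that agree with $x$ (equivalently with $y$) in every coordinate $\neq i$ --- hence it is already empty in $\omega$; in particular $\omega_y=0$. Consequently it suffices to empty, one after the other, the $d-1$ remaining lines $\cC_y(V;j)$ with $j\neq i$.

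Fix $j\neq i$ and parametrise $\cC_y(V;j)$ by its $j$-th coordinate: let $p_t\in V$ ($t\in[n]$) be the vertex with $j$-th coordinate $t$ whose remaining coordinates agree with $y$, so that $p_{y_j}=y$. The key elementary observation is that for every $t$ the vertex $p_t-\vec e_i$ is a nearest neighbour of $p_t$ lying on $\cC_x(V;j)\subseteq\cC_x(V)$; since $\omega$ has the whole cross at $x$ empty and the canonical path to be built is decreasing (it only turns sites off), this neighbour is empty at every step. Thus each vertex of $\cC_y(V;j)$ comes with one permanently empty neighbour ``for free''. We then empty $\cC_y(V;j)\setminus\{y\}$ in order of increasing distance $|t-y_j|$ from $y$, flipping $p_t$ to $0$ when it is occupied and skipping it when it is already empty. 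When $p_t$ is processed, its neighbour on the segment joining it to $y$ that is one step closer to $y$ --- which is $y$ itself if $|t-y_j|=1$, and an already-processed (hence empty) vertex of $\cC_y(V;j)$ otherwise --- is empty, and $p_t-\vec e_i$ is empty, so $p_t$ has at least two empty neighbours and the flip is legal for the FA-2f constraint; in all cases $p_t$ is empty once processed. Carrying this out for each $j\neq i$ in succession yields a legal decreasing canonical path from $\omega$ to $\omega'$ whose flips all lie in $\bigcup_{j\neq i}\cC_y(V;j)\subseteq\cC_y(V)$. Each of the $d-1$ lines contributes at most $n$ flips, so the length satisfies $m\le (d-1)n+1\le 2dn$; and since the set of empty sites grows strictly at each flip the path has no loops, so it is genuinely a canonical path.

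I do not expect a real obstacle here: the only point needing a little care is the legality of the flip at the vertex adjacent to $y$ on each line (where the missing second empty neighbour is supplied by $y$, lying on the already-empty line $\cC_y(V;i)$) and at the two end vertices $p_1,p_n$ (where the sweep simply stops, and the already-processed neighbour together with the free neighbour $p_t-\vec e_i$ still give two empty neighbours). All the content of the claim is in the geometric remark that the cross at $x$ donates one empty neighbour to every vertex of every line of the cross at $y$, which reduces the legality check to the trivial fact that a line can be emptied working outward from an already-empty interior vertex once each of its vertices carries an extra empty neighbour.
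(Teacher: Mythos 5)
Your proof is correct and follows essentially the same route as the paper: observe that $\cC_y(V;i)=\cC_x(V;i)$ is already empty, then empty each remaining arm $\cC_y(V;j)$, $j\neq i$, by sweeping outward from $y$, using at every step the permanently empty neighbour in $\cC_x(V;j)$ together with the previously emptied neighbour one step closer to $y$ to satisfy the FA-2f constraint. The only difference is that you spell out the parametrisation and the boundary cases more explicitly than the paper's terse version; the argument is identical in substance.
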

\begin{proof}
Since $y=x+\pm \vec e_i$ then necessarily
$\cC_y(V;i)=\cC_x(V;i)$. Consider now the vertex $z=y\pm \vec e_j$
with $j\neq i$. This vertex has two empty neighbors: one is $y$ and
another belongs to $\cC_x(V)$. Therefore $z$ can be emptied. We can
iterate until we empty the $j^{th}$ arm of the cross $\cC_y(V)$ and
then repeat the procedure for all the remaining direction but the $i^{th}$-one. 
\end{proof}
As for the case $k\ge 3$ we have: 
\begin{proposition}
\label{prop:k=2}
In the above setting there exists a choice of the canonical-paths occurring in Lemmas \ref{A} and \ref{B} such that, for a suitable positive constant $c,$
\begin{itemize}
\item each path is a legal canonical-path and $\max(N_A,N_B)\le c n^2$;  
\item $\max(\rho_A,\rho_B)\le (1/q)^{cn}$.
\end{itemize}
\end{proposition}
Using that $n=\bigl(\frac{ A } {q}\log
(1/q)\bigr)^{1/(d-1)}$,
the proposition implies that  
\[
\max(\rho_A N_A,\rho_B N_B)\le \text{r.h.s. of \eqref{main1:1}},
\]
so that the conclusion of Theorem \ref{thm:main1} for the case $k=2$ follows from Corollary \ref{rhs 9bis}.\qed
\begin{proof}[Proof of Proposition \ref{prop:k=2}]
We begin by examining the choice of the canonical-paths appearing in
Lemma \ref{B}. Fix $\o$ and suppose that we have two hypercubes
$V=[n]^d$ and $V'=V +(n+1)\vec e_1$ such that $\o\restriction V$ is good
and $\o\restriction V'$ is super-good. Let also $\o'$ be obtained from
$\o$ by emptying the cross of the vertex $(1,1,\dots,1)\in V$ so that $\o'\restriction V$ is super-good. 
Let now $z^{(i)}$
be the first (according to some apriori order) vertex in the $(n-i+1)^{th}$ $1$-slice $Sl_{n-i+1}(V;1)$ which is empty and let
$\bar z^{(i)}=z^{(i)}+\vec e_1$. Observe that the vertex $\bar
z^{(i)}$ belong to the same $1$-slice of $V$ as the vertex $z^{(i-1)}$
and that the vertex $z^{(i)}$ exists for all
$i\in [n]$ because $\o\restriction V$ is
good. Finally let $\g=(x^{(1)},\dots x^{(m)})$, $m\le n^2$, be the geometric path connecting $x^{(1)}:=(1,\dots,1)+n\vec e_1\in V'$ with $x^{(m)}:=(1,\dots,1)\in V$ constructed according to the
following schedule:
\begin{enumerate}[(a)]
\item join $x^{(1)}$ with $\bar z^{(1)}$ by first adjusting the second coordinate,
then the third one etc;
\item join $\bar z^{(1)}$ to $z^{(1)}$;
\item repeat the above steps with $x^{(1)}$ replaced by $z^{(1)}$ and $\bar z^{(1)}$ by $\bar
  z^{(2)}$ etc.
\end{enumerate}
Next, for $i\in[m]$, let $\L_i$ be the cross $\cC_{x^{(i)}}(V^{(i)})$
where $V^{(i)}$ is the hypercube $V+ (x^{(i)}_1-1)\vec e_1$. Notice
that $x^{(i)}\in Sl_1(V^{(i)};1)$.
We claim that the above sets satisfy the assumption of Lemma
\ref{FA1f}. If the hypercubes $V^{(i)}, V^{(i+1)}$ are the same then
the claim follows immediately from Claim \ref{claim:cross}. If
$V^{(i+1)}=V^{(i)}-\vec e_1$ then necessarily the pair
$(x^{(i)},x^{(i+1)})$ must be of the form $(\bar z^{(j)},z^{(j)})$ for
some $j$ and having the cross $\cC_{x^{(i)}}(V^{(i)})$ empty implies
that also the cross $\cC_{x^{(i)}}(V^{(i+1)})$ is empty because, by
assumption, $\o_{z^{(j)}}=0$. Thus we can apply again Claim
\ref{claim:cross}, this time in the hypercube $V^{(i+1)}$, and empty
$\L_{i+1}$. It is now a simple check to verify that the path defined
in this way satisfy $N_B\le cn^2$ and $\rho_B\le e^{cn}$ for some
constant $c>0$. 

We now examine the canonical-paths entering in Lemma \ref{A}. Let $\o$
be such that all the hypercubes $V+\vec e_i$, $i\in[d]$, are
super-good, let $z\in V$ and let $\o'$ be obtained from $\o$ by
flipping $\o_z$. W.l.o.g. we assume in the sequel that
$z=(1,\dots,1)$. 

Let $\tilde \o$ be the intermediate configuration obtained from $\o$ by
emptying the cross (in $V$) of the vertex
$x^{(1)}:=(n,\dots,n)$. Using Lemma \ref{claim:cross} it is easy to
check that 
there exists a legal canonical-path $\G_{\o,\tilde \o}$
with a congestion constant $\rho_\G\leq (1/q)^{c n}$ for some constant
$c>0$. Next
let $\g=(x^{(1)},\dots,x^{(m)})$ be a geometric path connecting $x^{(1)}$
with the vertex $z+\sum_{i=1}^d \vec e_i$ and define
$\L_i=\cC_{x^{(i)}}(V)$. Using Claim \ref{claim:cross} and the
definition of $\tilde \o$ the sets $\{\L_i\}_{i=1}^m$ satisfy the
assumption of Lemma \ref{FA1f}. In conclusion we have proved the
existence of a legal canonical-path $\G_{\o,\hat\o}$ where $\hat \o$
is obtained from $\o$ by emptying the cross of $x^{(m)}$. Now we can
legally flip $z$ and then reverse the path
$\G_{\o,\hat \o}$ to finally get to $\o'=\o^z$. In conclusion we have obtained a legal canonical
path $\G_{\o,\o'}$ and the claimed properties of $N_A$ and $\rho_A$
follow at once from its explicit construction.   
\end{proof}
\subsubsection{The GG-KCM  model} 
Recall that in this case the basic block $V$ is the $[n_1]\times [n_2]$
rectangle, with $n_1,n_2$ as in Section \ref{section:GG}. Moreover,
given $\o\in \{0,1\}^V$, the block $V$ is good if every column
contains an empty site and every row contains a pair of adjacent empty
sites. It is super-good if it is good and the first two columns are empty.

In this setting two basic observations will be at the basis of our
definition of the canonical-paths appearing in
Lemmas \ref{A} and \ref{B}. 
Fix an integer $n$ together
with $\o\in \{0,1\}^{[4]\times [n+1]}$ and consider four consecutive columns $C_i=\{x=(i,j), \ j\in [n]\}$,
  $i\in [4]$.
\begin{enumerate}[(1)]
\item If $C_1,C_2$
  are empty and $C_3$ contains an empty site, then $C_3$ can be emptied
  by a legal decreasing canonical-path using only flips in
  $C_3$. Similarly if the role of $C_1$ and $C_3$ is interchanged.
\item If $C_1,C_2$
  are empty and the two vertices $x=(3,n+1)$ and $y=(4,n+1)$ above the
  $3^{th}$ and $4^{th}$ column are also empty, then $C_3$ and $C_4$ can be emptied
  by a legal decreasing canonical-path using only flips in $C_3\cup
  C_4$. Similarly if the role of the pair $(C_1,C_2)$ and
  $(C_3,C_4)$ is interchanged and the sites $x,y$ are replaced by $x'=(1,n+1),y'=(2,n+1)$. 
 \end{enumerate}
Using the above we can prove our final proposition.
\begin{proposition}
\label{prop:GG}
For the GG-KCM model there exists a choice of the canonical-paths occurring in Lemmas \ref{A} and \ref{B} such that, for a suitable positive constant $c,$
\begin{itemize}
\item each path is a legal canonical-path and $\max(N_A,N_B)\le c n_1n_2;$  
\item $\max(\rho_A,\rho_B)\le (1/q)^{cn_2}$.
\end{itemize}
\end{proposition}
\begin{proof}
We begin with the definition of the canonical-paths appearing in Lemma
\ref{B} with, for simplicity, $V_x=V$ and $V_y=V'$ where $V'$ is
either $V+(n_1+1)\vec e_1$ or
$V+(n_2+1)\vec e_2$. For simplicity we will not make any attempt to optimize our construction, \ie to improve over the constant $c$ above.
\begin{figure}
  \centering
\begin{tikzpicture}[>=latex]
\draw [help lines] (0,0) grid [step=0.5] (14.5,3);
\begin{scope}
 \draw [thick]  (0,0) rectangle (7,3);
\node  at (3.5,-0.5) {$V$};
\draw [thick] (7.5,0) rectangle (14.5,3);
\node at (11,-0.5) {$V'$};
\draw[line width=4pt] (7.5,0) -- (7.5,3);
\draw[line width=4pt] (8,0) -- (8,3);
\draw[gray, line width=4pt] (3,0) -- (3,3);
\draw[gray,line width=4pt] (2.5,0) -- (2.5,3);
\draw[line width=4pt] (0,0) -- (0,3);
\draw[line width=4pt] (0.5,0) -- (0.5,3);
\foreach \i in {(3,2),(2.5,1),(1,0.5), (1.5,0.5), (1.5, 2),(2,2), (3.5,0), (4.5,0), (5,0), (5,3), (5.5,3), (4, 2.5),(4.5,2.5),(5,1.5), (5.5, 1.5),(6,1),(6.5,1), (7,2)} \node at \i
{\textbullet};
\draw [dashed,thick,->] (7.5,1.7)--(3.2,1.7);
\draw [dashed,thick,->] (2.3,1.7)--(0.75,1.7);
\end{scope}
\end{tikzpicture}
\caption{A sketch of the canonical-path $\G_{\o,\o'}$ appearing in
  Lemma \ref{B} for two horizontally
    adjacent blocks. Only the $1^{st}$  and $2^{nd}$
empty columns of the right super-good block are drawn (black).
The black dots in the left
block denote the empty sites, while the
    gray columns denote the different positions of the pair of adjacent columns
    inside the path. Notice the pair of adjacent empty
    sites on each row.  
  }
\label{fig:1}
\vskip 0.5cm
\begin{tikzpicture}[>=latex]
[x=1cm, y=1cm, semitransparent]
\draw [help lines] (0,0) grid [step=0.5] (7,6.5);
\begin{scope}
\draw [thick] (0,3.5) rectangle (7,6.5);
\node at (7.5,5) {$V'$};
\draw[line width=4pt] (0,3.5) -- (0,6.5);
\draw[line width=4pt] (0.5,3.5) -- (0.5,6.5);
\end{scope}
\begin{scope}
 \draw [thick]  (0,0) rectangle (7,3);
\node at (7.5,1.5) {$V$};
\draw[gray, line width=4pt] (2.5,1) -- (2.5,6.5);
\draw[gray, line width=4pt] (3,1) -- (3,6.5);
\draw[gray, line width=4pt] (5,3) -- (5,6.5);
\draw[gray, line width=4pt] (5.5,3) -- (5.5,6.5);
\draw[gray, line width=4pt] (3.5,2.5) -- (3.5,6.5);
\draw[gray, line width=4pt] (4,2.5) -- (4,6.5);
\draw[gray, line width=4pt] (1.5,2) -- (1.5,6.5);
\draw[gray, line width=4pt] (2,2) -- (2,6.5);
\draw[line width=4pt] (0,0) -- (0,3);
\draw[line width=4pt] (0.5,0) -- (0.5,3);
\draw[gray, line width=4pt] (6.5,1.5) -- (6.5,6.5);
\draw[gray, line width=4pt] (6,1.5) -- (6,6.5);
\foreach \i in {(3,2),(2.5,1),(1,0.5), (1.5,0.5), (1.5, 2),(2,2), (3.5,0), (4.5,0), (5,0), (5,3), (5.5,3), (3.5, 2.5),(4,2.5),(6,1.5), (6.5, 1.5),(3,1), (7,2)} \node at \i
{\textbullet};
\draw [dashed,thick,->] (0.7,6)--(4.9,6);
\draw [dashed,thick,->] (5,5)--(4.1,5);
\draw [dashed,thick,->] (3.4,4)--(2.1,4);
\draw [dashed,thick,->] (2.1,2.2)--(5.9,2.2);
\draw [dashed,thick,->] (5.8,1.5)--(3.1,1.5);
\node at (3.3,0.9) {$\mathbf \Lambda_5$};
\end{scope}
\end{tikzpicture}
  \caption{A sketch of the canonical-path $\G_{\o,\o'}$ for two vertically
    adjacent blocks. The sequence of the dashed arrows must be read from top to bottom. Initially the $1^{st}$ and $2^{nd}$ empty columns
    of the top block (drawn in thick black) travel until they sit above 
  the first pair of adjacent empty sites on the top row of the bottom
  block (grey position). At this time their height grows by one unit. Later in the
  path this new pair of empty columns is moved above
    the first pair of adjacent empty sites on the next to top row of
    the bottom block and so forth until the $1^{st}$ and $2^{nd}$
    columns of the bottom block become empty. 
  }
\label{fig:2}
\end{figure}

In the first case,
$V'=V+(n_1+1)\vec e_1$, let $\o\in \{0,1\}^{V\cup V'}$ be such that $V$ is good and $V'$
is super-good and let $\o'$ be obtained from $\o$ by emptying the first
two columns of $V$. Then we can use observation (1) above together with
Lemma \ref{FA1f} to get that there exists a legal canonical-path
$\G_{\o,\o'}$ of maximal length $c n_1n_2$ and congestion constant
$\rho_B\le (1/q)^{c n_2}$ for some constant $c>0$. Notice that in this
case we didn't use the fact that if $V$ is good then every row
contains a pair of adjacent empty sites (cf. Figure \ref{fig:1}).

In the second case,
$V'=V+(n_2+1)\vec e_2$, for $i\in [n]$ define $a_i$ as the smallest integer $j\in [n-1]$
such that 
$x=(j,n-i+1)$ and $y=(j+1,n-i+1)$ are both empty. Using that $V$ is
good the integer $a_i$ is well
defined. Let also $\L_i$ denotes the two semi-columns in $V\cup V'$ above the vertices
$(a_i,n-i+1)$ and $(a_i+1,n-i+1)$ (cf. Figure \ref{fig:2}).

Using observation (1) together with Lemma \ref{FA1f} we can then
obtain a legal canonical-path between $\o$ and $\o'$, whose length
is at most $c n_1n_2^2$ and whose congestion constant is bounded from
above by $(1/q)^{cn_2}$ for some $c>0$ independent of $i$, as follows:
\begin{enumerate}[(a)]
\item starting from the first two empty columns in $V'$, we begin to empty
  $\L_1$. Then, starting from the two empty semi-columns
  $\L_1\cup\{a_1,n\}\cup\{a_1+1,n\}$, we empty the two sites $x=(1,n),x'=(2,n)$ while
  restoring the original values of $\o$ in all the other sites of $V\cup V'$. 
\item We now repeat the same procedure with $\L_1$ replaced by $\L_2$ and $(x,x')$
 replaced by $\hat x=(1,n-1),\hat x'=(2,n-1)$, starting from the two
 empty semi-columns obtained by adding to the first two columns of
 $V'$ the empty sites $(1,n), (2,n)$.
\item We iterate until reaching $\o'$.  
\end{enumerate}
It remains to consider the construction of the canonical-paths
appearing in Lemma \ref{A} and for that we use both (1) and (2)
above. 

\begin{figure}
  \centering
\begin{tikzpicture}[>=latex]
[x=1cm, y=1cm, semitransparent]
\draw [help lines] (0,0) grid [step=0.5] (7,3);
\begin{scope}
\draw [thick] (7.5,0) rectangle (14.5,3);
\node at (11,-0.5) {$V_1$};
\draw[line width=4pt] (7.5,0) -- (7.5,3);
\draw[line width=4pt] (8,0) -- (8,3);
\end{scope}
\begin{scope}
\draw [thick] (0,3.5) rectangle (7,6.5);
\node at (2.5,4) {$V_2$};
\draw[line width=4pt] (0,3.5) -- (0,6.5);
\draw[line width=4pt] (0.5,3.5) -- (0.5,6.5);
\end{scope}
\begin{scope}
 \draw [thick]  (0,0) rectangle (7,3);
\node at (3.5,-0.5) {$V$};
\draw[dashed,line width=1.4pt] (3.75,5) ellipse (0.75 and 1.8);
\draw[line width=4pt] (3.5,3.5) -- (3.5,6.5);
\draw[line width=4pt] (4,3.5) -- (4,6.5);
\draw[line width=4pt] (4.5,0) -- (4.5,3);
\draw[line width=4pt] (5,0) -- (5,3);
\draw[dashed,line width=2.5pt] (4,0) -- (4,3);
\draw[line width=4pt] (0,0) -- (0,3);
\draw[line width=4pt] (0.5,0) -- (0.5,3);
\draw [dashed,thick,->] (0.6,5)--(3.3,5);
\draw [dashed,thick,->] (7,1.5)--(5,1.5);
\draw [dashed,thick,->] (5,0.7)--(4.1,0.7);
\draw[dashed, thick,->] (3.8,1.5)--(0.8,1.5);
\node at (4.2,3.2) {$z$}; 
\node at (4,3) {\textbullet}; 
\node at (4.5,-0.5) {$x$};
\end{scope}
\end{tikzpicture}
  \caption{A sketch of the canonical-path $\G_{\o,\o'}$ appearing in Lemma \ref{A}. Assuming that the path has been able to empty the two black columns of $V$, then it is possible to move these two columns one step further to the left as follows. First move the initial pair of double empty columns in $V_2$ to the new position encircled by the dashed ellipse, then, starting with the vertex $z$, empty the dashed black column in $V$ and finally restore the original values of $\o$ to the right of $x$ and then in $V_2$. 
  }
\label{fig:3}
\end{figure}
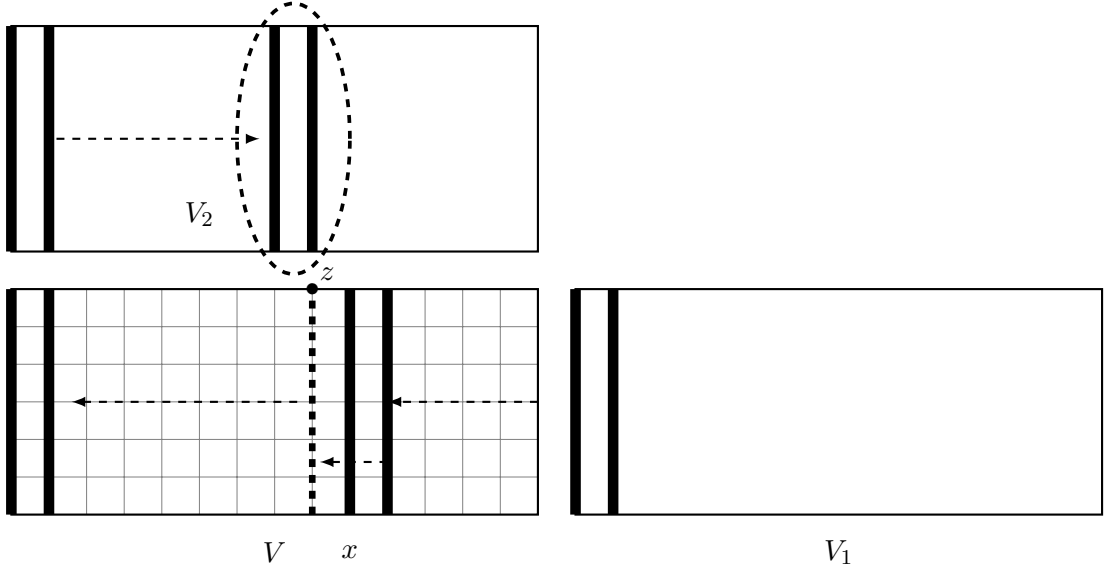
Fix $\o$ such that $V_1:=V+(n_1+1)\vec e_1$ and
$V_2:=V+(n_2+1)\vec e_2$ are super-good, let $z\in V$ and
let $\o'=\o^z$. For simplicity and w.l.o.g. we assume $z=(1,1)$.
We can then
obtain a legal canonical-path between $\o$ and $\o'$ with the required
properties as follows:
\begin{enumerate}[(a)]
\item by combining observation (1) with Lemma \ref{FA1f} we first empty the
  last two columns of $V_2$ without doing any flip inside $V\cup V_1$;
\item at this stage the last two columns of $V_2$ are empty because of (a) and the first two columns of $V_1$ are also empty because $V_1$ was super-good. Thus, using observation (2), we empty the last two columns of $V$;
\item finally we restore the original configuration in $V_2$ by reverting the
  path in the first step.
\item We repeat the above three  steps with a twist: we first empty
  the $4^{th}$  and $3^{rd}$ last column of $V_2$, then the $4^{th}$
  and $3^{rd}$ last column
  of $V$. We then restore the original configuration $\o$ in the last
  two columns of $V$ and, subsequently, we finally restore $\o$  in $V_2$. We have now reached
  the intermediate configuration obtained from $\o$ by emptying the $4^{th}$ and $3^{rd}$ last column of $V$. 
\item We iterate the above step until reaching the configuration obtained from $\o$
  by emptying the $2^{nd}$ and
  $3^{rd}$ column of $V$.
\item Finally, using again (2) above and Lemma \ref{FA1f}, we empty
  the vertex $(1,2)$. At this stage we can do a flip in the corner
  $(1,1)$ since the vertices $(1,2),(2,1)$ and $(3,1)$ are all empty. 
\item The final step is to retrace the steps of the path which emptied
  $(1,2)$ and then those of the path which emptied the $2^{nd}$ and $3^{rd}$ column of $V$ in such a way that we end up in the
  configuration $\o'$.       
\end{enumerate}

\end{proof}
\appendix
\section{Proof of Lemma \ref{lem:Paris}}
\noindent
Fix $\o\in \O$ and let $\tau_{BP}(\omega;x)$ denote the bootstrap
infection time of a generic site $x$ when the initial set of infected sites coincides
with the set of empty sites of
$\omega$. Given a sequence $\{x_i\}_{i=1}^n$ of vertices 
 of $\bbZ^d,$ we set  $\o^{(0)}=\omega$ and for any $i\in[n]$ we denote 
 by $\o^{(i)}$  the configuration
  obtained from $\o$ by setting equal to $0$  all the variables at $x_1,\dots,x_{i}.$ 
 We then say that $\{x_i\}_{i=1}^n$ is a {\sl sequence of legal infections}
 if, for all $i\in [n]$,
 \begin{enumerate}[(i)]
\item $\o_{x_i}=1$;
\item  there exists $U_i\in\mathcal U$ s.t. $\omega^{(i-1)}_y=0$ for all $y\in x_i+U_i$.
\end{enumerate}
Notice that, necessarily, $\t_{BP}(\o,x_j) \le j$ for $j\in [n]$.

The key \emph{reduction property} of a sequence of legal infections is that 
it is possible to extract a subsequence
$\{x_{i_k}\}_{k=1}^{m}$  of length $m=\tau_{BP}(\omega,x_n)$
such that $i_{m}=n$ and
\[
|x_{i_j}-x_{i_{j+1}}|\le r \quad \forall j\in [m-1],
\] 
with $r=\max_{X\in \cU}\max_{x\in X} |x|$.
Indeed, let 
 $t_i:=\t_{BP}(\o,x_i)$. If $t_n=1$ there is nothing to be proved. If $t_n>1$, then
 necessarily there exists $j<n$ such that 
\begin{itemize} 
\item $x_j$ belongs to one of the update rules of $x_n$
  and thus $|x_j-x_n|\leq r$, 
\item $t_j\geq  t_n-1$,
\end{itemize}
since otherwise $\t_{BP}(\o,x_n)<t_n$. Let $j_*$ be the largest
one such integers and set $i_{m-1}=j_*$. Then, using that
$\{x_i\}_{i=1}^{j_*}$ is also a sequence of legal infections, we can repeat the argument and proceed backward until identifying the claimed 
subsequence $\{x_{i_k}\}_{k=1}^m$.


Let now $t\equiv \t_{BP}(\o,0)$ and fix $\d\in (0,1)$. In the sequel
it will be useful to think of the KCM dynamics as built according to the
standard graphical construction of an interacting particle system with
a Glauber dynamics (see e.g. \cite{CMRT}). In this setting, suppose that for any $r$-path $\g$ of length
$t$ ending at the origin, \ie a sequence of $t$
vertices $(v_1,\dots, v_n)$, with $v_t=0$ and $|v_{i}-v_{i+1}|\le r$
for all $i\in [t-1],$ it is not possible to find a ordered sequence  
$t_1<\dots< t_t$ in $(0,\d t)$ of rings of the Poisson
clocks such that the $i^{th}$-ring occurs
at $v_i$. Using the reduction property of any sequence of legal infections,
we conclude that, deterministically, the KCM dynamics starting from $\o$ cannot infect the origin within time $\d t$. 

Finally we claim that the above assumption is satisfied w.h.p. if
$\d$ is small enough. In fact, for any given $r$-path $\g$ ending at
the origin, the probability that 
there exists an ordered sequence  
$t_1<\dots< t_t$ in $(0,\d t)$ as above, is just the probability that a Poisson random variable of
mean $\d t$ is larger than $t$. Since the number of such paths is
bounded by $e^{c(r)t}$, the claim follows immediately for $\d$ small
enough.

In conclusion, we have proved that there exists $\d>0$ such that, for any $\o$ such that
$\t_{BP}(\o,0)\ge 1$, 
\[
\bbP_\o(\t_0\ge \d \t_{BP}(\o,0))\ge 1/2.
\]
\qed

\section*{Acknowledgments}
We are deeply in debt to R. Morris for several enlightening and
stimulating discussions on bootstrap percolation models. We also acknowledge the
hospitality of our respective departments during several exchange visits and the
organizers of the 2016 Oberwolfach's workshop ``Large Scale Stochastic
Dynamics'' for their hospitality in a stimulating atmosphere.

 \begin{bibdiv}
 \begin{biblist}




\bib{FH}{article}{
  title = {Kinetic Ising Model of the Glass Transition},
  author = {Andersen, Hans C.},
  author = {Fredrickson, Glenn H.},
  journal = {Phys. Rev. Lett.},
  volume = {53},
  number = {13},
  pages = {1244--1247},
  date = {1984},
}

\bib{BBD-CM}{article}{
author = {Balogh, J{\'o}zsef},
author = {Bollobas, Bela},
author = {Duminil-Copin, Hugo},
author = {Morris, Robert},
title = {{The sharp threshold for bootstrap percolation in all dimensions}},
journal = {Transactions of the American Mathematical Society},
year = {2012},
volume = {364},
number = {5},
pages = {2667--2701}
}

\bib{BD-CMS}{article}{
author = {Bollobas, Bela},
author = { Duminil-Copin, Hugo },
author = { Morris, Robert},
author = {Smith, Paul},
title = {{Universality of two-dimensional critical cellular automata}},
year = {2014},
eprint = {1406.6680},
journal = {arXiv.org},
}

\bib{BSU}{article}{
title={Monotone cellular automata in a random environment},
author={Bollobas, B.},
author={Smith, P.},
author = {Uzzell, A.},
journal={Combin.Probab.Comput.},
volume={24},
year={2015},
number={4},
pages={687--722},
}

\bib{BBPS}{article}{
author={Balister, P.},
author={Bollobas, B.},
author={Przykucki M.J.},
author={Smith,P.},
journal={Trans.Amer.Math.Soc.},
title={Subcritical U--bootstrap percolation models have non--trivial phase transitions},
pages={7385--7411},
volume={368},
year={2016},
}

\bib{Berthier-Biroli}{article}{
author={Berthier L.},
author={Biroli G.} 
title={Theoretical perspective on the glass transition and amorphous materials},
journal={Rev. Mod. Phys.},
number={ 83}
pages={587 -645},
year={2011},
}

\bib{BCMS-Duarte}{article}{
	Author = {Bollobas, Bela},
author= {Duminil-Copin, Hugo},
author= {Morris, Robert},
author= {Smith, Paul},
	eprint = {arXiv.org:1603.05237},
		Title = {{The sharp threshold for the Duarte model}},
	Year = {2016}}

\bib{CerfManzo}{article}{
author = {Cerf, R.},
author ={Manzo, F.},
title = {{The threshold regime of finite volume bootstrap percolation}},
journal = {Stochastic Processes and their Applications},
number={ 101}
pages={69 -82},
year = {2002}
}

\bib{Chayes}{article}{
title = {{Percolation and random media}},
author={Chayes, J.T.},
author={Chayes, L.},
 conference={
      title={Critical phenomena, Random systems, Gauge theories},
   },
book={
      series={NATO Advanced Study Institute, Les Houches, Session 43, (K. Osterwalder and R. Stora, eds.)},
      publisher={Elsevier},
      place={Amsterdam},
   }, 
year ={1984},
}





\bib{Tree1}{article}{
author = {Cancrini, N},
author= {Martinelli, F},
author={Roberto, C},
author= {Toninelli, C},
title = {{Mixing time of a kinetically constrained spin model on trees: power law scaling at criticality}},
journal = {Probability Theory and Related Fields},
year = {2015},
volume = {161},
number = {1-2},
pages = {247--266},
}

\bib{Praga}{article}{
   author={Cancrini, N.},
   author={Martinelli, F.},
   author={Roberto, C.},
   author={Toninelli, C.},
   title={Facilitated spin models: recent and new results},
   conference={
      title={Methods of contemporary mathematical statistical physics},
   },
   book={
      series={Lecture Notes in Math.},
      volume={1970},
      publisher={Springer},
      place={Berlin},
   },
   date={2009},
   pages={307--340},
   review={\MR{2581609 (2010m:82063)}},
}


\bib{CMRT}{article}{
      author={Cancrini, N.},
      author={Martinelli, F.},
      author={Roberto, C.},
      author={Toninelli, C.},
       title={Kinetically constrained spin models},
        date={2008},
     journal={Probab. Theory Rel.},
      volume={140},
      number={3-4},
       pages={459\ndash 504},
  url={http://www.ams.org/mathscinet/search/publications.html?pg1=MR&s1=MR2365481},
}



\bib{CFM}{article}{
      author={Chleboun, Paul},
      author={Faggionato, Alessandra},
      author={Martinelli, Fabio},
       title={{Time scale separation and dynamic heterogeneity in the low
  temperature East model}},
       year ={2014},
     journal={Commun. Math. Phys. },
     volume={328},
       pages={955-993},
}


\bib{CFM2}{article}{
  author={Chleboun, Paul},
author ={Faggionato, Alessandra},
author={Martinelli, Fabio},
  title={Relaxation to equilibrium of generalised East processes on $Z^d$: Renormalisation group analysis and energy-entropy competition},
  journal={Annals of Probability, to appear},
  year={2014},
}

\bib{Duarte}{article}{
	Author = {Duarte, JAMS},
	Journal = {Physica A. Statistical Mechanics and its Applications},
	Number = {3},
	Pages = {1075--1079},
	Title = {{Simulation of a cellular automat with an oriented bootstrap rule}},
	Volume = {157},
	Year = {1989}}

\bib{DC-Enter}{article}{
author = {Duminil Copin, Hugo},
author={van Enter, Arnout},
title = {{Sharp metastability threshold for an anisotropic bootstrap percolation model.}},
journal = {Annals of Probability},
year = {2013},
volume = {41},
number = {3A},
pages = {1218--1242},
}

\bib{DPEH}{article}{
author = {Duminil Copin, Hugo},
author={van Enter, Arnout},
author = {Hulshof, Tim},
title={{Higher order corrections for anisotropic bootstrap percolation}},
journal = {arXiv.org},
year = {2016},
eprint = {1611.03294v1},
}

\bib{GarrahanSollichToninelli}{article}{
      author={Garrahan, J.P.},
      author={Sollich, P.},
      author={Toninelli, C.},
       title={Kinetically constrained models},
        date={2011},
     journal={in "Dynamical heterogeneities in glasses, colloids, and granular
  media", Oxford Univ. Press, Eds.: L. Berthier, G. Biroli, J-P Bouchaud, L.
  Cipelletti and W. van Saarloos. Preprint arXiv:1009.6113},
}





\bib{Grimmett}{book}{
author = {Grimmett, G R},
title = {{Percolation}},
publisher={Springer Verlag},
series={Grundlehren der mathematischen Wissenschaften},
volume={321},
place={Berlin-Heidelberg},
year = {1999},
isbn ={978-3-540-64902-1}
}


\bib{GG}{book}{
author = {Gravner, J},
author= {Griffeath, D},
title = {	Scaling laws for a class of critical cellular automaton growth rules},
publisher = {Random walks (Budapest)},
year = {1999}
}

\bib{GG2}{article}{
author = {Gravner, J},
author= {Griffeath, D},
title = {First passage time for threshold growth dynamics on {$\bbZ^2$}},
journal={Annals of Probability},
volume={24},
pages={1752-1778},
year = {1996}
}

\bib{Holroyd}{article}{
author = {Holroyd, Alexander E},
title = {{Sharp metastability threshold for two-dimensional bootstrap percolation}},
journal = {Probability Theory and Related Fields},
year = {2003},
volume = {125},
number = {2},
pages = {195--224}
}






\bib{Liggett1}{book}{
      author={Liggett, T.M.},
       title={Interacting particle systems},
      series={Grundlehren der Mathematischen Wissenschaften [Fundamental
  Principles of Mathematical Sciences]},
   publisher={Springer-Verlag},
     address={New York},
        date={1985},
      volume={276},
        ISBN={0-387-96069-4},
}

\bib{Tree2}{article}{
author = {Martinelli, F},
author={Toninelli, C},
title = {{Kinetically constrained spin models on trees}},
journal = {Annals of Applied Probability},
year = {2013},
volume = {23},
number = {5},
pages = {1967--1987},
}

\bib{MMT2}{article}{
author = {{Mar{\^e}ch\'e}, Laure},
author={Martinelli,Fabio},
author={Toninelli, Cristina},
title={Energy barriers and the infection time for the kinetically
  constrained Duarte model},
year={in preparation}, 
}

\bib{MMT}{article}{
author={Martinelli,Fabio},
author = {Morris, Robert},
author={Toninelli, Cristina},
title={On the persistence time of kinetically constrained models:
  universality in two dimensions},
year={in preparation}, 
}

\bib{Mountford}{article}{
	Author = {Mountford, T S},
	Journal = {Stochastic Processes and their Applications},
	Number = {2},
	Pages = {185--205},
	Title = {{Critical length for semi-oriented bootstrap percolation}},
	Volume = {56},
	Year = {1995}}

\bib{Robsurvey}{article}{
author = {Morris, Robert},
title = {{Bootstrap percolation, and other automata 
}},
journal={European Journal of Combinatorics},
year = {to appear},
}




\bib{Peres-Sly}{article}{
      author={Peres, Yuval},
      author={Sly, Allan},
       title={{Mixing of the upper triangular matrix walk}},
        date={2012},
          journal={Probab. Theory Rel.},
volume={156},
number={3-4},
pages={581\ndash 591},
volume={math.PR},
}



\bib{Saloff}{book}{
      author={Saloff-Coste, Laurent},
      editor={Bernard, Pierre},
       title={Lectures on finite {M}arkov chains},
      series={Lecture Notes in Mathematics},
   publisher={Springer Berlin Heidelberg},
        date={1997},
      volume={1665},
        ISBN={978-3-540-63190-3},
         url={http://dx.doi.org/10.1007/BFb0092621},
}





 \end{biblist}
 \end{bibdiv}

\end{document}